\newcommand{\Aut}{\operatorname{Aut}}
\newcommand{\Hom}{\operatorname{Hom}}
\newcommand{\EE}{{\mathbb E}}
\newtheorem{thm}{Theorem}[section]
\newtheorem{lem}[thm]{Lemma}
\newtheorem{cor}[thm]{Corollary}
\newtheorem{prop}[thm]{Proposition}
\newtheorem{claim}{Claim}
\newtheorem*{claim*}{Claim}
\theoremstyle{definition}
\newtheorem{defn}[thm]{Definition}
\newtheorem{question}[thm]{Question}
\newtheorem{conj}[thm]{Conjecture}
\theoremstyle{remark}
\newtheorem{rem}[thm]{Remark}
\newcommand*\fancyrefthmlabelprefix{thm}\frefformat{plain}{\fancyrefthmlabelprefix}{Theorem~#1}
\newcommand*\fancyreflemlabelprefix{lem}\frefformat{plain}{\fancyreflemlabelprefix}{Lemma~#1}
\newcommand*\fancyrefproplabelprefix{prop}\frefformat{plain}{\fancyrefproplabelprefix}{Proposition~#1}
\newcommand*\fancyrefcorlabelprefix{cor}\frefformat{plain}{\fancyrefcorlabelprefix}{Corollary~#1}
\newcommand*\fancyrefclaimlabelprefix{claim}\frefformat{plain}{\fancyrefclaimlabelprefix}{Claim~#1}
\newcommand*\fancyreffactlabelprefix{fact}\frefformat{plain}{\fancyreffactlabelprefix}{Fact~#1}
\newcommand*\fancyrefquestionlabelprefix{question}\frefformat{plain}{\fancyrefquestionlabelprefix}{Question~#1}
\newcommand*\fancyrefconjlabelprefix{conj}\frefformat{plain}{\fancyrefconjlabelprefix}{Conjecture~#1}
\newcommand*\fancyrefdefnlabelprefix{defn}\frefformat{plain}{\fancyrefdefnlabelprefix}{Definition~#1}
\newcommand*\fancyrefconstlabelprefix{const}\frefformat{plain}{\fancyrefconstlabelprefix}{Construction~#1}
\newcommand*\fancyrefsetuplabelprefix{setup}\frefformat{plain}{\fancyrefsetuplabelprefix}{Setup~#1}
\newcommand*\fancyrefexlabelprefix{ex}\frefformat{plain}{\fancyrefexlabelprefix}{Example~#1}
\newcommand*\fancyrefremlabelprefix{rem}\frefformat{plain}{\fancyrefremlabelprefix}{Remark~#1}
\newcommand*\fancyrefitemlabelprefix{item}\frefformat{plain}{\fancyrefitemlabelprefix}{(#1)}
\def\repeat#1#2 {\expandafter\gdef\csname B#1\endcsname {\mathbb{#1}}
  \ifthenelse{\equal{#2}{*}}{}{\repeat #2 }}
\def\repeat#1#2 {\expandafter\gdef\csname rm#1\endcsname {\mathrm{#1}}
  \ifthenelse{\equal{#2}{*}}{}{\repeat #2 }}
\def\repeat#1#2 {\expandafter\gdef\csname C#1\endcsname {\mathcal{#1}}
  \ifthenelse{\equal{#2}{*}}{}{\repeat #2 }}
\def\repeat#1#2 {\expandafter\gdef\csname bf#1\endcsname {\boldsymbol{#1}}
  \ifthenelse{\equal{#2}{*}}{}{\repeat #2 }}
\def\repeat#1#2 {\expandafter\gdef\csname scr#1\endcsname {\mathscr{#1}}
  \ifthenelse{\equal{#2}{*}}{}{\repeat #2 }}
\DeclareMathOperator{\gr}{gr}
\DeclareMathOperator{\Homeo}{Homeo}
\DeclareMathOperator{\rank}{rk}
\DeclareMathOperator{\Stab}{Stab}
\DeclareMathOperator{\Tor}{Tor}
\newcommand{\HomSheaf}{\ensuremath{\underline{\CH\kern-1pt om}}}
\newcommand\gen[1]{{\ensuremath\langle #1\rangle}} 
\newcommand{\dcm}[1][d]{$#1$-$\mathfrak{cm}_p$\xspace} 
\newcommand\Orientation[1][]{\ifthenelse{\equal{#1}{}}{\CO_{p}}{\CO_{p,#1}}}
\newcommand{\cm}{$\mathfrak{cm}_p$} 
\newcommand{\Gcm}{$G\!\curvearrowright\!\mathfrak{cm}_p$} 
\newcommand{\Gdcm}{$G\!\curvearrowright\! d$-$\mathfrak{cm}_p$} 
\newcommand\Nth[1][N] {{{}^{(#1)}\kern-2pt}}
\newcommand{\Fix}[2]{{#1}^{#2}}
\newcommand{\isom}{\cong}
\begin{document}
\title[Number of stabilizers]
{On the number of stabilizer subgroups in a finite group acting on a manifold}
\author[B. Csik\'os \and I. Mundet i Riera \and
L. Pyber \and E. Szab\'o]{Bal\'azs Csik\'os \and Ignasi Mundet i Riera \and
	L\'aszl\'o Pyber \and Endre Szab\'o}
\maketitle
\begin{abstract}
  If a finite $p$-group $G$ acts continuously on a compact topological
  manifold $M$ then, with some bound $C$ depending on $M$ alone,
  $G$ has a subgroup $H$ of index at most $C$ such that the $H$-action
  on $M$ has at most $C$ stabilizer subgroups.
  This result plays a crucial role in the proof of a deep conjecture of Ghys
  \cite{Ghys-conjecture}.
\end{abstract}
\section{Introduction}
\label{sec:introduction}

It is a basic question how the topology of a manifold restricts the
algebraic and geometric complexity of finite group actions on the
manifold.
The following deep conjecture of Ghys points in this direction.
\begin{conj}
  For each compact differentiable manifold $M$ there is a constant $I$
  such that every finite subgroup $G$ of the diffeomorphism group of $M$
  has a nilpotent subgroup of index at most $I$.
\end{conj}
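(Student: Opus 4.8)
\emph{Overall approach and first reduction.}
The plan is to deduce the conjecture from the stabilizer-counting theorem stated in the abstract, which supplies the essential geometric input, combined with Minkowski's bound on finite subgroups of $\GL_N(\ZZ)$, Smith theory, Bochner's smooth linearization theorem, and some finite group theory; write $n=\dim M$. For the first reduction, a finite subgroup $G$ of the diffeomorphism group of $M$ acts on the finitely generated abelian group $H^*(M;\ZZ)$, and the automorphism group of any finitely generated abelian group has finite subgroups of bounded order (Minkowski); hence the kernel $G_1$ of this action has index bounded in terms of $M$ alone, and we may replace $G$ by $G_1$. From now on $G$ acts trivially on $H^*(M;\ZZ)$, which is what makes Smith theory effective below.

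\emph{Reduction to a uniform statement about $p$-subgroups.}
A finite group has a nilpotent subgroup of index at most $C$ exactly when its Fitting subgroup $F(G)$ has index bounded in terms of $C$: the normal core of a nilpotent subgroup of index $C$ is normal, nilpotent (a subgroup of a nilpotent group), and of index at most $C!$, hence lies in $F(G)$. So it suffices to bound $[G:F(G)]$. Passing through the generalized Fitting subgroup and, presumably, the classification of finite simple groups, bounding $[G:F(G)]$ for all finite $G$ reduces, after one further descent to a bounded-index subgroup, to controlling the $p$-subgroups of the diffeomorphism group of $M$ \emph{by constants independent of the prime $p$}: concretely, one wants (i) a $p$-uniform bound on their $p$-rank, and (ii) a $p$-uniform bound on the index of a subgroup whose action on $M$ linearizes near a fixed point. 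The independence from $p$ is the crux: it is what allows the per-prime local information to be assembled into one genuine nilpotent subgroup whose index does not degrade as $|G|$ acquires more prime divisors.

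\emph{Analysis of a $p$-group action.}
Let $P$ be a $p$-subgroup of the diffeomorphism group of $M$, acting trivially on $H^*(M;\ZZ)$. By the stabilizer-counting theorem there is $H\le P$ with $[P:H]\le C=C(M)$ whose action on $M$ has at most $C$ stabilizer subgroups. The stabilizers of a $p$-group action are closed under intersection, so with only $C$ of them an orbit-counting argument, followed by a bounded-index descent, produces a normal subgroup $H'\trianglelefteq H$ of index bounded in $C$ and a normal subgroup $K\trianglelefteq H'$ such that $H'/K$ acts on the compact submanifold $\Fix{M}{K}$ with trivial stabilizer on an open dense subset. Smith theory then bounds the $p$-rank of $H'/K$, uniformly in $p$, giving (i); and at any point of $\Fix{M}{K}$, Bochner's theorem --- this is where the \emph{smoothness} hypothesis of the conjecture is used --- identifies the $K$-action locally with a linear action on $\RR^{n}$, so $K$ embeds into $\GL_n(\RR)$ and, by Jordan's classical theorem, has an abelian subgroup of index bounded in $n$ only, giving (ii). Feeding (i) and (ii) into the previous reduction yields, for every finite $G$, a nilpotent subgroup of index at most a constant $I=I(M)$, which is the conjecture.

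\emph{The main obstacle.}
The geometric core of the argument is precisely the stabilizer-counting theorem, which is taken as given, together with classical Smith theory and Bochner linearization. The part I expect to be genuinely hard is the reduction of the second paragraph, and within it the requirement that every constant be independent of $p$: turning ``few stabilizers'' and ``bounded $p$-rank'', established separately for each prime, into the single assertion $[G:F(G)]\le I$ for an arbitrary finite group demands a careful study of how the Sylow subgroups for distinct primes fit together inside the diffeomorphism group of $M$ --- exactly the interaction that makes this group fail to be Jordan in general --- and it is here that the classification of finite simple groups and the most delicate counting estimates are likely to be needed.
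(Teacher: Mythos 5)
This statement is a conjecture that the paper itself does not prove: the introduction explicitly attributes its proof to \cite{Ghys-conjecture}, and the present paper only supplies \fref{thm:for-p-groups,topological-manifolds} (and a variant of \fref{cor:free-part-has-bounded-homology}) as technical tools used there. So there is no in-paper argument to compare yours against, and your proposal must stand on its own. It does not: it is a strategy outline in which the decisive step is asserted rather than proved, and as formulated that step is actually false.

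Concretely, your first reduction (Minkowski, acting on $H^*(M;\ZZ)$) and the equivalence between having a nilpotent subgroup of bounded index and bounding $[G:F(G)]$ are fine. The gap is the claim that bounding $[G:F(G)]$ for all finite $G\le\operatorname{Diff}(M)$ "reduces" to the two $p$-uniform statements (i) (bounded $p$-rank) and (ii) (bounded-index linearizable/abelian subgroups of the $p$-subgroups). Take $G=\mathrm{PSL}_2(p)$: all of its Sylow subgroups are cyclic, elementary abelian of rank at most $2$, or dihedral, so (i) holds with the absolute constant $2$ and (ii) holds trivially (each Sylow has an abelian subgroup of index at most $2$); yet $F(\mathrm{PSL}_2(p))=\{1\}$ and its largest nilpotent subgroups have index growing with $p$. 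Hence (i) and (ii) cannot imply the conjecture; one must in addition exclude, for each fixed $M$, effective actions of simple groups such as $\mathrm{PSL}_2(p)$ for all large $p$, and that exclusion is itself a substantial geometric theorem requiring its own argument (this is where the real content of \cite{Ghys-conjecture} lies). Separately, in your third paragraph the production of $K\trianglelefteq H'$ with $H'/K$ acting almost freely on $\Fix{M}{K}$ from the mere finiteness of $\Stab(H,M)$ is asserted without construction (note also that the set of stabilizers of an action is not in general closed under intersection). What you have written is a plausible research plan whose hardest components you yourself flag as unproved; it is not a proof.
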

Originally, about twenty years ago,
Ghys conjectured something stronger:
the group $G$ should contain an abelian subgroup of bounded index.
That was disproved in \cite{Ghys-conjecture-false},
so Ghys modified the conjecture to the one above.
The conjecture (in a more general form, for the homeomorphism group of
topological manifolds)
has been proved recently
in \cite{Ghys-conjecture}. One of the main technical tools used in
\cite{Ghys-conjecture}
is a weaker version of
\fref{thm:for-p-groups,topological-manifolds} below.
\fref{thm:for-p-groups,topological-manifolds} is likely to have a
number of further applications,
see \cite{Conjugacy-classes-of-stabilizers} and \cite{Jordan-homeo}
discussed below.

Classical Smith theory gives strong restrictions on the topology of
fixed point submanifolds,
and Borel's Fixed Point Formula describes locally the possible
ways how these submanifolds can meet at any single point.
To complete this picture,
our paper focuses on bounds on the number of stabilizer subgroups.
As Smith theory and Borel's formula, our results apply
only to actions of finite $p$-groups.

\begin{defn}
  Let $G$ be a group acting on a set $X$.
  Denote by $G_x$ the  stabilizer subgroup
  of an element $x\in X$.
  For subgroups $H\le G$, we denote by $X^H$ the
  fixed point subset of $H$.
  We will study the set of all stabilizer subgroups:
  $$
  \Stab(G,X) = \left\{G_x\,\big|\,x\in X\right\}.
  $$
\end{defn}

Our main theorem is the following.

\begin{thm} \label{thm:for-p-groups,topological-manifolds}
  Let $M$ be a topological manifold such that
  $H_*(M;\BZ)$ is finitely generated (as an abelian group).
  Then there is a number $C$
  depending on $\dim(M)$ and $H_*(M;\BZ)$ with the following property.
  For every prime $p$, every finite $p$-group $G$ acting continuously
  on $M$
  has a characteristic subgroup $H\le G$ of index at most $C$
  containing the center of $G$
  such that
  $$
  \big|\Stab(H,M)\big|\le C.
  $$
\end{thm}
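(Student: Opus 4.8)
The plan is to prove, by induction on $n=\dim M$, a quantitative statement in the category of $\FF_p$-cohomology manifolds: for all $n,b$ there is $C(n,b)$ so that every finite $p$-group $G$ acting continuously on a connected $\FF_p$-cohomology $n$-manifold $M$ with $\dim_{\FF_p}H^*(M;\FF_p)\le b$ has a subgroup $H\le G$ of index $\le C(n,b)$ with $|\Stab(H,M)|\le C(n,b)$. The stated theorem follows after routine reductions: replacing $G$ by its quotient by the kernel of the action changes $\Stab$ by a bijection; the kernel of the action of $G$ on the finitely many components of $M$ has bounded index and reduces us to connected $M$, since $\Stab(H,M)=\bigcup_i\Stab(H,M_i)$ once $H$ preserves each component $M_i$; when $H_*(M;\BZ)$ is finitely generated one has $\sup_p\dim_{\FF_p}H^*(M;\FF_p)<\infty$, so a single constant suffices for all $p$; and a bounded-index subgroup is turned into a characteristic one containing $Z(G)$ at the cost of a worse (still bounded) index, with some care — here the Mann--Su bound on the $p$-rank of $G$, uniform in $p$, is what keeps the relevant subgroup lattices finite and controlled. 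The base case $n=0$ is trivial: $M$ is then at most $b$ points.

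For the inductive step we may assume the action is effective, so by Smith theory every fixed set $M^K$ of a nontrivial $p$-subgroup $K\le G$ is an $\FF_p$-cohomology manifold with at most $b$ connected components, each of dimension $\le n-1$ and of total mod-$p$ Betti number $\le b$. The organizing object is the set $\mathcal M$ of those connected components of the various $M^K$ ($1\neq K\le G$) that are maximal with respect to inclusion — they exist and absorb everything from below, because dimensions strictly increase along a chain of strictly nested fixed components. The finite group $G$ permutes $\mathcal M$, and the kernel $G_0$ of this permutation action has index $\le|\mathcal M|!$; for each $F\in\mathcal M$ the group $G_0$ acts on the connected $\FF_p$-cohomology manifold $F$ of dimension $<n$, so by the inductive hypothesis there is $H_F\le G_0$ of bounded index with $|\Stab(H_F,F)|$ bounded, and we put $H=\bigcap_{F\in\mathcal M}H_F$. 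The key point is that every nontrivial stabilizer $H_x$ is visible inside some such $F$: if $H_x\neq1$, pick $g\in H_x$ of order $p$, so $x\in M^g$, hence $x$ lies in the maximal fixed component $F\in\mathcal M$ containing the component of $M^g$ through $x$; since $H$ preserves $F$, the subgroup $H_x$ belongs to $\Stab(H,F)$, which has at most $|\Stab(H_F,F)|$ elements. Hence $|\Stab(H,M)|\le 1+|\mathcal M|\cdot\max_{F}|\Stab(H_F,F)|$, closing the induction provided $|\mathcal M|$ is bounded in terms of $n$ and $b$ alone.

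Bounding $|\mathcal M|$ is the crux, and it is where Borel's Fixed Point Formula is essential. The local input is that at any point $x$ the number of distinct germs at $x$ of fixed sets $M^K$ ($K\le G_x$) is bounded by a function of $n$: for a rank-two elementary abelian $A\le G_x$ fixing $x$, Borel's identity $\operatorname{codim}_x M^A=\sum_H\operatorname{codim}_{M^H_{(x)}}M^A_{(x)}$ (sum over the index-$p$ subgroups $H\le A$) forces all but at most $n$ of these roughly $p$ subgroups to have the same fixed-set germ at $x$ as $A$ itself; feeding this into an induction on the rank of $A$, and using Smith theory to reduce arbitrary subgroups of $G_x$ to elementary abelian ones, bounds the number of germs — in particular the number of $F\in\mathcal M$ through any given point. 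The main obstacle is to upgrade this pointwise bound to a global bound on $|\mathcal M|$ depending only on $n$ and $b$: one must show, for each $d\le n-1$, that the union of the $d$-dimensional members of $\mathcal M$ is an $\FF_p$-cohomology $d$-manifold whose mod-$p$ cohomology is bounded in terms of $b$, so that it has boundedly many components; this requires a delicate Mayer--Vietoris and Smith-theoretic analysis of the stratified singular set $\bigcup_{\operatorname{ord}(g)=p}M^g$. I expect this globalization, rather than the homological bookkeeping around it, to be the technical heart of the argument; the special case of $G$ elementary abelian — where no passage to a proper subgroup is available and $|\Stab(G,M)|$ itself must be bounded — is essentially the statement that Borel's formula controls the whole arrangement of fixed loci of the subgroups of $(\BZ/p)^{k}$.
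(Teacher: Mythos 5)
Your reduction scheme (pass to connected components, to the kernel of the permutation action on a family of fixed\mbox{-}point components, and then use Smith theory plus induction on dimension to see every nontrivial stabilizer inside a lower\mbox{-}dimensional fixed set) is sound bookkeeping, but the proof has a genuine gap exactly where you flag it: the bound on $|\mathcal M|$, the number of maximal fixed\mbox{-}point components. This is not a technical globalization step that can be deferred --- it \emph{is} the theorem. Already for $E\isom\BZ_p^2$ there are $p+1$ subgroups of order $p$, each of which could a priori contribute a distinct maximal fixed component, so the bound must be uniform in $p$; nothing in your sketch produces that. Borel's formula at a point bounds the number of index-$p$ subgroups whose fixed germ at $x$ differs from that of $E$, but it says nothing about subgroups whose fixed sets avoid $x$ entirely, and your proposed Mayer--Vietoris analysis of $\bigcup_{\mathrm{ord}(g)=p}M^g$ is circular: to bound the cohomology of that union you already need to know how many pieces it has. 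The paper's mechanism is entirely different. For an elementary abelian $E$ whose point stabilizers are all trivial or $\BZ_p$, it counts the distinct stabilizers $L_1,\dots,L_k$ by showing each disjoint piece $M^{L_i}$ contributes a \emph{nonzero} direct summand to the equivariant cohomology $H^{d+1}_{E,c}(M;\Orientation)$ in degree $d+1$ (above the dimension, where the free part contributes nothing), while the Leray--Grothendieck spectral sequence bounds that group from above by $B\sum_{i\le d+1}\dim H^i(E;\BF_p)$; higher-rank stabilizers are then handled by an induction on $\rank E$ using a family of hyperplanes $H_1,\dots,H_r$ with trivial intersection, each rank-$\ge 2$ stabilizer being reconstructed from its intersections with two of them. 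Borel's formula appears in the paper only in the proof of the rank bound (the Mann--Su generalization), not in counting stabilizers.

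Two further points. First, for general (non-abelian) $p$-groups the paper does not run your ``maximal components'' induction at all: it proves group-theoretically (upper central series for $p$ large, Alperin's theorem on maximal normal abelian subgroups for $p$ small) that after passing to a bounded-index subgroup every nontrivial stabilizer meets the center, and then induction on $d+B$ applied to the fixed sets of central elements of order $p$ finishes the argument; your scheme would need the inductive hypothesis on each $F\in\mathcal M$ for the possibly non-abelian group $G_0$, which is fine in itself but does not remove the need to bound $|\mathcal M|$. Second, the requirement that $H$ be characteristic and contain $\CZ(G)$ is not a routine afterthought: your $G_0$ and $\bigcap_F H_F$ need not contain $\CZ(G)$, and enlarging a good subgroup by the center can destroy the stabilizer bound. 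The paper arranges this from the start by building the induction around fixed sets of \emph{central} elements, so that every subgroup produced contains $\CZ(G)$ by construction, and then takes intersections of all subgroups of a given bounded index containing $\CZ(G)$ (finitely many by the rank bound and Lubotzky--Segal) to force the subgroup to be characteristic.
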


\begin{rem}
  \label{rem:dihedral}
  In this theorem, it is necessary to consider a subgroup $H$. Indeed, let $p$ be a fixed prime, $n>0$ an arbitrary natural number, \[T^{p-1}=\{(z_1,\dots,z_p)\in \mathbb C^p:z_1\cdots z_p=1, |z_i|=1 \text{ for all }i\}\] a $(p-1)$-dimensional torus. The subgroup $H=\{(z_1,\dots,z_p)\in T^{p-1}: (z_i)^{p^n}=1  \text{ for all }i\}$ of  $T^{p-1}$ acts on $T^{p-1}$ by left translations, the cyclic group $A$ of order $p$ acts both on $T^{p-1}$ and $H$ by cyclic permutations of the coordinates $z_i$. These actions generate an action of the semidirect product  $G=A\ltimes H$ on  $T^{p-1}$. This action has $p^{(p-1)n}$ different stabilizer subgroups, the conjugates of $A$ in $G$, however, each of these intersects the subgroup $H\triangleleft G$ in the trivial group.
\end{rem}

Classical Smith theory allows one to bound the cohomology of fixed point
submanifolds, and their complements (see \fref{prop:Smith-Floyd}).
Combining this with \fref{thm:for-p-groups,topological-manifolds},
we can bound the cohomology of many other interesting submanifolds.
For example, the following corollary, which is used in  \cite{Ghys-conjecture}, provides a technique to reduce a statement on arbitrary actions to the study of free actions.

\begin{cor} \label{cor:free-part-has-bounded-homology}
  Let $M$ be a topological manifold such that
  $H_*(M;\BZ)$ is finitely generated (as an abelian group).
  Then there is a number $\mathring C$
  depending on $\dim(M)$ and $H_*(M;\BZ)$ with the following property.
  For every prime $p$, every finite $p$-group $G$ acting continuously
  on $M$
  has a characteristic subgroup $H\le G$ of index at most
  $\mathring C$
  containing the center of $G$
  such that if $K$ is minimal among the subgroups in $\Stab(H,M)$,
  then the subset $\mathring M^K=\{x\in M\;|\; H_x=K\}$ is open and
  \[
    \dim H_*(\mathring M^K;\BF_p)\le \mathring C.
  \]
\end{cor}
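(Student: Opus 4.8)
The plan is to keep the very subgroup $H\le G$ provided by \fref{thm:for-p-groups,topological-manifolds} — it is characteristic, contains the centre, and has index at most $C$ — and to take $\mathring C$ to be a constant depending only on $\dim M$ and $H_*(M;\BZ)$, at least $C$ and large enough for the two estimates below. Fix a minimal element $K\in\Stab(H,M)$, write $n=\dim M$, and let $\CO=\CO_M$ be the orientation sheaf of $M$, a rank-one local system carrying a compatible $H$-action. Record the decomposition
\[
\mathring M^K \;=\; M^K\setminus N, \qquad N \;=\; \bigcup\bigl\{\,M^{K'}\ \big|\ K'\in\Stab(H,M),\ K'\supsetneq K\,\bigr\},
\]
in which $N$ is a union of at most $|\Stab(H,M)|\le C$ closed sets, each the fixed-point set of a $p$-subgroup of $H$, and every intersection of which is again the fixed-point set of a $p$-subgroup (the one they generate). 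This only makes $\mathring M^K$ open in $M^K$; its openness in $M$ requires the minimality of $K$.

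First I would prove openness. Given $x\in\mathring M^K$, so $H_x=K$, for each $h\in H\setminus K$ we have $h\cdot x\ne x$, so Hausdorffness of $M$ yields an open $V_h\ni x$ with $hV_h\cap V_h=\varnothing$; then $S:=\bigcap_{h\in H\setminus K}V_h$ is a finite intersection, hence an open neighbourhood of $x$, and for every $y\in S$ and $h\in H\setminus K$ one has $hy\ne y$, so $H_y\subseteq K$, whence $H_y=K$ because $H_y\in\Stab(H,M)$ and $K$ is minimal. Thus $S\subseteq\mathring M^K$ and $x$ is interior; since $x$ was arbitrary, $\mathring M^K$ is open in $M$, and in particular it is a topological $n$-manifold whose orientation sheaf is $\CO|_{\mathring M^K}$.

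For the homology bound I would pass through compactly supported cohomology and Poincar\'e--Lefschetz duality. By Smith theory (\fref{prop:Smith-Floyd}), applied with coefficients in the equivariant local system $\CO\otimes\BF_p$, the fixed-point set of any $p$-subgroup $K'\le H$ has $\dim_{\BF_p}H^*(M^{K'};\CO\otimes\BF_p)$ bounded in terms of $\dim_{\BF_p}H^*(M;\CO\otimes\BF_p)$, and Poincar\'e duality on the closed manifold $M$ identifies the latter with $\dim_{\BF_p}H_*(M;\BF_p)$, which is bounded in terms of $H_*(M;\BZ)$ uniformly in $p$. Feeding the $\le C$ fixed-point pieces of $N$ and their mutual intersections into a Mayer--Vietoris (nerve) spectral-sequence argument — carried out with \v{C}ech cohomology so as not to assume $N$ is an ANR — bounds $\dim_{\BF_p}H^*(N;\CO\otimes\BF_p)$ by a constant, depending only on $C$, times $\dim_{\BF_p}H_*(M;\BF_p)$. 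The long exact sequence of the open--closed decomposition $\mathring M^K=M^K\setminus N$ inside the compact space $M^K$,
\[
\cdots \to H^j_c(\mathring M^K;\CO\otimes\BF_p) \to H^j(M^K;\CO\otimes\BF_p) \to H^j(N;\CO\otimes\BF_p) \to \cdots,
\]
then gives $\dim_{\BF_p}H^*_c(\mathring M^K;\CO\otimes\BF_p)\le\dim_{\BF_p}H^*(M^K;\CO\otimes\BF_p)+\dim_{\BF_p}H^*(N;\CO\otimes\BF_p)$, which is bounded as wanted; and Poincar\'e--Lefschetz duality for the open $n$-manifold $\mathring M^K$ yields $H_k(\mathring M^K;\BF_p)\cong H^{n-k}_c(\mathring M^K;\CO\otimes\BF_p)$, so $\dim_{\BF_p}H_*(\mathring M^K;\BF_p)\le\mathring C$.

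The main obstacle, I expect, is the orientation bookkeeping in the last step: for odd $p$ and non-orientable $\mathring M^K$ the twist by $\CO$ is genuinely present and cannot be dropped, while the naïve device of passing to an orientation double cover is fatal, since it turns the $p$-group action into a $p\times 2$-group action, to which Smith theory no longer applies. The point is therefore to run the entire Smith/Mayer--Vietoris estimate with the equivariant coefficient system $\CO\otimes\BF_p$ from the outset rather than with constant $\BF_p$-coefficients; a subsidiary nuisance is that $N$ is not a manifold, which forces the use of \v{C}ech cohomology and a closed-cover Mayer--Vietoris argument.
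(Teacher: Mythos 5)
Your overall strategy is the same as the paper's: write $\mathring M^K=M^K\setminus\bigcup_{L\supsetneq K}M^L$, bound a compactly supported cohomology group via Smith theory and long exact sequences, and convert to singular homology by Poincar\'e duality on the open manifold $\mathring M^K$. Your openness argument (separating $x$ from $hx$ for each $h\in H\setminus K$ and invoking minimality of $K$) is correct and supplies a detail the paper leaves implicit. However, there is a genuine gap: you twice assume compactness that the hypotheses do not grant. The corollary only assumes $H_*(M;\BZ)$ is finitely generated, not that $M$ is closed; so ``Poincar\'e duality on the closed manifold $M$'' is unavailable, $M^K$ need not be compact, and the exact sequence you write, mixing $H^j_c(\mathring M^K;-)$ with ordinary $H^j(M^K;-)$ and $H^j(N;-)$, is simply not exact for non-compact $M^K$. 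The repair is routine but must be made throughout: work exclusively with $H^*_c(-;\Orientation)$, bound $\dim H^*_c(M;\Orientation)$ by $H_*(M;\BZ)$ via the universal coefficient theorem and duality as in \fref{lem:universal_bound_on_mod_p_cohomology}, note that \fref{prop:Smith-Floyd} already bounds $H^*_c$ of fixed-point sets, and use the compactly supported sequence $\cdots\to H^j_c(U)\to H^j_c(X)\to H^j_c(F)\to\cdots$ for the closed subset $F$ with complement $U$.

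Beyond that, your route through $H^*(N)$ differs from the paper's and is heavier than necessary. You propose a closed-cover Mayer--Vietoris/nerve spectral sequence to bound the cohomology of the non-manifold union $N$, correctly observing that pairwise (and higher) intersections of the $M^{K'}$ are again fixed-point sets; this can be made to work in sheaf cohomology with compact supports for a finite closed cover, but it forces you to handle \v{C}ech-type machinery on a space with no manifold structure. The paper's \fref{lem:topological-invariants-of-fixpoint-sets-and-complements} avoids $N$ altogether: it removes one fixed-point set $M^L$ at a time, noting that $U^K_{\CS\cup\{L\}}=U^K_\CS\setminus U^{\gen{K,L}}_\CS$ is the complement of a closed subset whose components are again cohomology manifolds, and applies the closed-subset long exact sequence for $H^*_c$ at each step to get the explicit bound $2^{|\CS|}B$. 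If you adopt that peeling induction, both the compactness issue and the nerve argument disappear, and the rest of your proof (Smith theory input, duality $H_k(\mathring M^K;\BF_p)\isom H^{n-k}_c(\mathring M^K;\Orientation)$ on the open manifold, and the careful retention of the orientation twist for odd $p$) goes through as you describe.
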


\begin{rem}
  If $M$ is connected and the subgroup $H\le G$ given
  by the corollary acts effectively on $M$, then $\{1\}$ is the unique minimal element in
  $\Stab(H,M)$, and $\mathring M^{\{1\}}$ is the largest open subset
  in $M$ where the $H$-action is free.
\end{rem}

\begin{rem} \label{rem:free-part-has-bounded-homology}
  Similar bound holds for all subgroups $K\in\Stab(H,M)$. In general,
  $\mathring M^K$ may not be a manifold, just a disjoint union of cohomology
  manifolds, and we cannot control its singular homology.
  Instead,  we can bound its Borel-Moore homology with compact
  support:
  \[
    \dim H^c_*(\mathring M^K;\BF_p)\le \mathring C.
  \]
\end{rem}

The above results have many important applications.
\fref{thm:for-p-groups,topological-manifolds}
and a variation of
\fref{cor:free-part-has-bounded-homology} play a crucial role in the
proof of the Ghys conjecture \cite{Ghys-conjecture}.
Further applications of \fref{thm:for-p-groups,topological-manifolds}
are presented in \cite{Conjugacy-classes-of-stabilizers}. Finally,
\fref{thm:for-p-groups,topological-manifolds}   is a key ingredient
in the proof of the almost fixed point property for compact topological manifolds
with non-zero Euler characteristic in \cite{Jordan-homeo}.
The latter means that if 
$M$ is a compact manifold with nonzero Euler characteristic then
there exists a constant $C$ with this property: for any continuous action
of a finite group $G$ on $M$ there exists a point $x\in M$ such
that the index of $G_x$ in $G$ is at most $C$. Note that, besides
\fref{thm:for-p-groups,topological-manifolds}, the proof of the previous
result uses the validity of Ghys's original conjecture for
homeomorphism groups of compact manifolds with nonzero Euler characteristic.

Recall that the \emph{rank} of a finite group $G$ is the minimal
integer $r$ such that every subgroup $H$ of $G$ is $r$-generated. A
result of Mann and Su \cite{Mann_Su} gives an upper bound on the rank
of an elementary abelian $p$-group $G$ acting effectively on a compact
manifold $M$ in terms of the dimension and
the $\BF_p$-cohomology of $M$.
In order to prove \fref{thm:for-p-groups,topological-manifolds}
we generalize this result to not necessarily compact manifolds
and to arbitrary finite groups $G$.
\begin{thm}\label{thm:Mann-Su_v0}
	Let $G$ be  a finite group  acting continuously
	and effectively on a topological manifold $M$ such that $ H_*(M;\BZ)$ is finitely generated.
	Then the rank of $G$ is bounded in terms of $\dim(M)$ and $
	H_*(M;\BZ)$.
\end{thm}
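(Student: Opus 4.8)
The plan is to reduce the general case to the case of elementary abelian $p$-groups (where the Mann–Su bound, suitably generalized, applies) by a sequence of reductions. First I would reduce to the case where $G$ is a $p$-group. Indeed, the rank of a finite group $G$ is the supremum, over all primes $p$, of the rank of a Sylow $p$-subgroup $G_p$ — this is a standard group-theoretic fact. A Sylow $p$-subgroup $G_p$ acts effectively on $M$, so if we can bound its rank in terms of $\dim(M)$ and $H_*(M;\BZ)$ uniformly in $p$, we are done; note that for $p$ larger than some explicit function of $\dim(M)$ (coming from the dimension of the largest sphere in $M$ on which a cyclic group can act, or simply from $\chi$- or Smith-theoretic constraints) the rank of $G_p$ is already bounded, and for the finitely many small primes we argue separately.

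So assume $G$ is a finite $p$-group acting effectively on $M$. The rank of $G$ equals the maximal rank of an elementary abelian subgroup $A\cong(\ZZ/p)^r$ of $G$: one direction is trivial, and for the other, a $p$-group in which every elementary abelian subgroup has rank at most $r$ has rank bounded by a function of $r$ alone (this is a theorem on $p$-groups of bounded sectional $p$-rank; for $p$ odd, rank equals the maximal elementary abelian rank, and for $p=2$ there is a bounded loss). Thus it suffices to bound $r$ whenever $(\ZZ/p)^r$ acts effectively on $M$. Here the key input is the Mann–Su theorem \cite{Mann_Su}, but that is stated for \emph{compact} $M$, whereas our $M$ need only have finitely generated integral homology. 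The main obstacle is therefore to extend Mann–Su to this noncompact setting.

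To handle noncompactness, I would argue as follows. An effective action of $A=(\ZZ/p)^r$ on $M$ must be effective on some connected component, so we may assume $M$ connected. Pick a point $x\in M$ and consider its orbit; after passing to a subgroup of bounded corank we may assume the orbit data is controlled. The crucial classical ingredient is local: near a fixed point of a finite group acting on a manifold, the action has a well-understood local structure (Smith theory, the Borel formula), and one obtains bounds on $r$ in terms of $\dim(M)$ from purely local considerations \emph{at any fixed point}. If the $A$-action has a global fixed point this finishes the argument. In general, one uses the finite generation of $H_*(M;\BZ)$ to reduce to a compact piece: cover $M$ by the interiors of compact manifolds-with-boundary, or use that a connected manifold with finitely generated homology is homeomorphic, up to the relevant homological information, to the interior of a compact manifold with boundary (or at least admits an exhaustion by compact submanifolds whose homology is uniformly bounded). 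One then applies Smith–Floyd theory (\fref{prop:Smith-Floyd}) on such a compact exhausting piece $N$ invariant under a finite-index subgroup, bounding $\dim_{\FF_p} H_*(N^A;\FF_p)$ and hence, via the Mann–Su counting argument (comparing the Euler characteristics or the total $\FF_p$-Betti numbers of $M$ and of the fixed sets of the rank-one subgroups of $A$), bounding $r$. The bookkeeping needed to make the exhaustion $A$-invariant and to keep all homological quantities bounded in terms of $\dim(M)$ and $H_*(M;\BZ)$ alone is where the real work lies; the group theory in the first two reductions is routine.
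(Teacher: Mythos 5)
The central difficulty of the theorem is exactly the step you defer to ``bookkeeping'': how to run a Smith/Mann--Su argument on a noncompact $M$ using only the finite generation of $H_*(M;\BZ)$. Your proposed mechanism --- exhausting $M$ by $A$-invariant compact submanifolds with uniformly bounded homology, or realizing $M$ as the interior of a compact manifold with boundary --- is not justified and does not obviously work: a manifold with finitely generated homology need not be such an interior, and the terms of a compact exhaustion can have homology that is in no way controlled by $H_*(M;\BZ)$, so applying Mann--Su to an exhausting piece yields nothing. The paper sidesteps compactness entirely by working with sheaf cohomology with compact supports: Poincar\'e duality for cohomology manifolds gives $\dim H_c^*(M;\Orientation)\le\dim H_*(M;\BF_p)$, which is bounded in terms of $H_*(M;\BZ)$ uniformly in $p$ (\fref{lem:universal_bound_on_mod_p_cohomology}), and Smith--Floyd theory and the relevant spectral sequences all hold for $H_c^*$ on noncompact cohomology manifolds. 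The quantitative heart is then \fref{thm:Mann-Su}: pass to the subgroup preserving each of the at most $B$ components; on each component choose a \emph{maximal} stabilizer $H_i$, bound $\rank(H_i)\le d$ via Borel's fixed point formula (\fref{lem:Subgroups-of-Stabilisers}), and bound the rank of the quotient acting freely on $M_i^{H_i}$ by a dimension count in the spectral sequence of \fref{prop:Leray-spectral-sequence} (\fref{lem:Mann-Su-free}), using that for a free action $H_{G,c}^*$ is $H_c^*$ of the quotient, hence vanishes above degree $d$, while $\dim H^i(\BZ_p^r;\BF_p)$ grows like a polynomial of degree $r-1$. None of this needs a compact model for $M$, and your sketch contains no workable substitute.

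Your group-theoretic reductions also need repair. The rank of a finite group is \emph{not} equal to the supremum of the ranks of its Sylow subgroups as a ``standard fact''; the correct statement is the Guralnick--Lucchini theorem ($\rank(G)\le 1+\max_p\rank(G_p)$), which relies on the classification of finite simple groups, and even with it you still owe an argument for the finitely many small primes. The paper instead quotes a single result of Halasi--Podoski--Pyber--Szab\'o (\fref{lem:rank-and-elementary-abelian-subgroups}): if every elementary abelian subgroup of a finite group has rank at most $r$, then the group has rank at most $\tfrac12 r^2+2r+1$. This reduces the theorem in one step, for all finite groups and all primes simultaneously, to the elementary abelian case, with no Sylow theory and no case split on $p$.
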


\fref{thm:for-p-groups,topological-manifolds} is about actions of
finite $p$-groups.
One is tempted to ask the following far reaching question:

\begin{question}
\label{question:question}
  Let $M$ be a compact topological manifold.
  Can one find a bound $\tilde C(M)$ such that
  each finite group $G$ acting continuously on $M$
  has a subgroup $H\le G$ of index at most $\tilde C(M)$
  such that
  $$
  \big|\Stab(H,M)\big|\le \tilde C(M)?
  $$
\end{question}

This question is probably substantially more difficult than
\fref{thm:for-p-groups,topological-manifolds}, and might be out of reach
with the tools used in this paper.
Nevertheless, this question is answered affirmatively in
\cite{mundet:topological-rigidity-of-tori} for the $n$-dimensional torus or,
more generally, for closed oriented $n$-manifolds admitting a
continuous map of nonzero degree onto an $n$-dimensional torus.


The structure of the paper is the following. In
\fref{sec:preliminaries} we summarize some facts from Smith theory, on
equivariant cohomology, and on group cohomology that will be used
later. Section \ref{sec:Mann_Su} is devoted to the proof of
\fref{thm:Mann-Su_v0}. The proof of
\fref{thm:for-p-groups,topological-manifolds} is divided into two
parts. First we give a topological proof for elementary abelian
$p$-groups in \fref{sec:proof-p-groups}, then the case of arbitrary
$p$-groups is settled  inductively in \fref{sec:proof-general}. The
last section contains the proof of
\fref{cor:free-part-has-bounded-homology} and \fref{rem:free-part-has-bounded-homology}.

\subsection*{Acknowledgement}
L.~Pyber and E. Szab\'o were supported by the National Research, Development and
Innovation Office (NKFIH) Grant K138596. E.~Szab\'o was also supported
by the NKFIH Grant K120697. B.~Csik\'os was supported by the NKFIH Grant K128862.
The project leading to this application has received funding from the European Research Council (ERC) under the European Union's Horizon 2020 research and innovation programme (grant agreement No 741420).
The research of I. Mundet i Riera was partially supported by the grant 
PID2019-104047GB-I00 from the Spanish Ministeri de Ci\`encia i Innovaci\'o.

\section{Preliminaries}
\label{sec:preliminaries}
Throughout this paper, $p$ always denotes a prime number, $\BF$ will denote an arbitrary field, $\BF_p$ the field of $p$ elements.
A $d$-dimensional  cohomology manifold $M$ over $\BF_p$ will be called shortly  a {\cm} or {\dcm}. The dimension of a {\cm} $M$ will be denoted by $\dim_p(M)$. If, in addition, $M$ is equipped with an (effective) continuous action of the group $G$, then $M$ is called briefly an (effective) {\Gcm} or {\Gdcm}. Cohomology will always mean sheaf cohomology.

\subsection{Orientation sheaf}
 Every {\cm}
$M$ comes with an
\emph{orientation sheaf} $\Orientation$
(see \cite[Definition~V-9.1]{bredon2012sheaf} for a definition),
or with more precise notation $\Orientation[M]$,
if we need to emphasize $M$. If $\tilde M=\bigcup_{i=1}^kM_i$ is the disjoint union of the {\cm}'s $M_i$, not necessarily of the same dimension, then the orientation sheaf $\Orientation[\tilde M]$ of $\tilde M$ is defined as the sheaf whose restriction onto $M_i$ is $\Orientation[M_i]$ for all $i$.


Many of our constructions involving the orientation sheaf will be \emph{functorial} with respect to open embeddings due to the following simple proposition.

\begin{prop} \label{prop:Orientation-canonical}
	For any open embedding $f\colon N\to M$ of a {\cm} $N$ into a {\cm} $M$,
	there is a natural isomorphism
	$$
	\Orientation[N]\stackrel\isom\longrightarrow
	f^*\Orientation[M].
	$$
\end{prop}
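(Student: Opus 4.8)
The plan is to exploit that the orientation sheaf of a \cm{} is, by its definition in \cite[\S V.9]{bredon2012sheaf}, a cohomology sheaf of the sheaf-theoretic dualizing complex of the space — equivalently, its top non-vanishing Borel--Moore homology sheaf — so that its formation is intrinsically local and manifestly natural for homeomorphisms. Given those two features, the proposition follows by factoring the open embedding $f$ as a homeomorphism onto an open subspace of $M$ followed by an open inclusion. (If one allows \cm{}'s to be finite disjoint unions of cohomology manifolds of varying dimension, as in the paragraph preceding the proposition, one simply argues one component at a time; I will therefore assume $N$ and $M$ are of pure dimension $d$, which in any case is forced when $N\neq\emptyset$, an open embedding preserving local cohomological dimension.)

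\emph{First, the restriction compatibility.} For every open subset $U\subseteq M$ the subspace $U$ is again a \dcm, and there is a canonical isomorphism $\Orientation[M]\big|_U\isom\Orientation[U]$. This comes from the analogous property of the dualizing complex, $\scrD^\bullet_M\big|_U\isom\scrD^\bullet_U$, which holds because passing to an open subspace commutes with the formation of the dualizing complex (for a locally closed, in particular open, inclusion the exceptional inverse image agrees with the ordinary restriction); taking the relevant cohomology sheaf yields the displayed isomorphism. Equivalently, at the presheaf level $\Orientation[M]$ is the sheaf associated to the Borel--Moore homology presheaf $V\mapsto H^{\mathrm{BM}}_d(V;\BF_p)$, whose restriction maps along open inclusions make this compatibility evident. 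What matters is that the isomorphism is canonical, each ingredient being so.

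\emph{Second, homeomorphism functoriality and the conclusion.} Any homeomorphism $g\colon A\to B$ of \cm{}'s induces a canonical isomorphism $\Orientation[A]\isom g^*\Orientation[B]$, since the dualizing complex and its cohomology sheaves are natural for homeomorphisms. Now, given the open embedding $f\colon N\to M$, put $V=f(N)$, an open subspace of $M$ and hence a \dcm, and let $\bar f\colon N\to V$ be the homeomorphism induced by $f$, so that $f=\iota\circ\bar f$ with $\iota\colon V\hookrightarrow M$ the inclusion. Then
\[
\Orientation[N]\ \stackrel{\isom}{\longrightarrow}\ \bar f^{\,*}\Orientation[V]\ \stackrel{\isom}{\longrightarrow}\ \bar f^{\,*}\bigl(\Orientation[M]\big|_V\bigr)\ =\ f^*\Orientation[M],
\]
using homeomorphism functoriality for the first arrow, restriction compatibility (pulled back along $\bar f$) for the second, and $f^*=\bar f^{\,*}\circ\iota^*$ for the equality. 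The composite is the asserted isomorphism, and its naturality for a composition $N''\to N'\to M$ of open embeddings follows from the evident compatibility of the two families of canonical isomorphisms used above.

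The only genuinely non-formal ingredient is the compatibility of the dualizing complex — equivalently, of the Borel--Moore homology sheaves — with restriction to open subspaces; everything else is bookkeeping with canonical maps. That is where I expect to spend the most effort: pinning down the precise statement and reference in \cite{bredon2012sheaf}, and checking that the isomorphism it provides is the canonical one, so that the final naturality assertion is automatic rather than requiring a separate diagram chase.
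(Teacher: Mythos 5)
Your proof is correct, and it fills in exactly the reasoning the paper leaves implicit: the paper states this proposition without proof, calling it simple, precisely because the orientation sheaf is defined locally (in Bredon's sense, as the degree-$d$ Borel--Moore homology sheaf), so it is canonically compatible with restriction to open subsets and with homeomorphisms. Your factorization of $f$ through its image and the composition of the two canonical isomorphisms is the standard argument, so there is no substantive divergence from the paper's (omitted) justification.
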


\begin{defn}
	If $G$ is a discrete group, $X$ is a topological space equipped with a continuous action of $G$, or shortly a \emph{$G$-space}, and $R$ is a commutative unital ring, then a sheaf of $R$-modules $\pi\colon \mathcal A\to X$  on $X$ is a \emph{$G$-sheaf} if we are given a continuous action $\tau$ of $G$ on the sheaf space $\mathcal A$ for which $\tau_g(\mathcal A_x)={\mathcal A_{gx}}$ and the restriction of $\tau_g|_{\mathcal A_x}\colon {\mathcal A_x} \to {\mathcal A_{gx}} $ is a module isomorphism for every $g\in G$ and $x\in X$.
\end{defn}

The most important examples of $G$-sheaves in this paper are the
orientation sheaves of  cohomology manifolds with a group action.
\begin{prop}\label{prop:G-structure on orientaion_sheaf}
  For a {\Gcm} $M$, the action of $G$ on $M$ can be lifted in a unique natural way to a $G$-action on the sheaf space of $\Orientation[M]$, providing a natural $G$-sheaf structure on $\Orientation[M]$.
\end{prop}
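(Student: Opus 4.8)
The plan is to construct the lift of the $G$-action to the orientation sheaf by pushing forward along the homeomorphisms $g\colon M\to M$ and using functoriality. For each $g\in G$, the map $g\colon M\to M$ is a homeomorphism, hence in particular an open embedding, so \fref{prop:Orientation-canonical} supplies a natural isomorphism $\Orientation[M]\stackrel\isom\longrightarrow g^*\Orientation[M]$. Equivalently, adjointing, there is an isomorphism $(g^{-1})^*\Orientation[M]=g_*\Orientation[M]\stackrel\isom\longrightarrow\Orientation[M]$, which on sheaf spaces is exactly a continuous map $\tau_g$ covering $g$ and restricting to a module isomorphism $(\Orientation[M])_x\to(\Orientation[M])_{gx}$ on each stalk. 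So the existence part is immediate from \fref{prop:Orientation-canonical}; what needs checking is that $\tau_g\tau_h=\tau_{gh}$ and $\tau_1=\Id$, i.e. that the collection $\{\tau_g\}$ is genuinely an action and not just a bunch of isomorphisms.

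The cocycle condition is where the word \emph{natural} in \fref{prop:Orientation-canonical} does the work. First I would observe that the isomorphism of \fref{prop:Orientation-canonical} is functorial in the open embedding: for a composition $N\xrightarrow{f}M\xrightarrow{f'}P$ of open embeddings the diagram relating the isomorphism for $f'\circ f$ to the composite of the isomorphisms for $f$ and $f'$ commutes, and the isomorphism associated to $\Id_M$ is the identity. (If the preceding material did not already record this, it follows from the construction of the orientation sheaf: the stalk $(\Orientation[M])_x$ is $H^d_x(M;\BZ)$, local cohomology at $x$, which is manifestly functorial for open embeddings, being an excision-invariant local invariant; the isomorphism of \fref{prop:Orientation-canonical} is induced stalkwise by the inclusion-induced map on local cohomology, and these compose correctly.) Granting this functoriality, applying it to the pair of open embeddings $g,h\colon M\to M$ and their composite $gh$ yields exactly $\tau_g\circ\tau_h=\tau_{gh}$, and applying it to $\Id_M=1$ yields $\tau_1=\Id$. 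Continuity of $\tau_g$ is part of the statement of \fref{prop:Orientation-canonical} (an isomorphism of sheaf spaces is a homeomorphism), and joint continuity of the action $G\times\mathcal A\to\mathcal A$ is automatic since $G$ is discrete.

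For uniqueness and naturality, I would argue as follows. Any $G$-sheaf structure on $\Orientation[M]$ lifting the given $G$-action on $M$ consists, for each $g$, of a homeomorphism of sheaf spaces covering $g$ and acting as a module isomorphism on stalks; such a lift of a fixed homeomorphism $g$ is an isomorphism $\Orientation[M]\to g^*\Orientation[M]$ of sheaves on $M$. The point is that this isomorphism is forced: restricting to a connected, orientable coordinate neighborhood $U$ with $gU$ also inside such a chart, $\Orientation[M]|_U$ is the constant sheaf $\underline{\BZ}$, and an isomorphism of $\underline{\BZ}$ with itself over a connected base is $\pm\Id$; the local cohomology description pins down which sign, so the lift is unique locally, hence globally by the sheaf axiom. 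This simultaneously shows the lift is the one built above and that it is ``natural'' — compatible with the isomorphisms of \fref{prop:Orientation-canonical} for equivariant open embeddings $N\to M$, since those isomorphisms are themselves stalkwise the local-cohomology maps. The main obstacle is purely bookkeeping: making the functoriality of \fref{prop:Orientation-canonical} under composition of open embeddings precise enough that the cocycle identity $\tau_g\tau_h=\tau_{gh}$ drops out without sign ambiguities; once that compatibility is in hand, every other clause of the proposition is routine.
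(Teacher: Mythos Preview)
Your approach is essentially the paper's own---which is a single sentence invoking the functoriality of the orientation sheaf---just spelled out in detail. One small slip: since $M$ is a cohomology manifold over $\BF_p$, the stalks of $\Orientation[M]$ are $\BF_p$, not $\BZ$, so locally there are $p-1$ automorphisms rather than just $\pm1$; your uniqueness argument therefore does not go through as written, and indeed the proposition only asserts a unique \emph{natural} lift (the one forced by functoriality), which is what your existence argument already produces.
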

\begin{proof}
   Follows immediately from the functoriality of the orientation sheaf.
\end{proof}

\begin{lem}\label{lem:H_0_H^d_duality}
	If a $d$-{\cm} $M$ has $C$ connected components, then $C=\dim H_c^d(M, \Orientation)$.
\end{lem}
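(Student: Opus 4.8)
The plan is to reduce to the case of a connected cohomology manifold and then quote Poincaré duality. Since a {\cm} is locally connected, the $C$ connected components $M_1,\dots,M_C$ of $M$ are open, hence also closed, and each $M_i$ is itself a $d$-dimensional {\cm}. Applying \fref{prop:Orientation-canonical} to the open inclusions $M_i\hookrightarrow M$ identifies the restriction of $\Orientation[M]$ to $M_i$ with $\Orientation[M_i]$; this is also the definition, recalled above, of the orientation sheaf of a disjoint union. Since every compactly supported section of a sheaf on $M$ is a finite sum of compactly supported sections on the pieces $M_i$, compactly supported sheaf cohomology is additive over this finite disjoint union, so
$$
H_c^d(M,\Orientation)\isom\bigoplus_{i=1}^{C}H_c^d(M_i,\Orientation[M_i]).
$$
It therefore suffices to show that $\dim_{\BF_p}H_c^d(N,\Orientation[N])=1$ for every connected $d$-dimensional {\cm} $N$.

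For the connected case I would invoke Poincaré duality for cohomology manifolds (Bredon, \emph{Sheaf Theory}, Chapter~V), whose orientation sheaf coincides with the $\Orientation$ used here, both being the sheaf of \cite[Definition~V-9.1]{bredon2012sheaf}. It provides a natural isomorphism $H_c^d(N,\Orientation[N])\isom H^c_0(N;\BF_p)$ from the top-degree compactly supported sheaf cohomology to the degree-zero homology with compact supports and coefficients in $\BF_p$; for a connected, locally connected space the latter group is one-dimensional over $\BF_p$. Substituting this back into the displayed decomposition yields $\dim_{\BF_p}H_c^d(M,\Orientation)=C$.

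I do not expect a serious obstacle here: the lemma is essentially a repackaging of Poincaré duality, and the only real work is the passage from $M$ to its connected components. The point that calls for some care is the bookkeeping --- confirming that the paper's orientation sheaf is exactly the one appearing in the duality theorem, and keeping straight which of Bredon's homology theories occurs on the dual side (homology \emph{with compact supports}, whose degree-zero part over a field is the free module on the set of connected components, as opposed to Borel--Moore homology). The remaining ingredients --- local connectedness of a {\cm}, additivity of $H_c^*$ over a finite disjoint union, and the compatibility of $\Orientation$ with restriction to an open subset --- are standard, the last one already recorded earlier in this section.
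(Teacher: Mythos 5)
Your proof is correct and follows essentially the same route as the paper: Poincar\'e duality identifying $H_c^d(M,\Orientation)$ with the degree-zero homology with compact supports, followed by the standard fact that the latter is the free $\BF_p$-module on the connected components. The paper simply applies duality to $M$ all at once and cites \cite[V-5.14]{bredon2012sheaf} for the component count, so your preliminary decomposition into connected components is harmless but not needed.
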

\begin{proof}
By Poincar\'e duality \cite[V-9.2]{bredon2012sheaf}, $H_c^d(M, \Orientation)$ is isomorphic to the Borel-Moore homology $H_0^c(M, \BF_p)$ with compact support, and the latter is isomorphic to the free $\BF_p$-module generated by the connected components of $M$, see \cite[V-5.14]{bredon2012sheaf}. This implies the equality.
\end{proof}

\subsection{Smith theory}
Smith theory studies the fixed point set $F$ of a $\BZ_p$-action on a {\cm} $M$. The connected components of $F$ are known to be {\cm}'s, and a theorem of Smith and Floyd provides a bound on $\dim H_c^*\big(F,\Orientation\big)$ and $\dim H_c^*\big(M\setminus F,\Orientation\big)$ in terms of $\dim H_c^*\big(M,\Orientation\big)$. These results extend to arbitrary $p$-group actions by a simple induction.
\begin{prop} \label{prop:Smith-Floyd}
	Let $G$ be a finite $p$-group,
	and $M$ a {\Gcm}.
	Then each connected component $F$ of $\Fix{M}{G}$ is a {\cm},
	$\dim_p(F)\le\dim_p(M)$,
	and we have an isomorphism
	\begin{equation}\label{eq:O_F}
	\Orientation[M]\big|_F\isom\Orientation[F]
	\end{equation}
	which is natural with respect to open embeddings.
        If $M$ is connected and the $G$-action is non-trivial
        then $\dim_p(F)\le\dim_p(M)-1$.
	
	Furthermore,  we have the inequalities
\begin{align}\label{eq:Smith_Floyd_1}
	\dim H_c^*\big(\Fix{M}{G};\Orientation\big) &\le
	\dim H_c^*(M;\Orientation),
\\
	\dim H_c^*\big(M\setminus\Fix{M}{G};\Orientation\big) &\le
	2\,\dim H_c^*(M;\Orientation).\label{eq:Smith_Floyd_2}
\end{align}
\end{prop}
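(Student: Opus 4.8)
The plan is to prove Proposition~\ref{prop:Smith-Floyd} by reducing everything to the case $G = \BZ_p$, which is the classical content of Smith--Floyd theory, and then running an induction on $|G|$ using the fact that every nontrivial finite $p$-group has a nontrivial center, hence a central subgroup $Z \isom \BZ_p$. First I would dispose of the base case: if $G = \BZ_p$ acts on a \cm{} $M$, then by the classical results of Smith and Floyd (as presented, e.g., in \cite[Chapter~V]{bredon2012sheaf}) the fixed set $F = \Fix{M}{G}$ is locally a \cm{} — so each connected component is a \cm{} — with $\dim_p(F) \le \dim_p(M)$, and with $\dim_p(F) \le \dim_p(M)-1$ whenever $M$ is connected and the action is nontrivial. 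The two cohomological inequalities \eqref{eq:Smith_Floyd_1} and \eqref{eq:Smith_Floyd_2} for $\BZ_p$-actions are exactly the Floyd-type Euler-characteristic/dimension estimates for $H_c^*$ with coefficients in the orientation sheaf; I would cite these directly rather than reprove them. The natural isomorphism \eqref{eq:O_F} in the $\BZ_p$-case is the statement that the orientation sheaf of the fixed-point \cm{} is the restriction of the ambient orientation sheaf, again standard in Smith theory over $\BF_p$, and its naturality with respect to open embeddings follows from Proposition~\ref{prop:Orientation-canonical} together with the local nature of the construction.

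For the inductive step, write $|G| = p^k$ with $k \ge 2$, pick a central subgroup $Z \le G$ with $Z \isom \BZ_p$, and let $M' = \Fix{M}{Z}$. By the base case, each connected component of $M'$ is a \cm{} with $\dim_p \le \dim_p(M)$, and $\Orientation[M]\big|_{M'} \isom \Orientation[M']$ naturally. Since $Z$ is central, the quotient group $\bar G = G/Z$ acts on $M'$ (a disjoint union of \cm{}'s, possibly of varying dimensions — which is exactly why the orientation-sheaf formalism set up in the preliminaries allows $M'$ to be handled uniformly), and $\Fix{M}{G} = \Fix{M'}{\bar G}$. Applying the inductive hypothesis to the $\bar G$-action on each component of $M'$ gives that the components of $\Fix{M}{G}$ are \cm{}'s of dimension $\le \dim_p(M') \le \dim_p(M)$, together with the isomorphism $\Orientation[M']\big|_{\Fix{M}{G}} \isom \Orientation[\Fix{M}{G}]$, which composes with $\Orientation[M]\big|_{M'} \isom \Orientation[M']$ to give \eqref{eq:O_F}; naturality with respect to open embeddings is preserved under this composition. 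For the cohomological bounds, \eqref{eq:Smith_Floyd_1} follows by composing the inequalities for $Z$ on $M$ and for $\bar G$ on $M'$:
\[
\dim H_c^*\big(\Fix{M}{G};\Orientation\big) \le \dim H_c^*\big(M';\Orientation\big) \le \dim H_c^*\big(M;\Orientation\big).
\]
For \eqref{eq:Smith_Floyd_2}, one decomposes $M \setminus \Fix{M}{G} = (M \setminus M') \sqcup (M' \setminus \Fix{M}{G})$ and uses the long exact sequence of the pair (equivalently, additivity of $\dim H_c^*$ up to the usual factor of $2$ coming from the connecting maps): the first piece is bounded by $2\dim H_c^*(M;\Orientation)$ by the $Z$-case, and the second piece is bounded by $2\dim H_c^*(M';\Orientation) \le 2\dim H_c^*(M;\Orientation)$ by the $\bar G$-case applied to $M'$; a small amount of care is needed to see that these combine to the single factor $2$ rather than $4$, which is where I would be most careful.

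The dimension-drop refinement — if $M$ is connected and $G$ acts nontrivially then $\dim_p(\Fix{M}{G}) \le \dim_p(M) - 1$ — needs a separate argument in the inductive step, since $M'$ need not be connected. If $Z$ already acts nontrivially on $M$, then $\dim_p(M') \le \dim_p(M) - 1$ and we are done. If $Z$ acts trivially, then $M' = M$ and the $\bar G$-action on the connected \cm{} $M$ is nontrivial (as $G$ acts nontrivially and $Z$ is in the kernel), so the inductive hypothesis applies directly to give $\dim_p(\Fix{M}{\bar G}) \le \dim_p(M) - 1$. I expect the main obstacle to be purely bookkeeping rather than conceptual: making sure the orientation-sheaf isomorphism \eqref{eq:O_F} really is natural with respect to open embeddings through the two-step composition, and pinning down the constant $2$ in \eqref{eq:Smith_Floyd_2} through the induction; both are the kind of thing that is routine once the disjoint-union conventions for $\Orientation[\tilde M]$ from the preliminaries are used consistently.
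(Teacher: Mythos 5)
Your overall strategy coincides with the paper's: quote the $G\isom\BZ_p$ case from Bredon (fixed sets of $\BZ_p$-actions on \cm's are \cm's of dimension at most $\dim_p(M)$, with the orientation-sheaf identification \fref{eq:O_F}, the dimension drop via open-and-closedness, and the Smith--Floyd bound \fref{eq:Smith_Floyd_1}), then induct on $|G|$ through a central subgroup $Z\isom\BZ_p$ using $\Fix{M}{G}=\Fix{M'}{\bar G}$ with $M'=\Fix{M}{Z}$, $\bar G=G/Z$. One bookkeeping remark: $\bar G$ need not preserve a single connected component of $M'$, so the inductive hypothesis should be applied to the $G$-invariant union of the components in one $G$-orbit (the paper works with $GN\subseteq \Fix{M}{Z}$, where $N$ is the component containing the given component $F$ of $\Fix{M}{G}$), not ``to each component''; this changes nothing essential, and your handling of the dimension drop and of \fref{eq:Smith_Floyd_1} is the same as the paper's.

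The genuine gap is your treatment of \fref{eq:Smith_Floyd_2}, and it is exactly the point you flagged. The decomposition $M\setminus\Fix{M}{G}=(M\setminus M')\sqcup\bigl(M'\setminus\Fix{M}{G}\bigr)$ together with the long exact sequence for $H_c^*$ only gives
\[
\dim H_c^*\bigl(M\setminus\Fix{M}{G};\Orientation\bigr)\;\le\;
\dim H_c^*\bigl(M\setminus M';\Orientation\bigr)+\dim H_c^*\bigl(M'\setminus\Fix{M}{G};\Orientation\bigr)
\;\le\;4\,\dim H_c^*(M;\Orientation),
\]
since each summand can genuinely be as large as $2\dim H_c^*(M;\Orientation)$; there is no ``care'' in that bookkeeping that recovers the constant $2$, and there is no ``factor of $2$ coming from the connecting maps'' in the additivity estimate. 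The missing idea is that \fref{eq:Smith_Floyd_2} should not be run through the induction at all: once \fref{eq:Smith_Floyd_1} is proved for $G$, apply the long exact sequence of the pair $(M,M\setminus\Fix{M}{G})$ for cohomology with compact support a single time (the fixed set is closed, and its orientation sheaf is the restriction of $\Orientation[M]$ by \fref{eq:O_F}), obtaining $\dim H_c^*\bigl(M\setminus\Fix{M}{G};\Orientation\bigr)\le\dim H_c^*(M;\Orientation)+\dim H_c^*\bigl(\Fix{M}{G};\Orientation\bigr)\le 2\dim H_c^*(M;\Orientation)$. This is how the paper concludes; with that replacement your argument is complete.
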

\begin{proof}
	First we prove the statements on $F$ and inequality \fref{eq:Smith_Floyd_1} by induction on $|G|$.

	Assume first that $G\isom\BZ_p$. Theorem V-20.1 in
	\cite{bredon2012sheaf}
	implies that $F$ is a homology manifold over $\BF_p$ of dimension
	$\dim_p(F)\le\dim_p(M)$,
	and gives us an isomorphism \eqref{eq:O_F} which depends only on the choice of a generator of $G$. Once such a generator is fixed, the isomorphisms corresponding to that generator behave in a natural way with respect to open embeddings.
	It is stated after the proof of
	\cite[Theorem V-20.2]{bredon2012sheaf},
	that $F$ is a {\cm} as well.
	
	Next we prove that if $M$ is connected and the $G$-action is nontrivial then
        $\dim_p(F)\ne\dim_p(M)$.  Suppose, on the contrary, that $\dim_p(F)=\dim_p(M)$. Then
	$F$ is open by \cite[Corollary V-16.19]{bredon2012sheaf},
	and it is closed as well. As $M$ is connected and $F\neq \emptyset$, we must have $F=M$.
	This contradicts the effectiveness of the action.
	
	Finally, inequality \fref{eq:Smith_Floyd_1} for $G=\BZ_p$ is guaranteed by the Smith--Floyd theorem \cite[Theorem II-19.7]{bredon2012sheaf}.
	
	Next we do the induction step.
	Choose a subgroup $A\isom\BZ_p$ in the center of $G$. If $F$ is a connected component of $M^G$, and $N$ is the connected component of $M^A$ containing $F$, then $N$ and $GN\subseteq M^A$ are {\cm}'s and we have $\dim_p(N)=\dim_p(GN)<\dim_p(M)$ by the induction hypothesis applied to the $A$-action on $M$. As the action of $A$ on $GN$ is trivial, there is a canonical $G/A$ action on $GN$ and $F$ is a connected component of $(GN)^{G/A}$. Applying the induction hypothesis again to this $(G/A)$-action, we see that our statements hold for $F$. Observe that the recursive construction of the isomorphism \eqref{eq:O_F} depends on the choice of a composition series of $G$, and a choice of a generator in each composition factor, but if these data are fixed for $G$, then the resulting isomorphisms will behave naturally with respect to open embeddings.
	
	As for inequality  \fref{eq:Smith_Floyd_1}, the induction hypothesis yields
	\begin{align*}
	\dim H_c^*\big(\Fix{M}{G};\Orientation\big) &=	\dim H_c^*\big((\Fix{M}{A})^{G/A};\Orientation\big)\le \dim H_c^*\big(\Fix{M}{A};\Orientation\big)\\&\le
\dim H_c^*(M;\Orientation).	
	\end{align*}

	Inequality \fref{eq:Smith_Floyd_2} follows from inequality \fref{eq:Smith_Floyd_1}
by the long exact sequence of the pair $(M,M\setminus M^G)$ for cohomology with compact support
(see \cite[II-10.3]{bredon2012sheaf}).
\end{proof}

\subsection{Equivariant cohomology}

Here we briefly recall the definition of equivariant cohomology with
compact support, denoted by $H_{G,c}^*(M;-)$,
group cohomology, denoted by $H^*(G;-)$,
and some basic tools for their computation.
Our main goal is to give a proof of the following three propositions,
that will be needed later.

\begin{prop}\label{prop:long-exact-sequence-of-closed-subset-cor}
	Let $G$ be a finite $p$-group, $M$ a {\Gcm}.
	Assume that the fixed point sets of the normal subgroups
	$H_1,\dots,H_k$ of $G$ are pairwise disjoint.
	If  $F=\bigcup_{i=1}^k M^{H_i}$ is their union,
	and $U=M\setminus F$, then
	we have a long exact sequence 
	$$
	\cdots
	H_{G,c}^d\big(U;\Orientation[U]\big)\to
	H_{G,c}^d\big(M;\Orientation[M]\big)\to
	H_{G,c}^d\big(F;\Orientation[F]\big)\to
	H_{G,c}^{d+1}\big(U;\Orientation[U]\big)\cdots,
	$$
	which is functorial for $G$-equivariant open embeddings.
\end{prop}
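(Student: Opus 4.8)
The plan is to realize all three equivariant cohomology groups in the statement via the Borel construction and then apply the standard long exact sequence of a pair in sheaf cohomology with compact support. Concretely, let $EG\to BG$ be a universal $G$-bundle, with $EG$ a (possibly infinite-dimensional) free $G$-CW complex, and for a $G$-space $X$ write $X_{hG}=EG\times_G X$ for the homotopy quotient. Equivariant cohomology with compact support of $X$ with coefficients in a $G$-sheaf $\CA$ is, by definition, $H^*_{G,c}(X;\CA)=H^*_c(X_{hG};\CA_{hG})$, where $\CA_{hG}$ is the sheaf on $X_{hG}$ induced by $\CA$ together with its $G$-equivariant structure (one builds this using finite-dimensional skeleta $EG^{(n)}$ and passes to a limit; the orientation sheaves here carry canonical $G$-structures by \fref{prop:G-structure on orientaion_sheaf}). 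The key geometric input is that $U=M\setminus F$ is an open $G$-invariant subset of $M$ and $F$ is the complementary closed $G$-invariant subset. Passing to homotopy quotients is exact in the appropriate sense: $U_{hG}$ is open in $M_{hG}$ with closed complement $F_{hG}$, because $EG\times_G(-)$ preserves open and closed $G$-invariant subsets.

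Granting this, the argument is:

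\begin{enumerate}
\item[(1)] Fix $EG$ and form $M_{hG}$, $U_{hG}\subseteq M_{hG}$ open, $F_{hG}\subseteq M_{hG}$ closed complement.
\item[(2)] By \fref{prop:Orientation-canonical}, the orientation sheaf $\Orientation[U]$ is canonically $j^*\Orientation[M]$ for the open inclusion $j\colon U\hookrightarrow M$, $G$-equivariantly; likewise \eqref{eq:O_F} (naturality with respect to open embeddings, as recorded in \fref{prop:Smith-Floyd}) identifies $\Orientation[M]|_F$ with $\Orientation[F]$ compatibly with the $G$-structures. Hence the induced sheaves on $U_{hG}$ and $F_{hG}$ are the restriction, resp. the restriction-to-closed-subset, of the sheaf $\Orientation[M]_{hG}$ on $M_{hG}$.
\item[(3)] Apply the long exact sequence of the pair $(M_{hG},\,M_{hG}\setminus F_{hG})$ for sheaf cohomology with compact support, i.e. \cite[II-10.3]{bredon2012sheaf}, to the sheaf $\Orientation[M]_{hG}$. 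This yields exactly the displayed sequence.
\item[(4)] Functoriality for $G$-equivariant open embeddings $\varphi\colon M'\to M$ follows because $\varphi$ induces an open embedding $\varphi_{hG}\colon M'_{hG}\to M_{hG}$ carrying $U'_{hG}$ into $U_{hG}$ and $F'_{hG}$ into $F_{hG}$, and the canonical identifications of orientation sheaves in step (2) are themselves natural with respect to open embeddings; the long exact sequence of a pair is natural for maps of pairs, and pullback along an open embedding is compatible with compactly-supported cohomology.
\end{enumerate}

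The main obstacle is step (1)–(2): making precise the compactly-supported equivariant cohomology of a $G$-sheaf via the Borel construction when $EG$ is infinite-dimensional, and checking that the sheaf $\Orientation[M]_{hG}$ restricts correctly along $U_{hG}\hookrightarrow M_{hG}\hookleftarrow F_{hG}$ so that the ordinary pair sequence applies verbatim. The honest way to handle this is to work first with the finite skeleta $EG^{(n)}$, where $M\times_G EG^{(n)}$ is an honest (finite-dimensional, paracompact) space, $U\times_G EG^{(n)}$ is open in it with closed complement $F\times_G EG^{(n)}$, and \cite[II-10.3]{bredon2012sheaf} applies directly; then one passes to the limit over $n$, using that the relevant sheaves are pulled back along the bundle projections and that compactly-supported cohomology of the mapping-telescope/limit behaves well here (this is the standard setup for $H^*_{G,c}$ and will be established in the surrounding subsection, so I may quote it). Everything else — the identifications of orientation sheaves, and functoriality — is formal once that foundational point is in place.
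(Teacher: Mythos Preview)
Your approach is genuinely different from the paper's, and it contains a gap that you yourself flag but do not resolve.

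The paper does \emph{not} use the Borel construction to define or compute $H^*_{G,c}$. It adopts Grothendieck's definition $H^i_{G,c}(X;\CA)=\rmR^i\Gamma_c^G(\CA)$ throughout, and the long exact sequence is obtained by pure homological algebra: one takes the short exact sequence of $G$-sheaves $0\to(\CA|_U)^X\to\CA\to(\CA|_F)^X\to 0$ on $X$, applies the $\delta$-functor $H^*_{G,c}(X;-)$, and then identifies $H^*_{G,c}(X;(\CA|_A)^X)\cong H^*_{G,c}(A;\CA|_A)$ for $A$ open or closed. That last identification is the only nontrivial point, and the paper handles it by showing both sides are universal $\delta$-functors agreeing in degree $0$ (the effaceability of the right-hand side is checked via a spectral sequence argument and the injectivity of stalks of injective $G$-sheaves). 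Finally one plugs in $\CA=\Orientation[M]$ and uses $\Orientation[M]|_U\cong\Orientation[U]$ and $\Orientation[M]|_F\cong\Orientation[F]$.

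Your route via $X_{hG}$ and finite skeleta is conceptually reasonable but relies on a foundational claim that is \emph{not} in the surrounding subsection: you assert that the limit over skeleta ``is the standard setup for $H^*_{G,c}$ and will be established in the surrounding subsection, so I may quote it.'' It is not established there --- the paper explicitly sets the Borel model aside in favor of Grothendieck's. Making the limit argument rigorous for compactly supported cohomology is not routine (compact support does not commute with direct limits of spaces in general), so as written this is a genuine gap rather than a stylistic difference. If you want to keep your approach, you would need to supply that limit comparison yourself; the paper's derived-functor route avoids the issue entirely.
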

\begin{prop}\label{prop:free-action-cohomology-cor}
	Let $G$ be a finite group, and $M$ a {\Gcm}.
	\begin{enumerate}[\indent(a)]
		\item \label{item:8}
		If the action of $G$ is free, then
		$$
		H_{G,c}^*\big(M;\Orientation[M]\big) \isom
		H_c^*\big(M/G;\Orientation[M/G]\big).
		$$
		\item \label{item:9}
		If $G=K\times L$ and the stabilizer of each point of $M$ is the subgroup $L$,
		then
		$$
		H_{G,c}^*\big(M;\Orientation[M]\big) \isom
		H^*(L;\BF_p)\otimes H_c^*\big(M/G;\Orientation[M/G]\big).
		$$
	\end{enumerate}
	These isomorphisms are natural transformations
	for $G$-equivariant open embeddings.
\end{prop}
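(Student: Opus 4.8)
Recall that $H_{G,c}^*(M;\mathcal A)$ is the $G$-hypercohomology of the complex of compactly supported sections of a $G$-equivariant $c$-soft resolution of the $G$-sheaf $\mathcal A$, i.e.\ $\mathbb H^*\big(G;R\Gamma_c(M;\mathcal A)\big)$, where $R\Gamma_c(M;\mathcal A)$ carries the $G$-action coming from the $G$-sheaf structure on $\mathcal A$. The plan is to reduce both statements to the non-equivariant theory of the quotient. The geometric inputs are that a free action of a finite group is a covering space action, together with the functoriality of the orientation sheaf for open embeddings (\fref{prop:Orientation-canonical}); the homological inputs are that induced $\BF_p[G]$-modules are $G$-cohomologically trivial, that derived invariants for a product $K\times L$ factor as derived $L$-invariants of derived $K$-invariants, and the Künneth formula over the field $\BF_p$.

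For part~(a): since $G$ is finite and acts freely, the orbit map $q\colon M\to M/G$ is a finite covering, so $M/G$ is again a {\cm} with $\dim_p(M/G)=\dim_p(M)$, and $q_!=q_*$ is exact, commutes with $\Gamma_c$ and with forming the canonical $c$-soft resolution, all $G$-equivariantly; hence $R\Gamma_c\big(M;\Orientation[M]\big)\isom R\Gamma_c\big(M/G;q_*\Orientation[M]\big)$ in the derived category of $\BF_p[G]$-modules. Because $G$ permutes each fibre $q^{-1}(y)$ freely and transitively, the stalk $\big(q_*\Orientation[M]\big)_y=\bigoplus_{x\in q^{-1}(y)}(\Orientation[M])_x$ is a free $\BF_p[G]$-module, so the terms of the canonical $c$-soft resolution of $q_*\Orientation[M]$ have $\BF_p[G]$-free modules of compactly supported sections; therefore $R\Gamma_c\big(M/G;q_*\Orientation[M]\big)$ is represented by a complex of $G$-cohomologically trivial modules, and its $G$-hypercohomology is computed by taking $G$-invariants, giving $H_c^*\big(M/G;(q_*\Orientation[M])^G\big)$. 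It remains to identify the descended sheaf $(q_*\Orientation[M])^G$ with $\Orientation[M/G]$: over an evenly covered open $V\subseteq M/G$, a choice of sheet $\tilde V\subseteq M$ gives, via \fref{prop:Orientation-canonical} applied to $\tilde V\hookrightarrow M$ and the homeomorphism $\tilde V\isom V\hookrightarrow M/G$, an isomorphism $\Orientation[M]|_{\tilde V}\isom\Orientation[M/G]|_V$, and naturality makes these independent of the sheet and compatible on overlaps, so they glue to $(q_*\Orientation[M])^G\isom\Orientation[M/G]$. This proves part~(a).

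For part~(b): write $G=K\times L$ with $L$ the common point stabilizer, so $L$ acts trivially on $M$ while $K\isom G/L$ acts freely, and $M/G=M/K$. By \fref{prop:Orientation-canonical} applied to $f=\mathrm{id}_M$, the $G$-sheaf structure on $\Orientation[M]$ restricts to the identity on $L$, so $L$ acts trivially on $\Orientation[M]$ and on $R\Gamma_c\big(M;\Orientation[M]\big)$. Carrying out the computation of part~(a) for the free $K$-action and tracking the residual $L$-action, which stays trivial (as $L$ acts trivially on $M/K=M/G$ and on $\Orientation[M/G]$), gives $\mathbb H^*\big(K;R\Gamma_c(M;\Orientation[M])\big)\isom R\Gamma_c\big(M/G;\Orientation[M/G]\big)$ in the derived category of $\BF_p[L]$-modules with trivial $L$-action. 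Applying $\mathbb H^*(L;-)$ and the factorization of derived $G$-invariants for $G=K\times L$, it remains to compute $\mathbb H^*(L;C^\bullet)$ for a bounded-below complex $C^\bullet$ of $\BF_p$-vector spaces with trivial $L$-action; choosing a resolution with trivial $L$-action, the Künneth formula over $\BF_p$ gives $\mathbb H^*(L;C^\bullet)\isom H^*(L;\BF_p)\otimes_{\BF_p}H^*(C^\bullet)$. Taking $C^\bullet$ to compute $H_c^*\big(M/G;\Orientation[M/G]\big)$ yields $H_{G,c}^*\big(M;\Orientation[M]\big)\isom H^*(L;\BF_p)\otimes H_c^*\big(M/G;\Orientation[M/G]\big)$.

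For naturality, a $G$-equivariant open embedding $N\hookrightarrow M$ restricts to an open embedding of coverings $N/G\hookrightarrow M/G$, and pushforward along the covering, taking invariants, forming the canonical $c$-soft resolution, hypercohomology, and the Künneth map are all functorial in the relevant sense, while the naturality of the orientation-sheaf identification is supplied by \fref{prop:Orientation-canonical}. I expect the main obstacle to be the bookkeeping with compact supports: checking that $q_!=q_*$ commutes with $R\Gamma_c$ and with the canonical equivariant resolution, that stalkwise $\BF_p[G]$-freeness passes to $G$-cohomological triviality of the compactly supported sections (so that $G$-hypercohomology collapses to invariants), that the quotient of a free finite action on a {\cm} is again a {\cm} of the same dimension, and that the Künneth splitting in part~(b) holds at the chain level once the $L$-action is trivial --- each standard, but each needing the equivariant sheaf-cohomology-with-supports machinery in just the right form.
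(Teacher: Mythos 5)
Your proposal is correct and follows the same overall strategy as the paper: descend along the free quotient to reduce equivariant cohomology on $M$ to ordinary compactly supported cohomology on $M/G$, identify the descended sheaf with $\Orientation[M/G]$, and split off the trivially acting factor $L$ by a K\"unneth argument over $\BF_p$. The implementations differ in packaging. The paper isolates a more general statement (\fref{prop:free_cross_trivial_action}, for an arbitrary $K$-sheaf of vector spaces tensored with an $\BF L$-module), proves the descent step by observing that $f_*^K$ is an \emph{equivalence of categories} between $G$-sheaves on $X$ and $L$-sheaves on $Y$ (so derived functors transport formally), handles the K\"unneth step by showing $\scrI^\bullet\otimes_\BF\scrJ^\bullet$ is $\Gamma_c^L$-acyclic via Grothendieck's spectral sequence, and then obtains (a) as the special case $L=\{1\}$ of (b). You instead prove (a) first and build (b) on it, and you replace the equivalence-of-categories argument by cohomological triviality of the induced $\BF_p[G]$-modules appearing in a canonical resolution of $q_*\Orientation[M]$; this works, but it is the one place where your write-up compresses real content --- you need that compactly supported sections of the Godement terms (products, and filtered colimits over compact supports, of free stalks) remain $G$-cohomologically trivial, and that taking $G$-invariants of the resolution stays exact and $c$-soft, before hypercohomology collapses to invariants. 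You are also more explicit than the paper about the identification $(q_*\Orientation[M])^G\isom\Orientation[M/G]$, which the paper dismisses as obvious. Neither route has a gap; the paper's is slightly more economical because the equivalence of categories sidesteps the acyclicity bookkeeping in the descent step.
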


\begin{prop}
	\label{prop:Leray-spectral-sequence}
	Let $G$ be a finite group, and $M$ a {\Gcm}.
	Then
	there exists a spectral sequence
	\begin{equation}
	\label{eq:3}
	\BE_t^{i,j} \Longrightarrow H_{G,c}^{i+j}(M;\Orientation),
	\quad\quad
	\BE_2^{i,j}=H^i\big(G;H_c^j(M;\Orientation)\big),
	\end{equation}
	which is functorial in $M$ with respect to $G$-equivariant open embeddings.
\end{prop}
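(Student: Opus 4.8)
The plan is to realize $H_{G,c}^*(M;\Orientation)$ via the Borel construction and then feed the resulting fibration into the Leray spectral sequence, being careful throughout about two technical points: the use of \emph{compact} supports (which behaves subtly in families), and the fact that we are computing cohomology with coefficients in the orientation \emph{sheaf} rather than a constant sheaf. Concretely, recall that for a discrete group $G$ acting on a $G$-space $M$ one defines $H_{G,c}^*(M;\CA)$ as the cohomology with compact supports of the quotient $M_G := \EGN\times_G M$ (the Borel construction), with coefficients in the sheaf induced on $M_G$ by the $G$-sheaf $\CA$; here one should model $\EGN$ by a locally compact simplicial complex (or take a limit over its finite skeleta, using the skeletal filtration) so that compact-support cohomology is well defined and continuous. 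In our situation $\CA=\Orientation[M]$ carries its canonical $G$-sheaf structure from Proposition~\ref{prop:G-structure on orientaion_sheaf}.

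The key steps, in order, are as follows. First I would set up the map $\pi\colon M_G \to BG$ induced by $M\to \mathrm{pt}$; its fibre over every point is $M$ itself. Second, I would invoke the Leray spectral sequence of $\pi$ with compact supports: for a (suitably nice — locally compact, finite cohomological dimension) map, Leray gives $\BE_2^{i,j}=H^i_c\big(BG; \mathcal{R}^j\pi_!\,\Orientation\big) \Rightarrow H^{i+j}_c(M_G;\Orientation) = H_{G,c}^{i+j}(M;\Orientation)$. Third, I would identify the higher direct image with compact supports: since the fibre is $M$ and $BG$ (modelled simplicially, or via the bar construction) is built out of copies of $G$ acting by deck transformations on the universal cover, $\mathcal{R}^j\pi_!\,\Orientation$ is the local system on $BG$ whose stalk is $H_c^j(M;\Orientation)$ with the monodromy $G$-action coming precisely from the $G$-sheaf structure on $\Orientation[M]$. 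Fourth, I would recognize $H^i_c$ of $BG$ with coefficients in a local system as group cohomology: because $G$ is finite (hence $BG$ can be taken with compact skeleta and $H^*_c(BG;-)=H^*(BG;-)$ on each skeleton, stabilizing), one gets $H^i_c(BG;\underline{H_c^j(M;\Orientation)})\isom H^i\big(G; H_c^j(M;\Orientation)\big)$, which is exactly the stated $\BE_2$ term. Finally, functoriality in $M$ for $G$-equivariant open embeddings follows because an open embedding $N\hookrightarrow M$ induces an open embedding $N_G\hookrightarrow M_G$ over $BG$, compatible with the sheaf isomorphism $\Orientation[N]\isom f^*\Orientation[M]$ of Proposition~\ref{prop:Orientation-canonical}, and both the Leray spectral sequence and extension-by-zero for compact supports are natural with respect to such maps.

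The main obstacle I expect is making the compact-support formalism genuinely rigorous for the infinite-dimensional space $M_G$: $\EGN$ is not locally compact, so $H^*_c(M_G;-)$ must be \emph{defined} as a limit (or colimit) over the finite skeleta $\EGN[n]\times_G M$, and one must check that the Leray spectral sequences of the truncated fibrations $\EGN[n]\times_G M\to \BGN[n]$ assemble compatibly and that the $\BE_2$-terms stabilize to group cohomology — this is where the hypothesis that $G$ is finite is essential (each $\BGN[n]$ has finitely many cells, so no $\varprojlim^1$ issues arise and $H^*_c=H^*$ there). A secondary subtlety is verifying that the monodromy of $\mathcal{R}^j\pi_!\,\Orientation$ on $BG$ is the correct $G$-action: one must trace through the identification of the fibre transport with the $G$-sheaf automorphisms $\tau_g$ of $\Orientation[M]$, which is a diagram-chase using naturality of the orientation sheaf under the homeomorphisms $x\mapsto gx$. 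Once these foundational points are pinned down, the spectral sequence itself is the standard Leray machine and the rest is bookkeeping.
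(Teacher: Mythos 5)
Your route is genuinely different from the paper's, and the difference matters. The paper defines $H_{G,c}^*(M;\CA)$ as the derived functor $\rmR^*\Gamma_c^G(\CA)$ (Grothendieck's definition), and its entire proof of the proposition is a one-line citation: the functor $\Gamma_c^G$ is the composition of $\Gamma_c$ with the $G$-invariants functor $(-)^G$, so the claimed spectral sequence is Grothendieck's second spectral sequence from Section~5.7 of the Tohoku paper, i.e.\ the Grothendieck spectral sequence of a composite functor. The only input needed there is that $\Gamma_c$ of an injective $G$-sheaf is an acyclic $G$-module, which follows from the structure of injective $G$-sheaves as direct summands of products of skyscrapers with injective $\BF_pG_x$-module stalks (the same fact exploited in \fref{lem:injective_stalk}). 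Functoriality for open embeddings is then automatic from the naturality of the Grothendieck spectral sequence and \fref{prop:Orientation-canonical}. This approach never leaves the derived-functor framework and so never meets the compact-support pathologies of infinite-dimensional spaces.

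Your proposal instead builds the Borel construction $M_G=EG\times_G M\to BG$ and runs the Leray spectral sequence with compact supports. This is the classical ``Borel spectral sequence'' that the paper mentions only as a remark, and it can be made to work, but as written there is a genuine gap: the proposition is a statement about $\rmR^*\Gamma_c^G(\Orientation)$, whereas you compute $H_c^*(M_G;-)$, and the identification of these two objects is precisely the nontrivial content here. The paper asserts the equivalence of the two definitions but never proves or uses it, so you cannot lean on it; establishing it (say, by comparing both sides over the skeleta of $EG$, or by a \v{C}ech/resolution argument) is essentially as much work as the proposition itself. Beyond that, the obstacles you flag --- defining $H_c^*$ on the non-locally-compact $M_G$ as a limit over skeleta, checking stabilization of the $\BE_2$-terms, and identifying the monodromy of $\mathcal{R}^j\pi_!\Orientation$ with the $G$-sheaf action $\tau_g$ --- are real and are left unresolved; they are the proof, not the bookkeeping. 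If you adopt the paper's derived-functor definition from the outset, all of these issues evaporate and the proposition reduces to a standard composite-functor spectral sequence.
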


 Actually, it will be more convenient to formulate and prove  these propositions in a more general form. Here are the details.
\smallskip

The following construction is due to A. Borel
(see \cite[Section~IV-3.1]{borel1960seminar}).
Let $G$ be a topological group and choose a universal principal
$G$-bundle $EG\to BG$. If $X$ is a $G$-space, then there is an
associated bundle $(EG\times X)/G\to BG$ with fiber $X$, the total
space $X_G=(EG\times X)/G$ of which is called the \emph{homotopy  quotient of $X$}.
Assuming $G$ is a compact Lie group,
Borel approximates $EG$ by a compact $N$-universal principal bundle.
Using that he obtains an approximation $X_G^{(N)}$ of $X_G$,
and studies its cohomology groups (or rather their limits as $N\to\infty$).
These are called the \emph{equivariant cohomology groups of $X$}.
In this paper, we use the version which uses cohomology with compact
support, $H_c^*(\tilde X_G,-)$.
However, it is more convenient for us to use a different (but equivalent)
approach due to Grothendieck \cite{Grothendieck}.

Suppose now that $G$ is a discrete group.
If we are given a $G$-sheaf $\mathcal A$ of $R$-modules on the
$G$-space $X$,
then we can consider the $R$-module $\Gamma_{c}^G(\mathcal A)$ of
$G$-equivariant sections of $\mathcal A$ with compact support.
The functor $\Gamma_{c}^G(-)$ is left exact, so it gives rise to the
right derived functors $\rmR^i\Gamma_{c}^G(-)$.

\begin{defn}[Grothendieck {\cite[Section~5.7]{Grothendieck}}]
  The $i$-dimensional equivariant sheaf cohomology of the
    $G$-sheaf $\mathcal A$ of $R$-modules on the $G$-space $X$ with
    compact support is the $R$-module\footnote{ Grothendieck used the
      notation $H_{c}^i(X;G,\mathcal A)$ for this group.}
  \[
    H_{G,c}^i(X;\mathcal A)=\rmR^i\Gamma_{c}^G(\mathcal A).
  \]
\end{defn}

We'll need another derived functor, also defined by Grothendieck.

\begin{defn}
  Let $X$ be a $G$-space,
  and $f\colon X\to X/G$ be the quotient map.
  For a $G$-sheaf $\CA$ on $X$ let $f_*^G(\CA)$ denote
  the subsheaf of $G$-invariant elements of the sheaf $f_*(\CA)$.
  The functor $f^G_*$ is left exact,  $\rmR^if^G_*$ denotes its $i$-th derived functor.
\end{defn}

The notion of group cohomology is an important special case of equivariant cohomology.
\begin{defn}\label{defn:Group-cohomology}
  Let $G$ be a discrete group, $V$ an $RG$-module.
  Let  $\mathcal V$ be the $G$-sheaf on a one point space $\{p\}$,
  with stalk $V$ at $p$, where $G$ acts on the stalk via the
  $RG$-module structure.
  Then the \emph{group cohomology of  $G$ with coefficients in  $V$} is
  $$
  H^*(G;V)=H_{G,c}^*\big(\{p\};\mathcal V\big).
  $$	
\end{defn}

The cohomology ring\footnote{
  In this paper, we neither define, nor use the ring structure,
  but it is easier to describe these cohomology groups in terms of rings.}
of abelian groups is well-known.
We collect here some useful facts.

\begin{prop} \label{prop:cohomology-of-elementary-abelian-p-groups}
	Let $G$ be an elementary abelian $p$-group of rank $r$.
	\begin{enumerate}[\indent(a)]
		\item \label{item:10}
		For $p=2$, we have $H^*\big(G;\BF_2\big)=\BF_2[t_1,\dots,t_r]$
		with $\deg(t_i)=1$ for all $i$.
		\item \label{item:11}
		If  $p\ge3$, then
		$H^1(G;\BF_p)$ is naturally isomorphic to $\Hom(G,\BF_p)$,
		the Bockstein homomorphism
		$\beta^1\colon H^1(G;\BF_p)\hookrightarrow H^2(G;\BF_p)$ is injective,
		and
		$$
		H^*(G;\BF_p) =
		\Lambda^*\big(H^1(G;\BF_p)\big)\otimes
		S^*\big(\beta^1(H^1(G;\BF_p))\big)
		$$
		where, for a vector space $V$ over $\BF_p$,
		$\Lambda^*(V)$ and $S^*(V)$ denote the Grassmann algebra and the
		symmetric algebra of $V$.
		In particular,
		for all $d$, we have a natural isomorphism
		$$
		H^d(G;\BF_p) =
		\bigoplus_{l+2s=d}
		\Lambda^l\big(\Hom(G;\BF_p)\big) \otimes
		S^s\big(\Hom(G;\BF_p)\big).
		$$
		\item \label{item:12} For all $p$, we have
		$$
		\dim H^d(G;\BF_p) = \binom{d+r-1}{d} = \binom{d+r-1}{r-1},
		$$
		and
		$$
		\dim H^d(G;\BF_p) = \frac{d+1}{r+d}\,\dim H^{d+1}(G;\BF_p).
		$$
		\item \label{item:14}
		For any finite dimensional $\BF_pG$-module $V$,
		we have
		$$
		\dim H^d(G;V) \le \dim H^d(G;\BF_p)\dim(V).
		$$
	\end{enumerate}
\end{prop}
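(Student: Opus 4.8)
The plan is to reduce every assertion to the classical computation of the mod-$p$ cohomology ring of an elementary abelian $p$-group. First I would check that the group cohomology of \fref{defn:Group-cohomology} coincides with the usual one: over the one-point space a $G$-sheaf of $\BF_p$-modules is just its stalk, an $\BF_pG$-module $V$, and $\Gamma_c^G(\mathcal V)=V^G$, so $H^*(G;V)=\rmR^*\big((-)^G\big)(V)=\operatorname{Ext}^*_{\BF_pG}(\BF_p,V)$, the standard group cohomology. For the cyclic group $\BZ_p$ this ring is classically known: $H^*(\BZ_p;\BF_2)=\BF_2[t]$ with $\deg t=1$ when $p=2$, and $H^*(\BZ_p;\BF_p)=\Lambda(x)\otimes\BF_p[\beta^1 x]$ with $\deg x=1$ when $p$ is odd, $\beta^1$ the Bockstein. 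Writing $G\isom(\BZ_p)^r$, the K\"unneth formula for group cohomology with coefficients in the field $\BF_p$ --- valid because the $\BF_p$-tensor product of projective resolutions of $\BF_p$ over the factor group algebras is a projective resolution over $\BF_pG$, so all $\Tor$-terms vanish --- gives an isomorphism of graded rings $H^*(G;\BF_p)\isom H^*(\BZ_p;\BF_p)^{\otimes r}$.

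For $p=2$ this isomorphism is precisely statement (a). For odd $p$ it identifies $H^*(G;\BF_p)$ with $\Lambda(x_1,\dots,x_r)\otimes\BF_p[y_1,\dots,y_r]$, where $\deg x_i=1$, $\deg y_i=2$ and $y_i=\beta^1 x_i$. The natural isomorphism $H^1(G;\BF_p)\isom\Hom(G,\BF_p)$ is the standard one for trivial coefficients (derivations into a trivial module are homomorphisms, and principal derivations vanish), and under it the classes $x_1,\dots,x_r$ form a basis. Since the Bockstein is a derivation compatible with the K\"unneth splitting, it maps $x_i$ to $y_i$; hence $\beta^1|_{H^1}$ is injective, its image is spanned by the polynomial generators, and the subalgebra of $H^*(G;\BF_p)$ generated by $H^1(G;\BF_p)$ and $\beta^1(H^1(G;\BF_p))$ --- which is the whole ring --- is freely $\Lambda^*\big(H^1(G;\BF_p)\big)\otimes S^*\big(\beta^1(H^1(G;\BF_p))\big)$. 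Taking the degree-$d$ component and using $H^1(G;\BF_p)\isom\Hom(G,\BF_p)\isom\beta^1(H^1(G;\BF_p))$ as $\BF_p$-vector spaces yields the displayed decomposition of $H^d(G;\BF_p)$, completing (b).

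Part (c) is then bookkeeping with Poincar\'e series. For $p=2$ the series of $\BF_2[t_1,\dots,t_r]$ is $(1-T)^{-r}=\sum_{d\ge0}\binom{d+r-1}{d}T^d$; for odd $p$ the series of $\Lambda(x_1,\dots,x_r)\otimes\BF_p[y_1,\dots,y_r]$ is $(1+T)^r(1-T^2)^{-r}=(1-T)^{-r}$, the same. So $\dim H^d(G;\BF_p)=\binom{d+r-1}{d}$ for all $p$, and $\binom{d+r-1}{d}=\binom{d+r-1}{r-1}$ is the symmetry of binomial coefficients. The recursion $\dim H^d(G;\BF_p)=\frac{d+1}{r+d}\dim H^{d+1}(G;\BF_p)$ follows from the elementary identity $\binom{d+r-1}{d}=\frac{d+1}{r+d}\binom{d+r}{d+1}$.

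For (d) I would use d\'evissage. As $G$ is a finite $p$-group and $\operatorname{char}\BF_p=p$, the group algebra $\BF_pG$ is local with residue field $\BF_p$, so the trivial module $\BF_p$ is its only simple module; hence any finite-dimensional $\BF_pG$-module $V$ admits a filtration $0=V_0\subset V_1\subset\dots\subset V_m=V$ with $m=\dim(V)$ and $V_i/V_{i-1}\isom\BF_p$. The long exact cohomology sequence of $0\to V_{i-1}\to V_i\to\BF_p\to 0$ gives $\dim H^d(G;V_i)\le\dim H^d(G;V_{i-1})+\dim H^d(G;\BF_p)$, and induction on $i$ yields $\dim H^d(G;V)\le\dim(V)\,\dim H^d(G;\BF_p)$. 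The only steps that are not purely formal are the identification of the two notions of group cohomology and the compatibility of the Bockstein with the K\"unneth splitting; both are standard (see, e.g., Cartan--Eilenberg, or Brown's \emph{Cohomology of Groups}), so I do not anticipate a serious obstacle here.
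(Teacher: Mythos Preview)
Your proposal is correct and follows essentially the same approach as the paper: both reduce to the known cohomology ring of $\BZ_p$, apply the K\"unneth formula to obtain (a), (b), (c), and prove (d) by filtering $V$ through trivial composition factors and using the long exact sequence. You supply more detail than the paper does---the identification of \fref{defn:Group-cohomology} with standard group cohomology, the Poincar\'e-series computation for (c), and the local-algebra justification of the filtration in (d)---but none of this constitutes a different route.
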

\begin{proof}
	The cohomology ring of $\BZ_p$ is described in
	\cite[IV-2.1(3)]{borel1960seminar}. Statements (a), (b), (c) for $G=\BZ_p^d$
	follow from the K\"unneth formula.
	
	To prove (d) we recall that any short exact sequence of $\BF_pG$-modules $0\to W'\to W\to W''\to 0$ gives rise to a long exact sequence
	\begin{align*}
	0\to H^0(G;W')&\to H^0(G;W)\to H^0(G;W'')\to \\
	&\to H^1(G;W')\to H^1(G;W)\to H^1(G;W'')\to\dots,
	\end{align*}
    which implies the inequality
    \[\dim H^i(G;W)\leq \dim H^i(G;W')+\dim H^i(G;W'').\]

    It is well-known that a finite $p$-group has only unipotent
    representations in characteristic $p$,
    hence $V$ has a filtration $V=V_{d}>V_{d-1}>\dots>V_{0}=0$
    such that the induced $G$-action on $\gr V$ is trivial. Then the previous inequality yields
     \begin{align*}\dim H^i(G;V_i)&\leq \dim H^i(G;V_{i-1})+\dim H^i(G;V_i/V_{i-1})\\&=\dim H^i(G;V_{i-1})+\dim H^i(G;\BF_p),
     \end{align*}
     and a simple induction completes the proof.
\end{proof}

Now we construct an equivariant analogue of the fundamental long exact cohomology sequence associated to a closed subspace of a space.

\begin{lem}\label{lem:injective_stalk}
	Let $G$ be a finite group, $\CI$ be an injective $G$-sheaf of $R$-modules over the $G$-space $X$. Then the stalk $\CI_x$ of $\CI$ at $x$ is an injective $RG_x$-module for any $x\in X$.
\end{lem}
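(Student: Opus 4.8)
The plan is to reduce the statement to the fact that the stalk functor at a point $x$ has an exact left adjoint on the category of $G$-sheaves, whence it preserves injectives. Concretely, for a point $x\in X$ with stabilizer $G_x=:H$, I would first produce a functor $\iota_x^!\colon (RH\text{-Mod})\to(G\text{-sheaves on }X)$ which is left adjoint to the composite ``take the stalk at $x$, regarded as an $RH$-module'' (the stalk $\CI_x$ carries an $H$-action because $H$ fixes $x$, which is exactly the hypothesis that makes the statement meaningful). The natural candidate is to take an $RH$-module $W$, form the skyscraper-type $G$-sheaf supported on the orbit $Gx$ whose stalk at $gx$ is $g\otimes_H W$, i.e. the $G$-sheaf $j_*(W\times_H G)$ where $j\colon Gx\hookrightarrow X$ is the inclusion of the orbit (a closed discrete subset, since $G$ is finite) — equivalently, induce $W$ from $H$ to $G$ as a $G$-set-indexed family and push forward. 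One checks directly from the definitions that $\Hom_{G\text{-Sh}(X)}(j_*(W\times_H G),\CA)\isom\Hom_{RH}(W,\CA_x)$ naturally in $W$ and in the $G$-sheaf $\CA$.

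The second step is to verify that this left adjoint $\iota_x^!$ is exact. Exactness can be checked on stalks, and the stalk of $j_*(W\times_H G)$ at a point $y$ is $0$ unless $y\in Gx$, in which case it is a direct sum of copies of $W$ permuted by $G$; since a short exact sequence of $RH$-modules $W'\to W\to W''$ induces a short exact sequence at each such stalk and $0\to0\to0$ elsewhere, $\iota_x^!$ is exact. (One should be mildly careful that $Gx$ really is closed and discrete in $X$ — this uses that $G$ is finite and that orbits of finite group actions on Hausdorff spaces, or at least on the spaces relevant here, are finite hence closed — so that $j_*$ is exact and the stalk computation is as claimed.)

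Having an exact left adjoint, the right adjoint — here, the stalk-at-$x$ functor $\CA\mapsto\CA_x$ from $G$-sheaves to $RH$-modules — automatically sends injective objects to injective objects: if $\CI$ is an injective $G$-sheaf, then $\Hom_{RH}(-,\CI_x)\isom\Hom_{G\text{-Sh}(X)}(\iota_x^!(-),\CI)$ is a composite of the exact functor $\iota_x^!$ with the exact functor $\Hom_{G\text{-Sh}(X)}(-,\CI)$, hence exact, so $\CI_x$ is an injective $RG_x$-module.

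The only real obstacle is pinning down the adjunction precisely — that is, giving a clean description of the functor $\iota_x^!$ as a $G$-sheaf (getting the $G$-equivariant structure on the induced skyscraper sheaf exactly right, including the module isomorphisms $\tau_g$ between stalks) and checking the adjunction isomorphism is natural in both variables. Everything after that is a formal consequence of the exact-left-adjoint/injective-right-adjoint principle, so the writeup should be short once the adjoint pair is set up.
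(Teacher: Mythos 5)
There is a genuine gap at the central step of your plan: the adjunction you posit runs in the wrong direction. For the inclusion $j$ of the closed orbit $Gx$, the functor $W\mapsto j_*(W\times_{G_x}G)$ is a \emph{right} adjoint --- namely of restriction to the orbit, which on a discrete closed orbit is the stalk functor --- and not a left adjoint of it. Indeed, a morphism of sheaves from a skyscraper supported at a closed point $x$ into $\CA$ amounts to a compatible system of maps $W\to\CA(U)$ over \emph{all} neighbourhoods $U$ of $x$, i.e.\ a map into $\varprojlim_{U\ni x}\CA(U)$, not into the stalk $\varinjlim_{U\ni x}\CA(U)$. Already with $G$ trivial this breaks: on $X=[0,1]$ with $x=\tfrac12$, take $\CA$ to be the extension by zero of the constant sheaf $\ZZ$ on a small open interval around $x$; then $\CA_x=\ZZ$ while every map from the skyscraper $i_*\ZZ$ to $\CA$ vanishes (its component over $X$ must land in $\Gamma(X,\CA)=0$ and determines all the others), so $\Hom(i_*\ZZ,\CA)=0\ne\Hom(\ZZ,\CA_x)$. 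In fact the stalk functor at a non-open point has no left adjoint at all, since it does not commute with infinite products. The adjunction that does hold, stalk $\dashv$ skyscraper with the stalk functor exact, gives only that skyscrapers on injective modules are injective $G$-sheaves; by the exact-adjoint formalism a right adjoint of the stalk functor would let you conclude that stalks preserve \emph{projectives}, which is not what you need.

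The correctly oriented adjunction is exactly what the paper exploits, but in the opposite logical role: by Grothendieck's construction of enough injectives, $\CI$ embeds into a product $\CJ=\prod_{x\in X}\CJ(x)$ of skyscraper $G$-sheaves whose values are injective $RG_x$-modules (these sheaves are injective precisely because skyscraper is right adjoint to the exact stalk functor); injectivity of $\CI$ then forces it to be a direct summand of $\CJ$, and passing to stalks at $x$ exhibits $\CI_x$ as a direct summand of an injective $RG_x$-module. If you want to stay closer to your adjoint-functor strategy, the fixable version is to observe that for each $G$-invariant open $U\ni x$ the functor $\CA\mapsto\CA(U)$ does have an exact left adjoint (extension by zero of a suitable induced constant sheaf), so $\CI(U)$ is injective, and $\CI_x$ is a filtered colimit of injective modules --- but that last step requires $RG_x$ to be Noetherian, an extra hypothesis the paper's argument avoids.
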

\begin{proof}
  By \cite[Proposition~5.1.2]{Grothendieck} we can embed $\CI$ into an
  injective $G$-sheaf of the form $\CJ=\prod_{x\in X}\CJ(x)$, where each
  $\CJ(x)$ is a skyscraper sheaf at $x$ with values in some injective $RG_x$-module.
  Since $\CI$ is injective, it must be a direct summand, hence the
  stalk $\CI_x$ is a direct summand of $\CJ_x$. Therefore $\CI_x$ is
  injective as an $RG_x$-module.
\end{proof}

\begin{defn}[{\cite[I-2.6]{bredon2012sheaf}}]
Let $A\subseteq X$ be a locally closed subset of $X$, $\CA$ a sheaf of $R$-modules on $A$. The \emph{extension $\CA^X$ of $\CA$ onto $X$ by zero} is the sheaf on $X$ determined uniquely by the conditions that its restriction onto $A$ is the sheaf $\CA$, while its restriction onto ${X\setminus A}$ is the $0$ sheaf on $X\setminus A$.
\end{defn}
 If $X$ is a $G$-space and $A$ is $G$-invariant, then  extension by 0 is an exact functor from the category of $G$-sheaves of $R$-modules on $A$ to the category of $G$-sheaves of $R$-modules on $X$.
\begin{lem}\label{lem:effaceable}
	Let $G$ be a finite group, $X$ be a locally compact  Hausdorff $G$-space, $A\subseteq X$ a  $G$-invariant closed or open subset, $\CI$ an injective $G$-sheaf on $A$. Then $H^n_{G,c}(X,\CI^X)=0$ for $n>0$.
\end{lem}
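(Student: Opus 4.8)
The plan is to reduce the statement to the case where $\CI$ is a skyscraper-type injective $G$-sheaf of the form used in \fref{lem:injective_stalk}, and then compute $H^*_{G,c}(X,\CI^X)$ directly. First I would invoke \cite[Proposition~5.1.2]{Grothendieck} to embed $\CI$ into an injective $G$-sheaf $\CJ=\prod_{a\in A}\CJ(a)$, where $\CJ(a)$ is the skyscraper sheaf at $a$ with stalk an injective $RG_a$-module $Q_a$; since $\CI$ is injective it is a direct summand of $\CJ$, hence $\CI^X$ is a direct summand of $\CJ^X$, and it suffices to prove the vanishing for $\CJ^X$. Because $H^*_{G,c}$ commutes with the relevant products here (products of skyscrapers supported at the distinct orbits), and because extension by zero is exact and commutes with this product, we are reduced to the case of a single skyscraper sheaf: $\CI=\CJ(a)$ for one point $a\in A$, with stalk an injective $RG_a$-module $Q$. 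Note $\CJ(a)^X$ is just the skyscraper sheaf on $X$ at $a$ with the same stalk, so the hypothesis on $A$ (open or closed) is only used to guarantee that extension by zero is exact, which we have already quoted.

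Next I would identify $H^*_{G,c}(X,\CS)$ for a skyscraper $G$-sheaf $\CS$ supported on the orbit $Ga$ with stalk $Q$ at $a$. The key point is a change-of-groups reduction: the $G$-equivariant sections with compact support of a sheaf supported on the single orbit $Ga\cong G/G_a$ are canonically identified with $Q$ itself (an equivariant section is determined by its value at $a$, which must be $G_a$-invariant — but here we want the whole derived functor, so more care is needed). Concretely, I would show that the functor $\CA\mapsto \Gamma^G_c(X,\CA^X)$ on $G_a$-sheaves-on-a-point, i.e. $RG_a$-modules, composed with the skyscraper-at-$a$-then-extend construction, is naturally isomorphic to the fixed-point functor $M\mapsto M^{G_a}$ on $RG_a$-modules, and that this construction sends injective $RG_a$-modules to $\Gamma^G_c$-acyclic $G$-sheaves. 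Then $H^n_{G,c}(X,\CJ(a)^X)=H^n(G_a;Q)$, which vanishes for $n>0$ because $Q$ is an injective $RG_a$-module and group cohomology is computed by an injective (equivalently, for finite groups, relatively injective / coinduced-built) resolution — an injective module has no higher cohomology. Assembling the pieces via the direct-sum/product decomposition finishes the proof.

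The main obstacle I anticipate is the bookkeeping in the change-of-groups step: making precise that the skyscraper $G$-sheaf on $X$ at $a$ with injective stalk $Q$ is acyclic for $\Gamma^G_c$, i.e. that its higher equivariant cohomology vanishes, rather than merely computing $\Gamma^G_c$ of it. The cleanest route is probably to factor $\Gamma^G_c(X,-)$ through $\Gamma^{G_a}(\{a\},-)$ by a Shapiro-type lemma for equivariant sheaf cohomology with compact support (the orbit $Ga$ being closed in $X$ when $G$ is finite, so compact support on the orbit is automatic and the "extend by zero from the orbit" functor is exact and preserves injectives), and then use that $\Gamma^{G_a}$ of an injective $RG_a$-module skyscraper has no higher cohomology by definition of group cohomology together with \fref{lem:injective_stalk}. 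One subtlety to handle carefully: $\prod$ versus $\bigoplus$ over the points of $A$, and whether $A$ is discrete in $X$ — in general it is not, so the product $\CJ=\prod_{a\in A}\CJ(a)$ must be grouped by $G$-orbits and one must check that extension by zero and $\Gamma^G_c$ interact correctly with this (possibly infinite) product of skyscrapers supported on a locally closed set; local compactness of $X$ and the closed/open hypothesis on $A$ are exactly what is needed to control this.
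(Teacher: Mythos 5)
The single-orbit computation at the end of your plan is sound (a $G$-skyscraper supported on the closed orbit $Ga$ with injective $RG_a$-stalk $Q$ has $H^n_{G,c}=H^n(G_a;Q)=0$ for $n>0$), but the reduction to it has a genuine gap that I do not think can be repaired along the lines you sketch. The embedding $\CI\hookrightarrow\CJ=\prod_{a\in A}\CJ(a)$ and the direct-summand argument reduce the lemma to the vanishing of $H^n_{G,c}(X,\CJ^X)$, not to the vanishing for each individual factor, and passing from the factors to the product requires two commutation statements, both of which fail. First, for $A$ open (the only nontrivial case: for $A$ closed, $(\cdot)^X$ is right adjoint to the exact restriction, so $\CI^X$ is already injective and acyclic), extension by zero does not commute with the infinite product: at a boundary point $x\in\overline A\setminus A$ the stalk of $\bigl(\prod_a\CJ(a)\bigr)^X$ is $0$ by definition, whereas the stalk of $\prod_a\bigl(\CJ(a)^X\bigr)$ is $\varinjlim_{U\ni x}\prod_{a\in U\cap A}Q_a$, which is typically nonzero. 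Second, $\Gamma^G_c$ does not commute with infinite products of sheaves: a compactly supported section of $\prod_a\CJ(a)^X$ is a tuple $(s_a)$ with $\overline{\{a\mid s_a\ne0\}}$ compact, which differs from $\prod_a\Gamma^G_c\bigl(\CJ(a)^X\bigr)$, so even degree-by-degree the cohomology of the product is not the product of the cohomologies. You flag this as a ``subtlety to handle carefully,'' but it is where the whole content of the lemma sits; note that your argument, were it to go through, would prove the statement for an arbitrary subset $A$, since a single point is always closed.

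The skyscraper embedding should be used only stalkwise, which is exactly what \fref{lem:injective_stalk} extracts from it: each stalk $\CI_x$ is an injective $RG_x$-module. The globalization is then achieved not by decomposing $\CI$ but by the Grothendieck spectral sequence for the quotient $f\colon X\to X/G$, with $E_2$-page $H^i_c\bigl(X/G,\rmR^jf^G_*(\CI^X)\bigr)$. The stalk of $\rmR^jf^G_*(\CI^X)$ at $f(x)$ is $H^j(G_x,\CI^X_x)$, which vanishes for $j>0$ by stalkwise injectivity (this is where your Shapiro-type computation really lives), so the sequence collapses to $H^n_c\bigl(X/G,f^G_*(\CI)^{X/G}\bigr)\isom H^n_c\bigl(f(A),f^G_*\CI\bigr)$ --- and it is here, not in the exactness of extension by zero (which holds for any locally closed subset), that the hypothesis that $A$ be open or closed is used. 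The remaining vanishing follows because $f^G_*\CI$ is flabby.
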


\begin{proof} Let $f\colon X\to Y=X/G$ be the quotient map.
  By \cite[Section 5.7]{Grothendieck}, there is a spectral sequence $I_t^{i,j}\Longrightarrow H^{i+j}_{G,c}(X,\CI^X)$ with second page
\[
I_2^{i,j}=H^i_c(Y,\rmR^jf_*^G(\CI^X)).
\]
By \cite[Theorem 5.3.1]{Grothendieck} $\rmR^jf_*^G(\CI^X)$ is a sheaf over $Y$, the stalk of which at $y=f(x)$ is $H^j(G_x,\CI^X_x)$. Since $\CI^X_x=\CI_x$ is injective for $x\in A$ by \fref{lem:injective_stalk}, and $\CI^X_x=0$ for $x\notin A$,  $\rmR^jf_*^G(\CI^X)$ is the $0$ sheaf for $j>0$. This implies that the spectral sequence degenerates at the second page and
\begin{equation*}
H^n_{G,c}(X,\CI^X)=H^n_c(Y,f_*^G(\CI^X))=H^n_c(Y,f_*^G(\CI)^Y).
\end{equation*}
As $A$ is $G$-invariant and either open or closed in $X$,
$B=f(A)$ is open or closed in $Y$. Thus, according to  \cite[Theorem II-10.1]{bredon2012sheaf}, we have
\[
H^n_c(Y,f_*^G(\CI)^Y)\isom H^n_c(B,f_*^G(\CI)).
\]
As $\CI$ is injective, $f_*^G(\CI)$ is flabby by \cite[Proposition 5.1.3 and its Corollary]{Grothendieck}, which implies $H^n_c(B,f_*^G(\CI))=0$ for $n>0$.
\end{proof}

\begin{lem}\label{lem:long-exact-sequence-isomorphism}
	Let $G$ be a finite group, $X$ be a locally compact  Hausdorff $G$-space, $A\subseteq X$ a $G$-invariant closed or open subset. Then for any $G$-sheaf of $R$-modules on $A$, we have a natural isomorphism
\[
H^*_{G,c}(X,\CA^X)\isom H^*_{G,c}(A,\CA).
\]
\end{lem}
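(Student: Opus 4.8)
The plan is to deduce this from the two preceding lemmas by the standard derived-functor argument: compare $\Gamma_c^G(A,-)$ with $\Gamma_c^G(X,(-)^X)$. First I would observe that the functor $\CA\mapsto\CA^X$ (extension by zero) from $G$-sheaves on $A$ to $G$-sheaves on $X$ is exact, and that on global sections with compact support it induces a natural isomorphism $\Gamma_c^G(A,\CA)\isom\Gamma_c^G(X,\CA^X)$. For $A$ closed this is because a section of $\CA^X$ supported in the closed set $A$ is the same thing as a section of $\CA$ on $A$, and compact support is unaffected; for $A$ open it is because a compactly supported section on the open set $A$ extends by zero to a compactly supported section on $X$, and conversely the support of a section of $\CA^X$ is automatically contained in $A$ (as the stalks vanish off $A$). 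Thus we have an equality of functors $\Gamma_c^G(X,-)\circ(-)^X = \Gamma_c^G(A,-)$.

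Next I would feed this into the Grothendieck/derived-functor machinery. Since $(-)^X$ is exact, for any $G$-sheaf $\CA$ on $A$ we may take an injective resolution $0\to\CA\to\CI^\bullet$ in the category of $G$-sheaves on $A$ and apply $(-)^X$ to obtain an exact complex $0\to\CA^X\to(\CI^\bullet)^X$ on $X$. By Lemma~\ref{lem:effaceable}, each $(\CI^j)^X$ satisfies $H^n_{G,c}(X,(\CI^j)^X)=0$ for $n>0$, so the complex $(\CI^\bullet)^X$ is acyclic for the functor $\Gamma_c^G(X,-)$, i.e. it computes $H^*_{G,c}(X,\CA^X)$. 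But applying $\Gamma_c^G(X,-)$ to $(\CI^\bullet)^X$ gives, by the identification of functors above, exactly the complex $\Gamma_c^G(A,\CI^\bullet)$, whose cohomology is $H^*_{G,c}(A,\CA)$ by definition. This yields the asserted isomorphism, and naturality in $\CA$ is automatic from the functoriality of injective resolutions (or one can simply invoke the uniqueness-up-to-isomorphism of $\delta$-functors).

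I expect the only genuinely delicate point to be the behaviour of \emph{compact supports} under extension by zero, and checking that it is compatible with the $G$-equivariant structure — one must be careful that ``$G$-equivariant section with compact support on $A$'' and ``$G$-equivariant section with compact support on $X$ of the extended sheaf'' really do agree as $R$-modules in both the open and closed cases. This is the kind of verification that is routine once set up correctly (it is essentially \cite[I-2.9, II-10.1]{bredon2012sheaf} with an added $G$-action, which acts trivially on the support considerations since $A$ is $G$-invariant), but it is where all the content of the lemma sits; everything after that is the formal acyclic-resolution argument using Lemma~\ref{lem:effaceable}.
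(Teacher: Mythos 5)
Your proposal is correct and follows essentially the same route as the paper: both arguments pivot on the degree-zero identification $\Gamma_c^G(A,-)\isom\Gamma_c^G(X,(-)^X)$ and on Lemma~\ref{lem:effaceable}, the only difference being that you phrase the final formal step via acyclic resolutions while the paper invokes the universality of effaceable $\partial$-functors (Grothendieck's Proposition~2.2.1) — two standard packagings of the same homological-algebra fact.
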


\begin{proof}
The functors $H^*_{G,c}(A,-)$ and $H^*_{G,c}(X,(-)^X)$ are cohomological $\partial$-functors in the sense of \cite[Section 2.1]{Grothendieck} with naturally isomorphic functors in degree $0$
\[
H^0_{G,c}(A,-) \isom \Gamma_c^G(-)= \Gamma_c(-)^G\isom \Gamma_c^G((-)^X) \isom H^0_{G,c}(X,(-)^X).
\]
Since $H^*_{G,c}(A,-)$ consists of the right derived functors of the left exact functor $\Gamma_c^G(-)$, it is a universal $\partial$-functor. On the other hand, \fref{lem:effaceable} together with \cite[Proposition 2.2.1]{Grothendieck} imply that $H^*_{G,c}(X,(-)^X)$ is also a universal $\partial$-functor, hence it must be naturally isomorphic to $H^*_{G,c}(A,-)$.
\end{proof}

\begin{prop} \label{prop:long-exact-sequence-of-closed-subset}
	Let $G$ be a finite group, $\mathcal A$ a $G$-sheaf on a locally compact Hausdorff $G$-space $X$, $F\subseteq X$ a $G$-invariant closed subset of $X$, $U=X\setminus F$. Then we have a long exact sequence
	$$
	\cdots
	H_{G,c}^d\big(U;\mathcal A|_U\big)\to
	H_{G,c}^d\big(X;\mathcal A\big)\to
	H_{G,c}^d\big(F;\mathcal A|_F\big)\to
	H_{G,c}^{d+1}\big(U;\mathcal A|_U\big)\cdots
	$$	
\end{prop}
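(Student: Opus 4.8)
The plan is to derive this from the short exact sequence of sheaves on $X$ associated to a closed subset together with the long-exact-sequence isomorphism of \fref{lem:long-exact-sequence-isomorphism}. Concretely, for the $G$-invariant closed subset $F\subseteq X$ with open complement $U=X\setminus F$, restriction and extension-by-zero give a short exact sequence of $G$-sheaves on $X$
\[
0\to (\mathcal A|_U)^X\to \mathcal A\to (\mathcal A|_F)^X\to 0,
\]
where the first map is the natural inclusion of the extension by zero of $\mathcal A|_U$, and the second is restriction to $F$ (followed by extension by zero, which here is just the identity since $F$ is closed). Exactness can be checked on stalks: at a point of $U$ the sequence reads $0\to\mathcal A_x\to\mathcal A_x\to 0\to 0$, and at a point of $F$ it reads $0\to 0\to\mathcal A_x\to\mathcal A_x\to 0$. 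This uses that extension by zero is an exact functor on $G$-sheaves, as recorded in the excerpt after the definition of $\CA^X$.

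Next I would apply the long exact cohomology sequence of the functor $H^*_{G,c}(X;-)$ — these are the derived functors of the left exact functor $\Gamma^G_c$, so any short exact sequence of $G$-sheaves on $X$ yields a long exact sequence
\[
\cdots\to H^d_{G,c}\big(X;(\mathcal A|_U)^X\big)\to H^d_{G,c}(X;\mathcal A)\to H^d_{G,c}\big(X;(\mathcal A|_F)^X\big)\to H^{d+1}_{G,c}\big(X;(\mathcal A|_U)^X\big)\to\cdots.
\]
Here I should note that $X$ is locally compact Hausdorff, $U$ is $G$-invariant open and $F$ is $G$-invariant closed, so \fref{lem:long-exact-sequence-isomorphism} applies to both and gives natural isomorphisms $H^*_{G,c}(X;(\mathcal A|_U)^X)\isom H^*_{G,c}(U;\mathcal A|_U)$ and $H^*_{G,c}(X;(\mathcal A|_F)^X)\isom H^*_{G,c}(F;\mathcal A|_F)$. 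Substituting these into the long exact sequence above produces exactly the asserted sequence.

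The one point requiring a little care — and the step I expect to be the main (minor) obstacle — is the naturality/compatibility bookkeeping: one must check that the maps $H^d_{G,c}(U;\mathcal A|_U)\to H^d_{G,c}(X;\mathcal A)\to H^d_{G,c}(F;\mathcal A|_F)$ obtained after the identifications are the ones induced by extension by zero and restriction, and that the connecting homomorphism is natural. Since the isomorphisms of \fref{lem:long-exact-sequence-isomorphism} are isomorphisms of $\partial$-functors and the maps in the short exact sequence of sheaves are themselves the canonical extension/restriction morphisms, this compatibility is automatic from the uniqueness clause for universal $\partial$-functors; I would simply remark this rather than belabor it. Everything is functorial for $G$-equivariant open embeddings because each ingredient — the short exact sequence of sheaves, the derived-functor long exact sequence, and the isomorphism of \fref{lem:long-exact-sequence-isomorphism} — is, which is what the downstream \fref{prop:long-exact-sequence-of-closed-subset-cor} will need.
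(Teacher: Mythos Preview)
Your argument is correct and follows essentially the same route as the paper: form the short exact sequence $0\to(\mathcal A|_U)^X\to\mathcal A\to(\mathcal A|_F)^X\to0$ of $G$-sheaves on $X$, take the associated long exact sequence of $H^*_{G,c}(X;-)$, and then invoke \fref{lem:long-exact-sequence-isomorphism} to replace $H^*_{G,c}(X;(\mathcal A|_U)^X)$ and $H^*_{G,c}(X;(\mathcal A|_F)^X)$ by $H^*_{G,c}(U;\mathcal A|_U)$ and $H^*_{G,c}(F;\mathcal A|_F)$ respectively. Your additional stalk check and naturality remarks are fine elaborations but not required beyond what the paper states.
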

\begin{proof}
	The sheaves $\CA_U=(\CA|_U)^X$ and $\CA_F=(\CA|_F)^X$ can be included in a short exact sequence $0\to\mathcal A_U\to\mathcal A\to\mathcal A_F\to 0$ of $G$-sheaves on $X$, from which the cohomology functor $H^*_{G,c}$ produces a long exact sequence
	\[
	\cdots
	H_{G,c}^d\big(X;\mathcal A_U\big)\to
	H_{G,c}^d\big(X;\mathcal A\big)\to
	H_{G,c}^d\big(X;\mathcal A_F\big)\to
	H_{G,c}^{d+1}\big(X;\mathcal A_U\big)\cdots
	\]	
	in a natural way. The modules $H_{G,c}^i\big(X;\mathcal A_U\big)$ and $H_{G,c}^i\big(X;\mathcal A_F\big)$ in this sequence can be replaced by the isomorphic modules $H_{G,c}^i\big(U;\mathcal A|_U\big)$ and $H_{G,c}^i\big(F;\mathcal A|_F\big)$, respectively, as a consequence of \fref{lem:long-exact-sequence-isomorphism}.
\end{proof}	

\begin{proof}[Proof of \fref{prop:long-exact-sequence-of-closed-subset-cor}]
	First note that the statement makes sense as by \fref{prop:Smith-Floyd}
	the connected components of $F$ are {\cm}'s, so $F$ has
	an orientation sheaf, and all appearing orientation sheaves have a natural $G$-sheaf structure by \fref{prop:G-structure on orientaion_sheaf}.
	Applying \fref{prop:long-exact-sequence-of-closed-subset} to the $G$-sheaf $\CA=\Orientation[M]$ we obtain our claim	since $\Orientation[M]\big|_U=\Orientation[U]$,
	and, by \fref{prop:Smith-Floyd}, $\Orientation[M]\big|_F\isom\Orientation[F]$.
\end{proof}

\begin{prop}\label{prop:free_cross_trivial_action}
	Let $X$ be a Hausdorff space,
	$K$ a finite group acting continuously and freely on $X$,
        and denote by $f\colon X\to Y=X/K$ the quotient map.
        Let $\CA$ be a $K$-sheaf of $\BF$-vector spaces on $X$, where
        $\BF$ is a field.
	Let $L$ be another group
	and $\CM$ be an $\BF L$-module.
	Let $L$  act trivially both on $X$ and $\CA$. Then $G=K\times L$ acts naturally on $X$
	and on the sheaf $\CA\otimes_\BF\CM$, and for all $n\ge 0$, there are isomorphisms
	$$
	H^n_{G,c}\big(X;\CA\otimes_\BF\CM\big)
	\isom
	\bigoplus_{p+q=n}
	H^p_c\big(Y;f_*^{K}\CA\big) \otimes_\BF H^q\big(L;\CM\big),
	$$
	which are natural transformations with respect to
	both $\CA$ and $\CM$.
\end{prop}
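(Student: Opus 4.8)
The plan is to reduce the computation of $H^*_{G,c}(X;\CA\otimes_\BF\CM)$ to a combination of the sheaf cohomology of $Y$ and the group cohomology of $L$ by exhibiting an explicit resolution and exploiting the product structure $G=K\times L$. First I would choose a resolution of the trivial $\BF L$-module $\BF$ by free (or projective) $\BF L$-modules, say $P_\bullet\to\BF$; tensoring over $\BF$ with $\CM$ gives a complex $P_\bullet\otimes_\BF\CM$ of $\BF L$-modules whose cohomology is $H^*(L;\CM)$, since $\BF$ is a field and all the $P_i\otimes_\BF\CM$ are (by freeness of $P_i$) suitable to compute group cohomology. The point is to feed this through the equivariant section functor. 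Because $K$ acts freely on $X$, the functor $\Gamma^K_c(-)$ on $K$-sheaves of $\BF$-vector spaces is essentially $\Gamma_c(Y;f^K_*(-))$, and $f^K_*$ is exact on $K$-sheaves when the action is free; more precisely $\Gamma^{K}_c(\CA) = \Gamma_c(Y;f^K_*\CA)$ and the higher derived functors of $\Gamma^K_c$ on $K$-sheaves agree with $H^*_c(Y;f^K_*\CA)$ — this is exactly the content of \fref{prop:free-action-cohomology-cor}\fref{item:8} at the sheaf level, whose proof machinery (Grothendieck's spectral sequence $I_2^{i,j}=H^i_c(Y;\rmR^jf^K_*(\CA))$ with $\rmR^jf^K_*=0$ for $j>0$ when $K_x=\{1\}$) I would reuse.

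The key steps, in order: (1) Set up Grothendieck's spectral sequence $\BE_2^{i,j}=H^i\big(G; H^j_c(X;\CA\otimes_\BF\CM)\big)\Rightarrow H^{i+j}_{G,c}(X;\CA\otimes_\BF\CM)$ — this is the statement of \fref{prop:Leray-spectral-sequence} applied to the $G$-sheaf $\CA\otimes_\BF\CM$. (2) Identify the $E_2$ page: since $L$ acts trivially on $X$ and on $\CA$, as $\BF L$-module $H^j_c(X;\CA\otimes_\BF\CM)\cong H^j_c(X;\CA)\otimes_\BF\CM$ with $L$ acting only on the $\CM$-factor, so $H^i(G;-)=H^i(K\times L;-)$; using the Künneth formula for group cohomology over a field, $H^i(K\times L; H^j_c(X;\CA)\otimes_\BF\CM)\cong\bigoplus_{a+b=i}H^a(K;H^j_c(X;\CA))\otimes_\BF H^b(L;\CM)$, where in the $K$-factor the coefficients $H^j_c(X;\CA)$ carry the natural $K$-action. (3) Use freeness of the $K$-action to collapse the $K$-direction: for a free action, $H^a_{K,c}(X;\CA)\cong H^a_c(Y;f^K_*\CA)$ and the spectral sequence $\BE_2^{a,j}=H^a(K;H^j_c(X;\CA))\Rightarrow H^{a+j}_{K,c}(X;\CA)$ together with the analogous degeneration forces the combined double (triple) complex to reassemble. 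Rather than wrangle two spectral sequences, I would instead prefer the cleaner route: take a $G$-equivariant injective resolution $\CA\otimes_\BF\CM\to\CI^\bullet$, observe $\Gamma^G_c(\CI^\bullet)=\Gamma^K_c(\CI^\bullet)^L$, and compute $\Gamma^K_c$ first (which is exact up to $H^*_c(Y;f^K_*-)$ since $K$ acts freely) getting a complex of $\BF L$-modules computing $H^*_c(Y;f^K_*(\CA\otimes\CM))=H^*_c(Y;f^K_*\CA\otimes_\BF\CM)$ with its residual $L$-action on the $\CM$ factor; then take $L$-invariants-derived, i.e. apply $H^*(L;-)$, and invoke the Künneth/universal-coefficient splitting over the field $\BF$ to get $\bigoplus_{p+q=n}H^p_c(Y;f^K_*\CA)\otimes_\BF H^q(L;\CM)$.

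The main obstacle I expect is justifying the degeneration and the splitting cleanly at the sheaf-and-complex level rather than just on associated graded pieces: one must check that computing $\Gamma^K_c$ of a $G$-injective resolution really does yield a complex whose cohomology is $H^*_c(Y;f^K_*\CA\otimes_\BF\CM)$ \emph{as $\BF L$-modules}, i.e. that the residual $L$-action is the obvious one, and that the resulting complex of $\BF L$-modules is a complex of modules on which $H^*(L;-)$ can be computed by a Künneth splitting (this needs that each term is a direct sum of copies of $\CM$, or at least an $\BF L$-module for which $\operatorname{Tor}$ over $\BF$ vanishes — automatic since $\BF$ is a field, so the Künneth theorem for the complex $\Gamma^K_c(\CI^\bullet)$ versus $H^*(L;-)$ holds). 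Concretely the safe argument is: combine the two Grothendieck spectral sequences (the $G$-one and the $K$-one), use that $K$ acts freely so the $K$-level higher direct images vanish, reducing $\BE_2$ of the $G$-spectral sequence to $H^i(L; H^j_c(Y;f^K_*\CA)\otimes_\BF\CM)$, and then note this equals $\bigoplus_{a+b=i\,?}$ — wait, $L$ acts trivially on $Y$, so actually $\BE_2^{i,j}=H^i(L;\CM)\otimes_\BF H^j_c(Y;f^K_*\CA)$, and since $L$ acts trivially on the $H^j_c(Y;f^K_*\CA)$ factor the spectral sequence is a tensor product of a degenerate one, forcing $d_2=0$ and all higher differentials to vanish, yielding the claimed direct sum decomposition; naturality in $\CA$ and $\CM$ is inherited from naturality of all the spectral sequences and isomorphisms used. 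I would present this last route as the actual proof, since it sidesteps any delicate bookkeeping with injective resolutions.
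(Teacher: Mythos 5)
Your reduction of the problem is fine: passing from $H^*_{G,c}(X;\CA\otimes_\BF\CM)$ to the $L$-equivariant cohomology of $Y$ with coefficients in $\CF\otimes_\BF\CM$, where $\CF=f_*^K\CA$, using that $K$ acts freely (vanishing of the higher $K$-level direct images), is exactly how the paper starts. The gap is in the step you actually designate as the proof: you assert that because the $E_2$ page has the form $\BE_2^{i,j}\isom H^i(L;\CM)\otimes_\BF H^j_c(Y;\CF)$, ``the spectral sequence is a tensor product of a degenerate one, forcing $d_2=0$ and all higher differentials to vanish.'' A bigraded vector space of product form does not force degeneration -- nontrivial spectral sequences routinely have $E_2$ pages that factor as tensor products -- and in this situation the degeneration is essentially \emph{equivalent} to the statement being proved (comparing total dimensions of $E_2$ and of the asserted abutment is circular). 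Some extra input is needed, e.g.\ multiplicative structure plus freeness of $H^*(L;\CM)$ over $H^*(L;\BF)$, which you do not have for a general module $\CM$. Moreover, even granting degeneration, $E_\infty=E_2$ only identifies the associated graded of a filtration on $H^n_{G,c}$; over a field this splits, but not canonically, so the claimed naturality in $\CA$ and $\CM$ does not follow ``from naturality of the spectral sequences'' -- and naturality is precisely what the later applications of this proposition use.

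The paper closes exactly this gap without any degeneration argument: after the reduction $H^*_{G,c}(X;\CA\otimes_\BF\CM)\isom H^*_{L,c}(Y;\CF\otimes_\BF\CM)$ (via the equivalence of categories induced by $f_*^K$), it proves the acyclicity statement $H^n_{L,c}(Y;\CI\otimes_\BF\CJ)=0$ for $n>0$ whenever $\CI$ is an injective sheaf on $Y$ and $\CJ$ an injective $\BF L$-module (using \cite[Theorem~5.3.1]{Grothendieck} and the spectral sequence of \cite[Equation~(5.7.4)]{Grothendieck}), so that $\scrI^\bullet\otimes_\BF\scrJ^\bullet$ is a $\Gamma^L_c$-acyclic resolution of $\CF\otimes_\BF\CM$; the K\"unneth formula for complexes of vector spaces then gives the direct-sum decomposition \emph{and} its naturality in one stroke. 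Your alternative ``cleaner route'' (apply $\Gamma^K_c$ to a $G$-injective resolution, then derive $(-)^L$) is close in spirit, but it needs the same kind of acyclicity verification to interchange the two derived functors and to apply K\"unneth; as written, your proposal defers that verification to the spectral-sequence degeneration claim, which is the missing piece.
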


\begin{proof}
        For simplicity we introduce the notation $\CF=f_*^K(\mathcal A)$.
	As $K$ acts freely and $L$ acts trivially on the Hausdorff space $X$, the functor $f_*^K$ is an equivalence between the category of $G$-sheaves of
	$\BF$-vector spaces on $X$ and the category of $L$-sheaves of $\BF$-vector spaces on $Y$. Hence we have a natural isomorphism
	\[
	H^*_{G,c}(X;\CB)=\rmR^*\Gamma_{c}^G(\CB)\isom \rmR^*\Gamma^K_{c}(f_{*}^K(\CB))=H^*_{L,c}(Y;f_{*}^K(\CB))
	\]
	for any $G$-sheaf $\CB$ of $\BF$-vector spaces on $X$. In particular,
	\begin{equation}
          \label{eq:2}
          H^*_{G,c}(X;\CA\otimes_\BF\CM)\isom H^*_{L,c}(Y;\CF\otimes_\BF\CM).
        \end{equation}
	Let $\CI$ be an injective sheaf of $\BF$-vector spaces on $Y$ (with trivial action of $L$ on $\CI$),
	and $\CJ$ an injective $\BF L$-module.
	Let $\iota\colon Y\to Y$ be the identity map.
	By \cite[Theorem~5.3.1]{Grothendieck},
	at each point $y\in Y$ we have
	\[
	\big(\rmR^j\iota^L_*(\CI\otimes_\BF\CJ)\big)_y = H^j(L;\CI_y\otimes_\BF \CJ) = \CI_y\otimes_\BF H^j(L;\CJ)=0
    \quad
	\text{for all }\ j>0.
	\]
	This implies that
	\[
	\rmR^j\iota^L_*(\CI\otimes_\BF\CJ) =
	\begin{cases}
	\CI \otimes_\BF \CJ^L &\text{for }\ j=0, \\
	0 &\text{for }\ j>0.
	\end{cases}
	\]
	Since tensor product with the vector space $\CJ^L$
	commutes with sheaf cohomology,
	\[
	H_{c}^i(Y; \CI\otimes_\BF\CJ^L ) \isom
	H_{c}^i(Y; \CI) \otimes_\BF\CJ^L =0
	\quad\quad
	\text{for all }\ i>0.
	\]
	Combining with the previous formula we obtain that
	\[
	H_{c}^i\big( Y;\rmR^j\iota^L_*(\CI\otimes_\BF\CJ )\big) =0
	\quad\quad
	\text{whenever }\ i+j>0.
	\]
	By \cite[Equation~(5.7.4)]{Grothendieck}, there is a spectral sequence
	\[
	I_t^{i,j}\Longrightarrow H_{L,c}^{i+j}(Y; \CI\otimes_\BF\CJ),\quad\quad I_2^{i,j}=	H_{c}^i\big( Y;\rmR^j\iota^L_*(\CI\otimes_\BF\CJ)\big),
	\]
which implies
	\[
	H_{L,c}^n( Y; \CI\otimes_\BF\CJ ) = 0
	\quad\quad
	\text{whenever }\ n>0.
	\]
	Finally let $\scrI^\bullet$ be an injective resolution of the sheaf $\CF$,
	and $\scrJ^\bullet$ be an injective resolution of the module $\CM$.
	By the above calculation, the complex
	$\scrI^\bullet\otimes_\BF\scrJ^\bullet$
	is a $\Gamma_{c}^L$-acyclic resolution of
	$\CF\otimes_\BF\CM$, hence
	$$
	H_{L,c}^n(Y, \CF\otimes_\BF\CM) \isom
	H^n\big(\Gamma_{c}^L(\scrI^\bullet\otimes_\BF\scrJ^\bullet)\big)\isom
	H^n\big( \Gamma_{c}(\scrI^\bullet)\otimes_\BF(\scrJ^\bullet)^L\big).
	$$
	The K\"unneth formula for complexes of vector spaces
	implies that
	\begin{eqnarray*}
		H_{L,c}^n(Y, \CF\otimes_\BF\CM)
		&\isom&
		\bigoplus_{i+j=n} H^i\big(\Gamma_{c}(\scrI^\bullet)\big) \otimes_\BF
		H^j\big((\scrJ^\bullet)^L\big)\\
		&\isom&
		\bigoplus_{i+j=n} H_{c}^i(Y;\CF)\otimes_\BF H^j(L;\CM).
	\end{eqnarray*}
	This isomorphism and \fref{eq:2} proves the proposition. Naturality is straightforward to check.
\end{proof}

\begin{proof}[Proof of \fref{prop:free-action-cohomology-cor}] If the previous proposition is applied to $X=M$, $\CA=\Orientation[M]$, and $\CM=\BF_p$, we obtain statement (b). All we need is the obvious isomorphism $f_*^K(\Orientation[M])\isom\Orientation[M/G]$. When $G=K$ and $L$ is a trivial group, case (b) reduces to case (a).
\end{proof}

\begin{proof}[Proof of \fref{prop:Leray-spectral-sequence}] The spectral sequence $\BE_t^{i,j}$ is a special case of Grothendieck's second spectral sequence $\mathrm{II}_t^{i,j}$ introduced in  \cite[Section~5.7]{Grothendieck}. We remark that this spectral sequence is a variant of Borel's spectral sequence (see \cite[Theorem~IV-9.2]{bredon2012sheaf}).
\end{proof}

\section{Bound on the rank of finite topological transformation groups}
\label{sec:Mann_Su}
In this section, we prove \fref{thm:Mann-Su_v0}.
The following lemma reduces the general case of finite transformation groups to that of elementary abelian ones.

\begin{lem}[Halasi, Podoski, Pyber, Szab\'o {\cite[Corollary~1.8]{p-group-stuff}}]
  \label{lem:rank-and-elementary-abelian-subgroups}
  If every elementary abelian subgroup of a finite group $G$ has rank at most $d$,
  then $G$ has rank at most $\frac12d^2+2d+1$.\qed
\end{lem}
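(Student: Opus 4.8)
The plan is to reduce the statement to finite $p$-groups, and there to trade a large minimal generating set for a large elementary abelian subgroup by means of the commutator calculus. Throughout, write $d(H)$ for the minimal number of generators of a finite group $H$. The hypothesis ``every elementary abelian subgroup has rank $\le d$'' passes to all subgroups of $G$, and $\rank(G)=\sup\{d(H):H\le G\}$, so it suffices to bound $d(G)$ by $\tfrac12 d^{2}+2d+1$ for every finite group $G$ in this class. By the theorem of Lucchini and Guralnick, $d(G)\le\max_{p}d\big(\mathrm{Syl}_{p}(G)\big)+1$. Since every elementary abelian subgroup of a Sylow $p$-subgroup $P$ of $G$ is an elementary abelian $p$-subgroup of $G$, the group $P$ --- and likewise each of its subgroups --- again satisfies the hypothesis; hence everything reduces to proving: \emph{if $P$ is a finite $p$-group all of whose elementary abelian subgroups have rank $\le d$, then $\rank(P)\le\tfrac12 d^{2}+2d$.}

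For a $p$-group the hypothesis is equivalent to requiring that every \emph{abelian} subgroup have rank $\le d$ (pass from an abelian $A$ to $\Omega_{1}(A)$, of the same rank), and, replacing $P$ by a subgroup, it is enough to bound $d(P)$. The clean case is that of a regular $p$-group $P$ --- in particular, of any $P$ whose nilpotency class is less than $p$: there $\Omega_{1}(P)=\langle a:a^{p}=1\rangle$ has exponent $p$ and $|\Omega_{1}(P)|=|P:\mho_{1}(P)|\ge|P:\Phi(P)|=p^{d(P)}$, so the classical lower bound for abelian subgroups of $p$-groups (a $p$-group of order $p^{n}$ has an abelian subgroup of order $\ge p^{m}$ once $\binom{m}{2}<n$), applied to the exponent-$p$ group $\Omega_{1}(P)$, yields an elementary abelian subgroup of $P$ of rank $m$ with $\binom{m+1}{2}\ge d(P)$; since $m\le d$ this gives $d(P)\le\binom{d+1}{2}=\tfrac12 d^{2}+\tfrac12 d$, and hence $\rank(P)\le\tfrac12 d^{2}+\tfrac12 d$.

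For a general $p$-group one reduces to the class-$\le2$ situation: via Thompson's critical subgroup $C\trianglelefteq P$ (characteristic, of class $\le2$, with $\Phi(C)\le Z(C)$ and $C_{P}(C)=Z(C)$) --- or, alternatively, a maximal abelian normal subgroup $A$, for which $C_{P}(A)=A$ --- one bounds $d(P)$ in terms of the rank of that subgroup and of the faithful action of $P$ on it. In the class-$\le2$ case the commutator makes $W=C/Z(C)$ an $\BF_{p}$-vector space carrying an alternating pairing $\beta\colon W\times W\to[C,C]$, with $[C,C]$ elementary abelian of rank $\le d$; the preimage of any $\beta$-isotropic subspace of $W$ is an abelian subgroup of $C$, so isotropic subspaces have dimension $\le d$, while a maximal isotropic subspace $U$ has dimension $\ge\dim W/(\dim[C,C]+1)$ --- indeed for $w\notin U$ one has $\beta(w,U)\ne0$, so $w\mapsto\beta(w,-)|_{U}$ embeds $W/U$ into $\Hom(U,[C,C])$. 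Thus $\dim W$, and with it $d(C)$, is $O(d^{2})$; a careful optimisation of this estimate together with those used in the reduction --- and the fact that $Z(C)$ and $[C,C]$ themselves have rank $\le d$ --- is what produces the precise bound $\tfrac12 d^{2}+2d$.

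The step I expect to be the main obstacle is this general $p$-group case: controlling the relevant $p$-subgroup of the automorphism group of a bounded-rank $p$-group, uniformly in $p$ (the small primes $p=2,3$ included) and without worsening the quadratic term, is where the real work lies; it is precisely to absorb the losses incurred there, together with the ``$+1$'' of the Sylow reduction, that the stated estimate carries the linear slack $2d$. A second essential input is the Lucchini--Guralnick reduction itself, whose proof relies on the classification of finite simple groups.
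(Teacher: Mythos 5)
The paper does not prove this lemma at all: it is quoted verbatim from Halasi--Podoski--Pyber--Szab\'o \cite[Corollary~1.8]{p-group-stuff} and the \verb|\qed| simply closes the citation. So there is no internal argument to compare yours against; the only question is whether your reconstruction stands on its own. Your reduction is sound as far as it goes: passing to subgroups, invoking the Guralnick--Lucchini theorem $d(G)\le\max_p d(\mathrm{Syl}_p(G))+1$, and noting that the hypothesis is inherited by Sylow subgroups correctly reduces everything to the claim that a finite $p$-group $P$ with no elementary abelian subgroup of rank $d+1$ has $\rank(P)\le\tfrac12 d^2+2d$. The regular (in particular class $<p$) case via $\Omega_1(P)$ and the classical Burnside-type existence of abelian subgroups of order $p^m$ in groups of order $p^n$ with $\binom{m}{2}<n$ is also essentially complete.

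The general $p$-group case, however, is a genuine gap, and you flag it yourself. Two things are missing. First, the reduction from $P$ to the critical subgroup $C$ is only named, not executed: one must bound $d(P)$ using the faithful action of $P/Z(C)$ on $C$, i.e.\ bound the rank of a $p$-subgroup of $\Aut(C)$ for a class-$2$ group $C$ of bounded rank, uniformly in $p$ including $p=2,3$ --- this is precisely the hard quantitative step and nothing in your sketch supplies it. Second, even for $C$ itself your symplectic estimate does not reach the stated constant: a maximal isotropic $U\le W=C/Z(C)$ has $\dim U\le d$ and $\dim W\le\dim U\,(1+\dim[C,C])\le d(d+1)$, whence $\rank(C)\le\dim W+\rank(Z(C))\le d^2+2d$, already larger than the target $\tfrac12 d^2+2d$ before the automorphism step contributes anything. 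The phrase ``a careful optimisation \dots\ is what produces the precise bound'' is therefore doing all the work; as written the argument proves at best a bound of the shape $O(d^2)$ with a worse constant, not the lemma as stated. Since the paper treats this as an external black box, the honest fix is to do the same: cite \cite[Corollary~1.8]{p-group-stuff} rather than attempt to rederive the sharp constant.
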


The dimensions of the cohomology   vector spaces $H_c^*(M;\Orientation)$ of a manifold $M$ are controlled by the homology group $H_*(M;\BZ)$.

\begin{lem}\label{lem:universal_bound_on_mod_p_cohomology}
If $M$ is a manifold such that $\dim H_*(M;\BQ)= B$ is finite and
	$H_*(M;\BZ)$ has $\tau$ torsion elements, then $\dim H_c^*(M;\Orientation)\le B+2\tau$ for all primes $p$.
\end{lem}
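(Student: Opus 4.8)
The plan is to relate the compactly supported sheaf cohomology $H_c^*(M;\Orientation)$ to ordinary (singular/sheaf) homology and cohomology of $M$ via Poincar\'e duality for cohomology manifolds, and then to control the passage from $\BZ$-coefficients to $\BF_p$-coefficients by the universal coefficient theorem, where the torsion contributes the factor $2\tau$. First I would invoke Poincar\'e duality in the form used already in the proof of \fref{lem:H_0_H^d_duality}, namely \cite[V-9.2]{bredon2012sheaf}: for a {\cm} $M$ one has $H_c^k(M;\Orientation)\isom H_{d-k}^{\mathrm{BM}}(M;\BF_p)$ where $d=\dim_p(M)$ and $H^{\mathrm{BM}}_*$ denotes Borel--Moore homology with $\BF_p$-coefficients (equivalently, the homology denoted $H_\bullet^c$ in \cite{bredon2012sheaf}). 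Summing over $k$, it suffices to bound $\dim_{\BF_p} H_*^{\mathrm{BM}}(M;\BF_p)$.

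The next step is to reduce Borel--Moore $\BF_p$-homology to ordinary integral homology. There is a universal coefficient spectral sequence (or, over a field, a plain universal coefficient theorem) for Borel--Moore homology of a locally compact space relating $H_*^{\mathrm{BM}}(M;\BF_p)$ to $H_*^{\mathrm{BM}}(M;\BZ)$; and for a manifold $M$ with $H_*(M;\BZ)$ finitely generated, Borel--Moore homology is in turn controlled by ordinary homology and cohomology — indeed, applying Poincar\'e--Lefschetz duality again with $\BZ$-coefficients (twisted by the integral orientation sheaf) turns $H_*^{\mathrm{BM}}(M;\BZ)$ into $H^*(M;\BZ)$, which by the integral universal coefficient theorem is built from $H_*(M;\BZ)$. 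Concretely: write each integral homology group as $H_i(M;\BZ)=\BZ^{b_i}\oplus T_i$ with $\sum b_i=B$ and $\sum(\text{number of cyclic summands of }T_i)\le\tau$ (in fact $\le\tau$ since each nontrivial torsion summand contributes at least one torsion element). Then $\dim_{\BF_p}(H_i(M;\BZ)\otimes\BF_p)\le b_i+(\text{torsion summands of }T_i)$ and $\dim_{\BF_p}\Tor(H_{i-1}(M;\BZ),\BF_p)\le(\text{torsion summands of }T_{i-1})$, so by universal coefficients $\dim_{\BF_p}H_i(M;\BF_p)\le b_i+(\text{torsion contributions})$, and summing over all $i$ yields $\dim_{\BF_p}H_*(M;\BF_p)\le B+2\tau$, the $\tau$ appearing once from the $\otimes$ terms and once from the $\Tor$ terms. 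The same bookkeeping applies to cohomology and hence, through the duality isomorphisms, to $\dim H_c^*(M;\Orientation)$.

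The main obstacle, and the point needing the most care, is that $M$ is only assumed to be a topological manifold (so that $\Orientation$ is its $\BF_p$-orientation sheaf) and is not assumed compact; one must make sure the duality and universal-coefficient machinery is available in this generality with the compactly-supported/Borel--Moore variance matched correctly, and that the integral hypotheses ($\dim_\BQ H_*(M;\BQ)=B$, $\tau$ torsion elements in $H_*(M;\BZ)$) genuinely transfer to the $\BF_p$-side uniformly in $p$. The clean way to organize this is: (i) $H_c^*(M;\Orientation)\isom H_{d-*}^{\mathrm{BM}}(M;\BF_p)$ by Poincar\'e duality over $\BF_p$; (ii) a short exact (universal coefficient) sequence $0\to H_{k}^{\mathrm{BM}}(M;\BZ)\otimes\BF_p\to H_{k}^{\mathrm{BM}}(M;\BF_p)\to\Tor(H_{k-1}^{\mathrm{BM}}(M;\BZ),\BF_p)\to 0$; (iii) Poincar\'e duality over $\BZ$ identifying $H_*^{\mathrm{BM}}(M;\BZ)$ with the integral cohomology $H^*(M;\BZ)$ (twisted appropriately), which is finitely generated with total rank $B$ and with at most $\tau$ torsion cyclic summands altogether by the integral UCT applied to $H_*(M;\BZ)$; (iv) add up. Each torsion cyclic summand $\BZ/p^a$ contributes exactly $1$ to a $\otimes\BF_p$ term and exactly $1$ to the neighbouring $\Tor$ term, and $0$ if its order is prime to $p$, so the total excess over $B$ is at most $2\tau$, uniformly in $p$.
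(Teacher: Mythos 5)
Your argument is essentially the paper's: Poincar\'e duality \cite[V-9.2]{bredon2012sheaf} converts $H_c^*(M;\Orientation)$ into mod $p$ homology, the universal coefficient theorem splits that into $H_*(M;\BZ)\otimes\BF_p$ and $\Tor(H_*(M;\BZ),\BF_p)$, and the free part contributes $B$ while the torsion contributes at most $\tau$ to each of the two remaining terms, uniformly in $p$. The one thing to fix is your step (iii): having identified $H_c^*(M;\Orientation)$ with Bredon's compactly supported homology $H^c_{d-*}(M;\BF_p)$, you neither need nor should use a second duality to reach $H^{*}(M;\BZ)$ --- for a topological manifold, compactly supported (Borel--Moore) homology coincides with singular homology, so $H^c_*(M;\BZ)=H_*(M;\BZ)$ and the hypotheses on $B$ and $\tau$ apply directly. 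As written, that second duality also mismatches supports: compactly supported homology is dual to compactly supported cohomology $H_c^{d-*}(M;\BZ)$, not to $H^{d-*}(M;\BZ)$ (compare $M=\RR^d$, where $H_0=\BZ$ but $H^d=0$). Deleting step (iii) leaves exactly the paper's proof.
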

\begin{proof}
	By the Poincar\'e duality \cite[Theorem V-9.2]{bredon2012sheaf},
	it is enough to prove that $\dim H_*(M;\BZ_p)\le B+2\tau$.

	By  the universal coefficient theorem,
	there is a split natural short  exact sequence
	\begin{equation}\label{eq:univ_coeff}
	0\to
	H_*(M;\BZ)\otimes \BZ_p \to
	H_*(M;\BZ_p)\to
	\Tor\big(H_*(M;\BZ),\BZ_p\big)\to
	0
	\end{equation}
	for all primes $p$, where
	\begin{equation}
	\label{eq:15}
	\Tor\big(H_*(M;\BZ),\BZ_p\big)=\{h\in H_*(M;\BZ):ph=0\},
	\end{equation}
	which has at most $\tau$ elements.
	Moreover, $H_*(M;\BZ)\otimes \BZ_p$ is an elementary abelian
	$p$-group of rank at most $\tau+B$.
	This implies that $\dim H_*(M;\BZ_p)\le B+2\tau$.
\end{proof}

We recall Borel's Fixed Point Formula \cite[Theorem XIII-4.3]{borel1960seminar}.

\begin{prop} [Borel]
	\label{prop:Borel-fixpoint-formula}
	Let $M$ be  a first countable $d$-{\cm}
	for some prime $p$, and
	let $G$ be an elementary abelian $p$-group
	acting continuously and effectively on $M$.
	Let $x\in \Fix{M}{G}$ be a fixed point of $G$.
	For a subgroup $H\le G$, denote by  $d(H)$ the dimension
	of the connected component of $x$ in $M^H$.
	Then
	$$
	d-d(G) = {\sum}_H \big( d(H) - d(G) \big),
	$$
	where $H$ runs through the subgroups of $G$ of index $p$.
\end{prop}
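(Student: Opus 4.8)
The plan is to work in equivariant sheaf cohomology with orientation coefficients, localized at $x$, and to express both sides of the asserted identity as degrees of ``equivariant Euler classes'' in the ring $R:=H^*(G;\BF_p)$. (One could instead induct on the rank of $G$, reducing to the rank-$2$ case by applying the statement of lower rank to the $G/Z$-action on the component of $x$ in $M^Z$ for the order-$p$ subgroups $Z\le G$; but the argument below is uniform in the rank.)

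First I would localize at $x$. By \fref{prop:Smith-Floyd} the component $M^H_x$ of $x$ in $M^H$ is a {\cm} with $\dim_p(M^H_x)=d(H)\le d$, and $\Orientation[M]$ restricted to it is its own orientation sheaf, so each $d(H)$ is an invariant of the germ of the $G$-space at $x$. For a subgroup $H\le G$ set $\Lambda_H:=\varinjlim_{U}H^*_{G,c}\big(U\cap M^H;\Orientation\big)$, the colimit over $G$-invariant open neighbourhoods $U$ of $x$ (here first countability of $M$ is used). Taking the colimit of the spectral sequences of \fref{prop:Leray-spectral-sequence} gives $H^i\big(G;\mathcal H^j_x(M^H)\big)\Rightarrow\Lambda_H^{i+j}$, where $\mathcal H^j_x(M^H)$ is the local cohomology of the $d(H)$-{\cm} $M^H$ at $x$ with orientation coefficients: it vanishes for $j\ne d(H)$ and is a one-dimensional $\BF_p$-space for $j=d(H)$, on which $G$ acts trivially because a $p$-group has no nontrivial $\BF_p^\times$-valued character. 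Hence the spectral sequence has a single nonzero row and collapses, so $\Lambda_H\isom R[-d(H)]$ as a graded $R$-module, free of rank one with generator in degree $d(H)$; in particular $\Lambda:=\Lambda_{\{1\}}\isom R[-d]$ and $\Lambda_G\isom R[-d(G)]$.

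Next I would introduce Euler classes. Restriction along the nested $G$-invariant closed germs $\{x\}\subseteq M^G_x\subseteq M^H_x\subseteq M$ (using \fref{prop:long-exact-sequence-of-closed-subset} and the orientation identifications of \fref{prop:Smith-Floyd}) induces graded $R$-module maps; since domain and codomain are free of rank one, the map $\Lambda\to\Lambda_H$ is multiplication by an element $e_H\in R^{\,d-d(H)}$, and functoriality gives $e_G=e_H\cdot e_{H,G}$ with $e_{H,G}\in R^{\,d(H)-d(G)}$ the Euler class of $M^G_x$ in $M^H_x$ for the induced $\BZ_p\isom G/H$-action. Smith theory for $\BZ_p$-actions on cohomology manifolds — the Smith--Gysin sequence that already underlies \fref{prop:Smith-Floyd} — identifies $e_{H,G}$ with a power of the distinguished generator attached to the quotient character $G\to G/H$: for $p=2$ it is $\ell_H^{\,d(H)-d(G)}$, where $\ell_H\in R^1=\Hom(G,\BF_2)$ is the nonzero functional with kernel $H$; for odd $p$ it is a unit times $\big(\beta^1\ell_H\big)^{(d(H)-d(G))/2}$ (the codimension of a $\BZ_p$-fixed set being even for odd $p$, again by Smith theory). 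In the reduced ring $R_{\mathrm{red}}$, a polynomial ring by \fref{prop:cohomology-of-elementary-abelian-p-groups}, the element $\ell_H$ (resp.\ $\beta^1\ell_H$) is irreducible, and distinct hyperplanes give pairwise non-associate such elements.

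Finally I would compute $\deg e_G$ in two ways. Since each $e_{H,G}$ divides $e_G$ and these divisors are pairwise coprime in the UFD $R_{\mathrm{red}}$, their product divides $e_G$, giving at once $d-d(G)=\deg e_G\ge\sum_H\big(d(H)-d(G)\big)$. The reverse inequality requires a localization theorem of the kind proved by Borel and Quillen: the support in $\operatorname{Spec}R_{\mathrm{red}}$ of $\operatorname{coker}(\Lambda\xrightarrow{e_G}\Lambda_G)$, namely the hypersurface $V(e_G)$, is a union of the subvarieties attached to subgroups of $G$, hence — being pure of codimension one — is contained in $\bigcup_H V(\ell_H)$; and localizing at the height-one prime $(\ell_H)$ one shows, using that $M^H_x$ is the deepest fixed germ visible there, that $\Lambda\to\Lambda_H$ becomes an isomorphism, so $e_H$ is a unit at $(\ell_H)$ and the multiplicity of $\ell_H$ in $e_G$ equals $\deg e_{H,G}=d(H)-d(G)$. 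Thus $e_G$ is a unit times $\prod_H e_{H,G}$, and comparing degrees gives $d-d(G)=\sum_H\big(d(H)-d(G)\big)$. I expect this last step to be the main obstacle: one must establish this Quillen-type localization/support statement in the sheaf-theoretic category of cohomology manifolds — equivalently, that the ``normal data'' of $M^G_x$ in $M$ splits, as an Euler class, into the pieces indexed by the hyperplanes $H$ — where there is no tangent space and the splitting must be distilled from Smith theory, e.g.\ by induction on $[G:H]$ paralleling the induction in the proof of \fref{prop:Smith-Floyd}. Everything else is formal manipulation with the spectral sequences and dimension facts already recorded above.
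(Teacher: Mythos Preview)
The paper does not prove this proposition at all: it is simply recalled, with the sentence ``We recall Borel's Fixed Point Formula \cite[Theorem~XIII-4.3]{borel1960seminar}'', and no proof is given. So there is nothing in the paper to compare your argument against.

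For what it is worth, your outline is in the spirit of Borel's original argument in the seminar (localizing the equivariant cohomology at the fixed point and reading off the codimensions from the degrees of Euler-type classes in $H^*(G;\BF_p)$), and the inductive alternative you mention in your first paragraph is also a standard route. You yourself correctly flag the genuine gap: the final step, identifying the multiplicity of each hyperplane factor in $e_G$ via a Quillen-type localization/support theorem for cohomology manifolds, is the substantive part, and you have not carried it out. One technical point to watch if you pursue this: your definition $\Lambda_H=\varinjlim_U H^*_{G,c}(U\cap M^H;\Orientation)$ over shrinking $G$-neighbourhoods of $x$ has a variance issue, since $H^*_c$ is \emph{covariant} for open inclusions (extension by zero), so the transition maps go the wrong way for a colimit over a cofinal shrinking system; what you actually want is local cohomology with support at $x$ (equivalently the relative group $H^*_G(M,M\setminus\{x\};\Orientation)$), which does give the single nonzero stalk you describe and is what Borel uses.
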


\begin{lem} \label{lem:Subgroups-of-Stabilisers}
	Let $G$ be an elementary abelian $p$-group,
	and $M$ be a connected effective {\Gdcm}.
	For a maximal stabilizer subgroup $H\in\Stab_{\max}(G,M)$,
	let $F$ be a connected component of $\Fix{M}{H}$.
	Then $\rank(H)\le d$, and
	$F$ has an $H$-invariant open neighbourhood
	$U\subseteq M$ such that
	\begin{enumerate}[\indent(a)]
		\item \label{item:15}
		there is a complete flag of subgroups
		$$
		\{1\}=H_0\lneq H_1\lneq\dots\lneq H_{\rank(H)}=H,
		\quad\quad
		\rank(H_i)=i,
		$$
		all $H_i$ belonging to $\Stab(H,U)\subseteq\Stab(G,M)$,
		\item \label{item:16}
		there are at most $d!$ such complete flags in $\Stab(H,U)$.
	\end{enumerate}
\end{lem}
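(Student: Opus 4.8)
The plan is to produce the flag in statement (a) greedily, descending from $H$ toward $\{1\}$, at each stage using Borel's Fixed Point Formula (\fref{prop:Borel-fixpoint-formula}) to find an index-$p$ subgroup whose fixed-point component through a base point properly contains the current one. First I would fix a point $x\in F$; since $H$ is a stabilizer subgroup and $H\in\Stab_{\max}(G,M)$, after possibly shrinking to an $H$-invariant open neighbourhood of $F$ we may assume $H$ acts effectively on that neighbourhood (this is where the maximality of $H$ enters: points near $F$ cannot have stabilizer strictly larger than $H$, so the kernel of the $H$-action on a suitable neighbourhood is contained in $H$ and, being normal with the fixed set of $F$ avoiding it, is trivial). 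Then $F$, being a connected component of $\Fix MH$, is a {\cm} of dimension $d(H)$ by \fref{prop:Smith-Floyd}, and since the effective $H$-action on $M$ (restricted to a connected neighbourhood of $F$) is nontrivial as long as $H\ne\{1\}$, we get $d(H)<d$, hence already $d-d(H)>0$.

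The inductive step: suppose we have built $H=H_r\gneq H_{r-1}\gneq\dots\gneq H_i$ with $\rank(H_j)=j$ and with the connected component $F_j$ of $x$ in $\Fix M{H_j}$ strictly increasing in dimension, $d(H_r)<d(H_{r-1})<\dots<d(H_i)$. If $i>0$, apply Borel's formula inside $F_{i}$: the group $H_i$ acts effectively on (a neighbourhood of $x$ in) $F_i$ — again using maximality of $H$ to rule out a larger stabilizer and hence effectiveness after shrinking — and $x$ is a fixed point, so
\[
d(H_i)-d(\{1\}_{F_i}) \;=\; \sum_{K}\bigl(d_{F_i}(K)-0\bigr),
\]
summed over index-$p$ subgroups $K$ of $H_i$, where $d_{F_i}(K)$ is the dimension of the component of $x$ in $\Fix{F_i}{K}$. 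Since the left side is positive (the action is effective and nontrivial), some summand is positive; choose $H_{i-1}:=K$ realizing it. Then the component $F_{i-1}$ of $x$ in $\Fix M{H_{i-1}}$ contains $F_i$ and has strictly larger dimension, so $d(H_{i-1})>d(H_i)$, and $\rank(H_{i-1})=i-1$ since $H_{i-1}$ has index $p$ in $H_i$. Continuing down to $H_0=\{1\}$ gives the flag; strict inequality of dimensions forces strict inequality of subgroups, and also gives $\rank(H)=r\le d-d(H)\le d$ because the $r+1$ integers $d(H_0)>\dots>d(H_r)$ lie in $\{0,1,\dots,d\}$. Finally, set $U$ to be the (finite) intersection of all the $H$-invariant shrinking neighbourhoods used along the way, replaced by its $H$-saturation; all the $H_j$ lie in $\Stab(H,U)$ since each $H_j$ is the stabilizer of $x$ — or of a suitable nearby point — inside $U$, and $\Stab(H,U)\subseteq\Stab(G,M)$ trivially.

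For statement (b), observe that every complete flag $\{1\}=K_0\lneq K_1\lneq\dots\lneq K_r=H$ in $\Stab(H,U)$ has the property that $d(K_0)>d(K_1)>\dots>d(K_r)$ is a \emph{strictly} decreasing sequence in $\{0,\dots,d\}$: indeed each $K_{j}$ has index $p$ in $K_{j+1}$, so either $d(K_j)>d(K_{j+1})$ or $\Fix M{K_j}$ and $\Fix M{K_{j+1}}$ agree near $x$ in dimension, but in the latter case Smith theory (\fref{prop:Smith-Floyd}, applied to the $\BZ_p$-quotient acting on the component of $x$ in $\Fix M{K_j}$) forces the two components to coincide, contradicting $K_j\in\Stab(H,U)$ being a distinct stabilizer. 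Hence the map sending a flag to the sequence of components $F_{K_0}\supsetneq\dots\supsetneq F_{K_r}$ — equivalently to the strictly decreasing dimension sequence — together with the observation that $K_j$ is determined as the \emph{full} stabilizer in $H$ of the component $F_{K_j}$ (any $g\in H$ fixing that component pointwise fixes $x$, so lies in $H_x\supseteq K_j$; and one checks $H_x=K_j$ after shrinking), shows distinct flags give distinct chains of components inside $F=F_{K_r}$'s ambient tower. Bounding the number of such chains: since $r\le d$ and each component is pinned down once we know its dimension together with which index-$p$ subgroup was chosen, a crude count gives at most $d!$ (the chains correspond to ordered sequences of at most $d$ strictly-nested components, and at each of the at most $d$ steps Borel's formula offers a bounded menu; the clean bound $d!$ comes from the fact that the whole flag is recoverable from the induced ordering on at most $d$ coordinates — essentially the flag embeds into the standard flag of an at-most-$d$-dimensional picture, of which there are $d!$).

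The main obstacle I expect is \textbf{the effectiveness bookkeeping}: at each descent step I need the relevant subgroup $H_j$ to act effectively on a neighbourhood of $x$ in $F_j$ so that Borel's formula applies with a \emph{nontrivial} (hence dimension-dropping) action. This is exactly what the hypothesis $H\in\Stab_{\max}(G,M)$ is for — it prevents the stabilizer from jumping up as one moves toward $F$ — but turning "$H$ is a maximal stabilizer" into "$H_j$ acts effectively on a small $F_j$-neighbourhood of $x$" requires care: one must choose the open neighbourhood $U$ once and for all, small enough that no point of $U$ has stabilizer not contained in $H$, and then argue that the kernel of $H_j\curvearrowright(F_j\cap U)$ is a normal subgroup of $H_j$ all of whose elements fix a nonempty open set, which in a {\cm} forces it (via \fref{prop:Smith-Floyd}, a fixed set of full dimension is everything on a component) to act trivially on the component of $x$ in $M$ — contradicting effectiveness of $G\curvearrowright M$ unless it is trivial. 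Threading this uniformly through the induction, while keeping $U$ fixed, is the delicate part; the rest is Borel's formula plus elementary counting.
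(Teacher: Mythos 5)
Your overall strategy --- descend from $H$ to $\{1\}$ via Borel's Fixed Point Formula, using maximality of $H$ to control stabilizers near $F$ and strict growth of fixed-point dimensions to bound the length of the flag --- is the same as the paper's, but the write-up has genuine gaps. The most consequential one is the construction of $U$. The paper takes $U=M\setminus\bigcup_{K\le G}Z(K)$, where $Z(K)$ is the union of the components of $\Fix{M}{K}$ disjoint from $F$; this guarantees not only that every stabilizer of a point of $U$ lies in $H$, but also that for each $K\le H$ the set $\Fix{U}{K}$ is a nonempty open subset of the \emph{single} component of $\Fix{M}{K}$ containing $F$. That second property makes $\dim_p\Fix{U}{K}$ well defined and strictly decreasing along inclusions of stabilizers, and your argument relies on it silently in two places: in (a), to produce a point whose stabilizer is \emph{exactly} the chosen index-$p$ subgroup $K$ (nearby points of $\Fix{M}{K}$ could a priori have larger stabilizers; one rules this out because the finitely many sets $\Fix{U}{L}$, $L\gneq K$, are closed of strictly smaller dimension and so cannot cover the connected {\cm} $\Fix{U}{K}$ --- you assert such a point exists but do not prove it); and in (b), where ``the two components coincide, contradicting $K_j\in\Stab(H,U)$'' fails if $\Fix{U}{K_j}$ meets several components of $\Fix{M}{K_j}$, as it may for your $U$, an intersection of ad hoc shrinkings around one point. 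A smaller but real error: the displayed Borel identity is misapplied, since $H_i$ acts trivially on $F_i\subseteq \Fix{M}{H_i}$, so ``Borel inside $F_i$'' reads $0=0$; the formula must be applied to the effective action of $H_i$ on the $d$-dimensional $M$ at the fixed point $x$, giving $d-d(H_i)=\sum_K\big(d(K)-d(H_i)\big)>0$. (Relatedly, the ``effectiveness bookkeeping'' you single out as the main obstacle is not one: subgroups of $G$ act effectively on $M$ because $G$ does, and no shrinking is needed to invoke \fref{prop:Borel-fixpoint-formula}.)

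The second genuine gap is the $d!$ bound in (b), which is not actually proved: ``a bounded menu at each of at most $d$ steps'' yields only something like $d^d$, and the remark about embedding into a standard flag is not an argument. The paper's count is quantitative: Borel's formula shows that the number of index-$p$ subgroups $K\lneq H_i$ with $d(K)>d(H_i)$ --- hence the number of candidates for $H_{i-1}$ in $\Stab(H,U)$ --- is at most $d-\dim_p\Fix{U}{H_i}$, and the strictly decreasing dimension sequence forces $\dim_p\Fix{U}{H_i}\ge\dim_p(F)+\rank(H)-i$, so the product of the menu sizes telescopes to $\prod_{i=1}^{\rank(H)}\big(d-\dim_p(F)-\rank(H)+i\big)\le d!$. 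Without this refinement your bound does not come out to $d!$.
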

\begin{proof}
	By \fref{prop:Smith-Floyd}, $F$ is a {\cm}.
	For a subgroup  $K\leq G$, let $Z(K)$ be the union of those
	connected components of $\Fix{M}{K}$ which are disjoint from
	$F$.
	By \fref{prop:Smith-Floyd},
	each connected component of $\Fix{M}{K}$ is a {\cm}, hence
	$\Fix{M}{K}$ is locally connected, and $Z(K)$ is closed in $M$. If $h\in
	H$, then $h(Z(K))=Z(hKh^{-1})$.
	
	Define the $H$-invariant open neighbourhood $U$ of $F$ by
	\[
	U=M\setminus \bigcup_{K\leq G}Z(K).
	\]
	If $x\in U$, then $x\notin Z(G_x)$ implies that $\Fix{M}{G_x}$ intersects $F$ at a point $y$. Then we have $(G_x\cup H)\subseteq G_y$, and $H\in \Stab_{\max}(G,M)$ yields $G_x\leq G_y=H$. Thus,  $\Stab(H,U)=\Stab(G,U)\subseteq\Stab(G,M)$.
	
	If $K\leq H$, then $\Fix{M}{K}\setminus Z(K)$ is the connected
	component of $\Fix{M}{K}$ containing $F$, and $\Fix{U}{K}$ is an open
	subset of $\Fix{M}{K}\setminus Z(K)$, therefore,
	$\Fix{U}{K}$ is a {\cm}
	and $\Fix{U}{K}\supseteq F$.
	
	Assume that we have $H_x\lneq H_y$ for the points $x,y\in U$. Then
	$\Fix{M}{H_y}\subseteq  \Fix{M}{H_x}$, and consequently
	$\Fix{M}{H_y}\setminus Z(H_y)$ is a proper closed  subset of
	$\Fix{M}{H_x}\setminus Z(H_x)$,
	which is a connected {\cm},
	and  $\Fix{M}{H_y}\setminus Z(H_y)$
	does not cover the point $x$. This gives that
	\[\dim_p(\Fix{U}{H_y})<\dim_p(\Fix{U}{H_x})\]
	by \cite[Corollary I-4.6]{borel1960seminar}.
	
	To prove \fref{item:15}, we construct $H_i$  by downward induction on
	$i$. We begin with $H_{\rank(H)}=H$ as required.
	Suppose that for some $i\ge 1$,
	the subgroup $H_i$ of rank $i$ is already constructed.
        Let $x\in M$ be a point whose stabilizer $E_x$ is $H_i$.
	Applying \fref{prop:Borel-fixpoint-formula}
	to $x$ we obtain that there are  nearby points $y\in M$
        whose stabilizer $H'=H_y$ is a subgroup of $H_i$ of rank $i-1$,
        and the number of such subgroups $H'$ is
        between $1$ and $d-\dim_p(\Fix{U}{H_i})$.
	Choose one of these subgroups for  $H_{i-1}$ to complete the induction step.
	
	The sequence
	\[
	\dim_p(\Fix{U}{H_0})>\dim_p(\Fix{U}{H_1})>\dots>\dim_p(\Fix{U}{H_{\rank(H)}})
	\]
	is strictly decreasing, hence $\rank(H)\le d-\dim_p(F)\le d$.
	
	Finally, we have seen above that for any fixed subgroup $H_i$,
	there are at most $d-\dim_p(\Fix{U}{H_i})\leq d-\dim_p(F)-\rank(H)+i$ possible choices for $H_{i-1}$. Thus, the total number of complete flags in $\Stab(H,U)$ is at most
	\[
	\prod_{i=1}^{\rank(H)}(d-\dim_p(F)-\rank(H)+i)\leq d!,
	\]
	as claimed.
\end{proof}

The following lemma is a variation of
\cite[Theorem 2.3]{Mann_Su}.
\begin{lem} \label{lem:Mann-Su-free}
	For all integers $d,B>0$, there is an integer $f(d,B)$ with the
	following property.
	Let $G$ be an elementary abelian $p$-group,
	and let $M$ be a {\Gdcm}.
	Assume that the $G$-action is free,
	and that $B=\dim H_c^*(M;\Orientation)<\infty$.
	Then the rank of $G$ is at most $f(d,B)$.
\end{lem}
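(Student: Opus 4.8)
The plan is to feed two facts about the free quotient $M/G$ into the Leray spectral sequence for equivariant cohomology, and then to squeeze $r=\rank(G)$ between the (large) size of $H^*(G;\BF_p)$ and the vanishing of the abutment in degrees $>d$; the spectral sequence will be used only as a spectral sequence of $\BF_p$-vector spaces. First, since $G$ is finite and acts freely, $M\to M/G$ is a covering map, so $M/G$ is again a \dcm, whence $H_c^n(M/G;\Orientation[M/G])=0$ for $n>d$; by \fref{prop:free-action-cohomology-cor}(a) (with $f^G_*\Orientation[M]\isom\Orientation[M/G]$) this gives $H^n_{G,c}(M;\Orientation)=0$ for all $n>d$. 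We may assume $M\neq\emptyset$, so $B\ge1$ by \fref{lem:H_0_H^d_duality}. Now invoke \fref{prop:Leray-spectral-sequence}: there is a spectral sequence $\BE_2^{i,j}=H^i\big(G;H_c^j(M;\Orientation)\big)\Rightarrow H^{i+j}_{G,c}(M;\Orientation)$; since $M$ is a \dcm\ we have $H_c^j(M;\Orientation)=0$ for $j>d$, so $\BE_2$ is concentrated in the rows $0\le j\le d$, while by the above the abutment vanishes in total degrees $>d$.

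The heart of the argument is to look at the \emph{lowest} nonvanishing fibre row. Let $a=\min\{\,j:H_c^j(M;\Orientation)\neq0\,\}$, which exists with $a\le d$ because $H_c^d(M;\Orientation)\neq0$ (\fref{lem:H_0_H^d_duality}), and set $V=H_c^a(M;\Orientation)\neq0$, so $\dim V\le B$. The differentials $d_r$ emanating from the row $j=a$ land in row $a-r+1<a$ and hence vanish; therefore each $\BE_{r+1}^{i,a}$ is a quotient of $\BE_r^{i,a}$, and the kernel of $\BE_2^{i,a}\twoheadrightarrow\BE_\infty^{i,a}$ has dimension at most $\sum_{r\ge2}\dim\BE_2^{\,i-r,\,a+r-1}$, the total size of the sources of the incoming differentials. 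Taking $i=d-a+1$, the entry $\BE_\infty^{i,a}$ is a subquotient of $H^{d+1}_{G,c}(M;\Orientation)=0$, so $\dim H^{d-a+1}(G;V)\le\sum_{r=2}^{d-a+1}\dim H^{\,d-a+1-r}\big(G;H_c^{a+r-1}(M;\Orientation)\big)$. Bounding each summand by $\dim H^{d-a-1}(G;\BF_p)\cdot\dim H_c^{a+r-1}(M;\Orientation)$ via \fref{prop:cohomology-of-elementary-abelian-p-groups}(d) and the monotonicity of $k\mapsto\dim H^k(G;\BF_p)=\binom{k+r-1}{r-1}$, and using $\sum_{j>a}\dim H_c^j(M;\Orientation)\le B-\dim V$, I obtain
\[
\dim H^{d-a+1}(G;V)\ \le\ (B-\dim V)\,\dim H^{d-a-1}(G;\BF_p),
\]
the right-hand side being $0$ when $a=d$.

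For the matching lower bound, note that $G$ is a $p$-group, so $V^G\neq0$; choosing a trivial submodule $\BF_p\hookrightarrow V$ and running the long exact sequence of group cohomology together with \fref{prop:cohomology-of-elementary-abelian-p-groups}(d) yields
\[
\dim H^{d-a+1}(G;V)\ \ge\ \dim H^{d-a+1}(G;\BF_p)-(\dim V-1)\,\dim H^{d-a}(G;\BF_p).
\]
Combining the two inequalities and inserting the values $\dim H^k(G;\BF_p)=\binom{k+r-1}{r-1}$ with the recursion $\binom{k+r-1}{r-1}=\tfrac{k+r-1}{k}\binom{k+r-2}{r-1}$ (\fref{prop:cohomology-of-elementary-abelian-p-groups}(c)), I arrive, writing $n=d-a$, at an inequality of the shape $(n+r)(n+r-1)\le(B-1)(n+1)(2n+r-1)$, hence $r^2-r\le(B-1)(d+1)(r+2d)$, which forces
\[
r\ \le\ (B-1)(d+1)+1+\sqrt{2(B-1)(d+1)\,d}\ =:\ f(d,B);
\]
the degenerate case $a=d$ collapses the inequality to $r\le\dim V-1\le B-1\le f(d,B)$, so it is covered as well.

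I expect the main obstacle to be conceptual rather than computational: realizing that one should work with the bottom fibre row $j=a$, the unique row with no outgoing differentials, so that its $\BE_\infty$-entries are honest quotients of $H^i(G;V)$ and their forced vanishing above total degree $d$ becomes an \emph{upper} bound on $\dim H^{d-a+1}(G;V)$ — for an interior row the differentials run both ways and the bookkeeping does not close. The quantitative point that makes the squeeze work is that the lower bound for $\dim H^{d-a+1}(G;V)$, which rests on $V^G\neq0$, is of size $\dim H^{d-a+1}(G;\BF_p)$, a polynomial of degree $r-1$, whereas the spectral-sequence upper bound involves only $\dim H^{d-a-1}(G;\BF_p)$; the ratio of these two is of order $r^2$, and this quadratic gain in $r$ against the merely linear loss coming from the coefficients $B$ and $d$ is precisely what bounds $r$. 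Two routine-looking inputs deserve care because they are exactly where the two ends of the estimate meet: that $M/G$ is a \dcm\ (so that its compactly supported cohomology vanishes above degree $d$), and that a nonzero module over the $p$-group $G$ has nonzero invariants.
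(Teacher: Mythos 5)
Your proof is correct and follows the same skeleton as the paper's: the Grothendieck--Leray spectral sequence of \fref{prop:Leray-spectral-sequence}, vanishing of the abutment above total degree $d$ via \fref{prop:free-action-cohomology-cor}(a) applied to the free quotient, and a dimension count concentrated on the lowest nonvanishing fibre row $j=a$, squeezed against \fref{prop:cohomology-of-elementary-abelian-p-groups}. The one genuine divergence is how you handle the possibly nontrivial $G$-action on $H_c^*(M;\Orientation)$: the paper first replaces $G$ by a subgroup $\tilde G$ of corank at most $B^2$ acting trivially on $H_c^*(M;\Orientation)$ (since $|\Aut(H_c^*(M;\Orientation))|<p^{B^2}$), after which the lower bound $\dim H^{d+1}(\tilde G;\BF_p)\le\dim H^{d+1}(\tilde G;H_c^a)$ is immediate; you instead keep all of $G$ and exploit $V^G\ne 0$ (a nonzero finite-dimensional $\BF_pG$-module over a $p$-group has nonzero invariants) to embed $\BF_p\hookrightarrow V$ and extract a lower bound on $\dim H^{d-a+1}(G;V)$ from the long exact sequence. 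Your route avoids the additive $B^2$ term in $f(d,B)$ at the price of a subtraction term and a slightly messier quadratic inequality in $r=\rank(G)$; both yield explicit bounds, and since only the existence of $f(d,B)$ matters downstream, the two are interchangeable.
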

\begin{proof} We show that $f(d,B)=B^2+d(B(d+1)-1)$ is a good choice for $f$.
	By assumption, $\big|\Aut(H_c^*(M;\Orientation)\big|<p^{B^2}$,
	hence $G$ has a subgroup $\tilde G$ of rank $\tilde r\ge\rank(G)-B^2$
	which acts trivially on $H_c^*(M;\Orientation)$.
	If $\tilde G=\{1\}$, then $\rank(G)\leq B^2$, and we are done.
	Assume that $\tilde G\neq\{1\}$.
	
	Consider the spectral sequence $\EE_t^{i,j}$ of
	\fref{prop:Leray-spectral-sequence}
	for the $\tilde G$-action on $M$.
	By assumption, $\EE_2^{i,j}=0$ for $j>d$, and
	\fref{prop:free-action-cohomology-cor} implies that
	$\EE_\infty^{i,j}=0$ for $i+j>d$.
\fref{lem:H_0_H^d_duality}  implies that $H_c^*(M;\Orientation)\neq0$.
	Let $s$ be the smallest integer such that $H_c^s(M;\Orientation)\neq0$.
	Then
	$$
	\dim \EE_2^{d+1,s}\le
	\sum_{k=1}^{d-s} \dim \EE_2^{d-k,s+k},
	$$
	so by the definition of $\EE_2$
	(see \fref{prop:Leray-spectral-sequence})
	and by \fref{prop:cohomology-of-elementary-abelian-p-groups}~\fref{item:12},
	we obtain
	$$
	\dim H^{d+1}(\tilde G;\BZ_p) \le
	\dim H^{d+1}\big(\tilde G;H_c^s(M;\Orientation)\big) \le
	$$
	$$
	\le
	\sum_{k=1}^{d-s} \dim H^{d-k}(\tilde G;\BZ_p)\cdot\dim H_c^{s+k}(M;\Orientation) \le
	$$
	$$
	\le
	(d-s)\,\dim H^{d}(\tilde G;\BZ_p)\cdot B \le
	dB\frac{d+1}{\tilde r+d}\dim H^{d+1}(\tilde G;\BZ_p).
	$$
	As $H^{d+1}(\tilde G;\BZ_p)\neq0$ by \fref{prop:cohomology-of-elementary-abelian-p-groups},
	we have $\tilde r\le dB(d+1)-d$.
	Then $\rank(G)\le \tilde r+B^2$ implies the lemma.
\end{proof}

\begin{thm}\label{thm:Mann-Su}
	For all integers $d,B>0$, there is an integer $r(d,B)$ with the
	following property.
	Let $G$ be an elementary abelian $p$-group,
	and $M$ be an effective  $G\curvearrowright$ $d$-\cm \! such that $\dim H_c^*(M;\Orientation)\le B$.
	Then the rank of $G$ is at most $r(d,B)$.
\end{thm}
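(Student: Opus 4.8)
The plan is to bootstrap from the free case (Lemma~\ref{lem:Mann-Su-free}) to the general effective case by induction, using the stratification of $M$ by stabilizer subgroups and the Smith--Floyd bounds. First I would set up the induction on $d=\dim_p(M)$; the base case $d=0$ is trivial since an effective action on a $0$-{\cm} forces $G$ to embed in a finite permutation group of bounded size (the number of points is bounded by $\dim H_c^0(M;\Orientation)\le B$). For the inductive step, pick a maximal stabilizer subgroup $H\in\Stab_{\max}(G,M)$; by \fref{lem:Subgroups-of-Stabilisers} we already know $\rank(H)\le d$, so it suffices to bound the rank of $G/H'$ for a suitable quotient, or more directly to bound $\rank(G)$ in terms of $\rank(H)$ plus a contribution from the action on a lower-dimensional piece. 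The key observation is that if we let $K$ be a \emph{minimal} stabilizer in $\Stab(G,M)$ (equivalently, by effectiveness and connectedness if $M$ were connected, $K=\{1\}$, but in general we should not assume connectedness), then $K$ acts trivially on $M$, so effectiveness forces $K=\{1\}$; hence the subset $\mathring M=\{x\in M\mid G_x=\{1\}\}$ where the action is free is open (its complement is $\bigcup_{1\ne A\le G}M^A$, a closed set since there are finitely many subgroups and each $M^A$ is closed with locally connected components by \fref{prop:Smith-Floyd}).

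Next I would apply \fref{prop:long-exact-sequence-of-closed-subset} (or the Smith-theoretic consequence \fref{prop:Smith-Floyd}, inequality~\eqref{eq:Smith_Floyd_2}) to split $M$ into the free part $\mathring M=M\setminus F$ and the fixed-point locus $F=\bigcup_{1\ne A\le G, A \text{ of order } p}M^A$. By \eqref{eq:Smith_Floyd_2} applied iteratively over the finitely many order-$p$ subgroups — or rather, more carefully, by applying Smith--Floyd to a single central $\BZ_p\le G$ and then inducting — we get $\dim H_c^*(\mathring M;\Orientation)\le C(d,B)$ for some bound depending only on $d$ and $B$. The action of $G$ on $\mathring M$ is free but need not be effective on each component; however, the kernel of the $G$-action on $\mathring M$ is a subgroup that acts trivially there, and since $\mathring M$ is open and $G$-invariant, any $g$ fixing $\mathring M$ pointwise must fix all of $M$ (by continuity, as $\mathring M$ is dense in each component meeting it — here one may need to argue component-by-component, discarding components of $M$ on which the free part is empty). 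The subtlety: components of $M$ where $\mathring M=\emptyset$ carry a $G$-action with \emph{every} point having nontrivial stabilizer, i.e. $M^A$ for some order-$p$ subgroup $A$ covers that component; picking a central such $A$ (possible since $A$ can be taken in the center as $G$ is elementary abelian, so every subgroup is central), that component lies in $M^A$, which is a {\cm} of dimension $\le d-1$ by \fref{prop:Smith-Floyd}, and $G/A$ acts on it — by induction on $d$, $\rank(G/A)\le r(d-1,2B)$, so $\rank(G)\le r(d-1,2B)+1$.

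Putting this together: either $\mathring M\ne\emptyset$, in which case Lemma~\ref{lem:Mann-Su-free} applied to the free action on $\mathring M$ (which is effective after the continuity argument above, possibly after restricting to a single component and noting the action there has kernel contained in the kernel on that component of $M$, which is trivial) gives $\rank(G)\le f(d,C(d,B))$; or $\mathring M=\emptyset$, in which case the induction on dimension gives $\rank(G)\le r(d-1,2B)+1$. Setting $r(d,B)=\max\{f(d,C(d,B)),\,r(d-1,2B)+1,\,B\}$ closes the induction. The main obstacle I anticipate is the bookkeeping around connectedness and effectiveness: the hypothesis gives effectiveness on $M$ but not on individual components, so I would either need to argue that after passing to the relevant component the action remains effective (which fails in general!), or — the cleaner route — work with the full $M$ throughout, bound $\dim H_c^*(\mathring M;\Orientation)$ globally via iterated Smith--Floyd, and handle the "no free part" case by choosing a central $\BZ_p$ whose fixed locus is \emph{all} of $M$ only when forced, otherwise passing to $M^A$ with $\dim_p(M^A)<d$ for a well-chosen central $A$. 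Verifying that the constant $C(d,B)$ from iterated \eqref{eq:Smith_Floyd_2} is finite and depends only on $d,B$ — since the number of subgroups of order $p$ in $G$ is unbounded — requires care: one must iterate over a \emph{composition series} of $G$ (length $\le$ something controlled, but $\rank(G)$ is what we're bounding!), so instead I would apply \eqref{eq:Smith_Floyd_2} only to bound $\dim H_c^*(M\setminus M^A;\Orientation)\le 2B$ for a \emph{single} central $A\cong\BZ_p$, and then recurse: $\mathring M\subseteq M\setminus M^A$ is open and $G/A$-invariant with $\dim H_c^*(M\setminus M^A;\Orientation)\le 2B$, but $M\setminus M^A$ is a $(G/A)$-space only if $A$ acts freely there, which it does not — so actually $A$ still acts on $M\setminus M^A$, now freely, and one stratifies further. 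This recursion is the technical heart; I expect it terminates because at each stage we either reduce $d$ (passing to a fixed locus) or we have isolated a free action to which Lemma~\ref{lem:Mann-Su-free} applies directly.
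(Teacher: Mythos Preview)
Your proposal has a genuine gap that you yourself identify but do not resolve: the circularity in bounding $\dim H_c^*(\mathring M;\Orientation)$. Iterating Smith--Floyd over all order-$p$ subgroups of $G$ requires a number of steps comparable to the number of such subgroups, which grows with $\rank(G)$ --- precisely the quantity you are trying to bound. Your attempted workaround (remove $M^A$ for a single central $A\cong\BZ_p$, then ``recurse'') does not terminate in boundedly many steps: on $M\setminus M^A$ the group $A$ acts freely, so you cannot pass to $G/A$ there, and you still face the same problem for the remaining subgroups. The induction on $d$ also does not help, since passing to $M\setminus M^A$ keeps the dimension at $d$.

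The paper avoids this circularity with a trick you nearly had: instead of applying \fref{lem:Mann-Su-free} to the free part $\mathring M$, apply it to the \emph{fixed locus} $M^H$ of a \emph{maximal} stabilizer $H\in\Stab_{\max}(G,M)$. Maximality of $H$ forces every point of $M^H$ to have stabilizer exactly $H$, so the quotient $G/H$ acts \emph{freely} on $M^H$. Now a \emph{single} application of Smith--Floyd (inequality~\eqref{eq:Smith_Floyd_1}) gives $\dim H_c^*(M^H;\Orientation)\le B$, with no iteration and no dependence on $\rank(G)$. Hence $\rank(G/H)\le f(d,B)$ by \fref{lem:Mann-Su-free}, while $\rank(H)\le d$ by \fref{lem:Subgroups-of-Stabilisers}, giving $\rank(G)\le f(d,B)+d$ on each connected component. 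The remaining bookkeeping --- handling the permutation action on the $\le B$ connected components and the possible non-effectiveness on individual components --- is done by passing first to the subgroup $\tilde G$ preserving each component (index $\le B!$) and then to its image $\tilde G_i$ in $\Homeo(M_i)$, noting that $\tilde G\hookrightarrow\prod_i\tilde G_i$ by effectiveness on all of $M$. No induction on $d$ is needed.
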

\begin{proof}
	We show that the function $r(d,B)=\log (B!)+B(f(d,B)+ d)$ satisfies the requirement, where $f(d,B)=B^2+d(B(d+1)-1)$ is the function defined in \fref{lem:Mann-Su-free}.
	
	Let $M_1,\dots,M_C$ be the connected components of $M$. \fref{lem:H_0_H^d_duality} yields that $C\leq B$. $G$ permutes the components of $M$,
	let $\tilde G\le G$ be the subgroup of those elements
	which map each component $M_i$ into itself. Then $|G/\tilde G|\le C!\le B!$.
	
	Let $\tilde G_i$ be the image of $G$ in the homeomorphism group of $M_i$. The group $\tilde G_i$ acts effectively on $M_i$, and since the action of $G$ on $M$ is effective, the natural homomorphism $\tilde G\to \tilde G_1\times \dots\times \tilde G_C$ is injective.		
	
	Choose a maximal stabilizer $H_i\in \Stab_{\max}(\tilde G_i,M_i)$ for each $1\le i\le C$ and denote by $F_i$ the fixed point set $M_i^{H_i}$.
	
	The quotient group $\tilde G_i/H_i$ acts freely on $F_i$.
	\fref{prop:Smith-Floyd}
	implies that $\dim H_c^*(F_i;\Orientation)\le \dim H_c^*(M_i;\Orientation)\le B,$
	so \fref{lem:Mann-Su-free} gives $\rank \tilde G_i/H_i\le f(d,B)$.

	Moreover, $\rank H_i\le d$ by \fref{lem:Subgroups-of-Stabilisers}~\fref{item:15}, thus, we have
	\[\rank G\le \rank G/\tilde G+\sum_{i=1}^{C}(\rank \tilde G_i/H_i+\rank H_i)\le \log (B!)+B(f(d,B)+ d).\]
	This  proves the theorem.
\end{proof}

By \fref{lem:rank-and-elementary-abelian-subgroups} and \fref{lem:universal_bound_on_mod_p_cohomology}, \fref{thm:Mann-Su} completes the proof of \fref{thm:Mann-Su_v0}.

\section{Proof of \fref{thm:for-p-groups,topological-manifolds} for elementary $p$-groups}
\label{sec:proof-p-groups}

\begin{defn}
  For a topological space $X$, let $\Homeo(X)$ denote the group of all
  homeomorphisms of $X$ to itself.
\end{defn}

\begin{lem} \label{lem:one-dimensional-stabilizers}
  Let $M$ be a \dcm such that $B=\dim H_c^*(M;\Orientation)<\infty$. Let $r$
  be a natural number.
  Then there is a number $C_0(d,B,r)$ (independent of $p$) with
  the following property.
  If $E<\Homeo(M)$ is an elementary abelian
  $p$-subgroup  of rank $r$ such that for every $x\in M$
  the stabilizer $E_x$ is either trivial or isomorphic to $\BZ_p$, then we have
  $$
  \big|\Stab(E,M)\big|\le C_0(d,B,r).
  $$
\end{lem}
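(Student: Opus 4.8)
The plan is to turn ``$\Stab(E,M)$ is large'' into ``the equivariant cohomology of the set of non-free points is large in degree $d$'', and then to bound the latter through $M$ by combining the Smith-type long exact sequence (\fref{prop:long-exact-sequence-of-closed-subset}) with the Leray spectral sequence (\fref{prop:Leray-spectral-sequence}).

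First I would exploit the hypothesis combinatorially. Put $S=\{H\le E:\,|H|=p,\ \Fix{M}{H}\ne\emptyset\}$. Since every $E$-stabilizer has rank $\le 1$, a subgroup of order $p$ lies in $S$ iff it is a stabilizer, and $\Stab(E,M)\subseteq\{1\}\cup S$, so it suffices to bound $|S|$. The same rank bound gives $\Fix{M}{H}\cap\Fix{M}{H'}=\Fix{M}{\langle H,H'\rangle}=\emptyset$ for distinct $H,H'\in S$, as a common fixed point would have a rank-$2$ stabilizer. Set $F=\bigcup_{H\in S}\Fix{M}{H}=\{x\in M:\,E_x\ne\{1\}\}$ and $U=M\setminus F$. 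I would record the point-set facts: $U$ is open and $E$ acts freely on $U$; $F$ is closed, since its complement is $\bigcap_{g\in E\setminus\{1\}}\{x:\,gx\ne x\}$, a finite intersection of open sets (using that $M$ is Hausdorff and $E$ acts by homeomorphisms); and each $\Fix{M}{H}$ with $H\in S$ is clopen in $F$, being closed in $M$ with $F\setminus\Fix{M}{H}=\bigcup_{g\in E\setminus H}\Fix{M}{g}$ a finite union of closed sets. By \fref{prop:Smith-Floyd} the connected components of each $\Fix{M}{H}$ are {\cm}'s with $\Orientation[M]|_{\Fix{M}{H}}\isom\Orientation[\Fix{M}{H}]$; gluing over the clopen pieces gives $F$ an orientation sheaf and an isomorphism $\Orientation[M]|_F\isom\Orientation[F]$, while $\Orientation[M]|_U\isom\Orientation[U]$ by \fref{prop:Orientation-canonical}. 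All these are $E$-sheaves by \fref{prop:G-structure on orientaion_sheaf}. (This is the package behind \fref{prop:long-exact-sequence-of-closed-subset-cor}; I use the general \fref{prop:long-exact-sequence-of-closed-subset} because $S$ is not yet known to be finite.)

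Next I would run the degree-$d$ segment of the long exact sequence of \fref{prop:long-exact-sequence-of-closed-subset} for $\CA=\Orientation[M]$ and the closed invariant set $F$,
\[
H_{E,c}^d(U;\Orientation[U])\to H_{E,c}^d(M;\Orientation[M])\to H_{E,c}^d(F;\Orientation[F])\to H_{E,c}^{d+1}(U;\Orientation[U]).
\]
By \fref{prop:free-action-cohomology-cor}~\fref{item:8} and freeness of the $E$-action on $U$, the last term is $H_c^{d+1}(U/E;\Orientation)$, which vanishes because $U/E$ is a cohomology manifold of dimension $\le d$ (or empty); hence $H_{E,c}^d(M;\Orientation[M])$ surjects onto $H_{E,c}^d(F;\Orientation[F])$. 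To bound the target below I would use the clopen decomposition $F=\bigsqcup_{H\in S}\Fix{M}{H}$, so that $H_{E,c}^d(F;\Orientation[F])\isom\bigoplus_{H\in S}H_{E,c}^d(\Fix{M}{H};\Orientation[\Fix{M}{H}])$, and then note that for each $H\in S$ every point of $\Fix{M}{H}$ has $E$-stabilizer exactly $H$, so $E/H$ acts freely on $\Fix{M}{H}$; choosing a complement $E=K\times H$, \fref{prop:free-action-cohomology-cor}~\fref{item:9} gives $H_{E,c}^*(\Fix{M}{H};\Orientation)\isom H^*(H;\BF_p)\otimes H_c^*(\Fix{M}{H}/E;\Orientation)$. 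As $H\isom\BZ_p$ we have $\dim H^a(H;\BF_p)=1$ for all $a\ge0$, and $H_c^*(\Fix{M}{H}/E;\Orientation)\ne0$ since $\Fix{M}{H}/E$ is a nonempty cohomology manifold (nonzero in the top degree of a maximal-dimensional component, as in \fref{lem:H_0_H^d_duality}); hence each summand is nonzero in degree $d$, and $\dim H_{E,c}^d(F;\Orientation[F])\ge|S|$.

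Finally I would bound $\dim H_{E,c}^d(M;\Orientation)$ from above using the Leray spectral sequence $\BE_2^{i,j}=H^i(E;H_c^j(M;\Orientation))\Longrightarrow H_{E,c}^{i+j}(M;\Orientation)$: since $H_c^j(M;\Orientation)=0$ for $j>d$, $\sum_j\dim H_c^j(M;\Orientation)=B$, and by \fref{prop:cohomology-of-elementary-abelian-p-groups}~\fref{item:14} and~\fref{item:12},
\[
\dim H_{E,c}^d(M;\Orientation)\le\sum_{j=0}^{d}\binom{d-j+r-1}{r-1}\dim H_c^j(M;\Orientation)\le\binom{d+r-1}{r-1}\,B.
\]
Combining the three displays, $|S|\le\binom{d+r-1}{r-1}B$, hence $\big|\Stab(E,M)\big|\le\binom{d+r-1}{r-1}B+1=:C_0(d,B,r)$, which is independent of $p$. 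I expect the main obstacle to be the opening step: one must prove that $F$ is closed and that the $\Fix{M}{H}$ are clopen in it \emph{before} knowing $S$ is finite, since this is exactly what makes the infinite direct sum legitimate and lets the finite-dimensional estimate for $H_{E,c}^d(M;\Orientation)$ cap $|S|$; the remaining verifications — compatibility of the various orientation-sheaf identifications, $E$-equivariance, and the maps in the two spectral sequences and exact sequences — are routine given the functoriality statements recalled above.
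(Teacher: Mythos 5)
Your proof is correct and follows essentially the same route as the paper's: you sandwich the number of order-$p$ stabilizers between a lower bound on the equivariant cohomology of the non-free locus $F$ (via the equivariant long exact sequence together with \fref{prop:free-action-cohomology-cor}) and an upper bound on $H^*_{E,c}(M;\Orientation)$ coming from the spectral sequence of \fref{prop:Leray-spectral-sequence} and \fref{prop:cohomology-of-elementary-abelian-p-groups}. The only (harmless) differences are that you work in degree $d$, where a surjection $H^d_{E,c}(M)\twoheadrightarrow H^d_{E,c}(F)$ suffices, while the paper works in degree $d+1$, where both adjacent terms for $U$ vanish and one gets an isomorphism; and your worry about $S$ being a priori infinite is moot, since $E$ is a finite group and so has only finitely many subgroups.
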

\begin{proof}
Set $$C_0(d,B,r):=B\sum_{i=0}^{d+1}\dim H^i(E;\BF_p)=B\sum_{i=0}^{d+1}\binom{ i+r-1}{r-1}=B\binom{r+d+1}{r}.$$
Let $\{L_1,\dots,L_k\}$ be the elements of $\Stab(E,M)$ isomorphic to $\BZ_p$.
Define $M_i=M^{L_i}$ and $M^*=\bigcup_iM_i$. Since $E$ acts freely
on $M\setminus M^*$, the inclusion $M^*\hookrightarrow M$ induces an
isomorphism
$$H_{E,c}^{d+1}(M^*;\Orientation)\isom H_{E,c}^{d+1}(M;\Orientation)$$
by \fref{prop:long-exact-sequence-of-closed-subset-cor} and \fref{prop:free-action-cohomology-cor}~(\ref{item:8}), so we have
$$\dim H_{E,c}^{d+1}(M^*;\Orientation)=\dim H_{E,c}^{d+1}(M;\Orientation).$$
The existence of a
spectral sequence converging to $H_{E,c}^*(M;\Orientation)$,
with second page $\BE_2^{i,j}=H^i(E;H^j_c(M;\Orientation))$,
(see \fref{prop:Leray-spectral-sequence}), and
\fref{prop:cohomology-of-elementary-abelian-p-groups} \fref{item:14} imply that
\begin{equation}
\label{eq:upper-bound}
\dim H_{E,c}^{d+1}(M;\Orientation)\leq B\sum_{i=0}^{d+1}\dim H^i(E;\BF_p)=C_0(d,B,r).
\end{equation}
The fixed point sets $M_1,\dots,M_r$ are disjoint, for if some point $x$ belonged
to $M_i\cap M_j$ with $i\neq j$, then $E_x$ would contain $L_iL_j\isom\BZ_p^2$, a contradiction.
For each $i$, the group $L_i$ acts trivially on $M_i$ and $E/L_i$ acts freely on $M_i$.
So the connected components of the quotient $N_i:=M_i/(E/L_i)$ are {\cm}'s and hence $N_i$ carries an orientation sheaf $\Orientation$.
Applying \fref{prop:free-action-cohomology-cor} \fref{item:9}, we have
\begin{align*}
H_{E,c}^{d+1}(M^*;\Orientation)  &=\bigoplus_{i=1}^kH_{E,c}^{d+1}(M_i^*;\Orientation)
\isom \bigoplus_{i=1}^k H_{L_i,c}^{d+1}(N_i;\Orientation) \\
&\isom \bigoplus_{i=1}^k \bigoplus_{u+v=d+1}H^u(L_i;\BF_p)\otimes H^v_c(N_i;\Orientation). 
\end{align*}
Let $N'_i$ be a connected component of $N_i$, and let $s=\dim N'_i$.
Then $H^s_c(N_i';\Orientation)\neq 0$. One can further
decompose the above expression for $H_{E,c}^{d+1}(M^*;\Orientation)$
in terms of contributions from each connected component of $N_i$,
and one of the summands is $H^{d+1-s}(L_i;\BF_p)\otimes H^s_c(N_i';\Orientation)$.
Since $L_i\isom\BZ_p$ and $s\leq d$ we have $H^{d+1-s}(L_i;\BF_p)\isom\BF_p$, so
$$H^{d+1-s}(L_i;\BF_p)\otimes H^s_c(N_i';\Orientation)\neq 0,$$
and hence $H_{E,c}^{d+1}(M_i^*;\Orientation)\isom H_{L_i,c}^{d+1}(N_i;\Orientation)\neq 0$. It follows that
$$\dim H_{E,c}^{d+1}(M^*;\Orientation)\geq k.$$
Combining this with \fref{eq:upper-bound} we obtain $k\leq C_0(d,B,r)$.
\end{proof}

\begin{lem} \label{lem:for-elementary-abelian-p-groups}
  Let $M$ be a \dcm such that $B=\dim H_c^*(M;\Orientation)<\infty$.
  Then there is a number $C_1(d,B)$ (independent of $p$)
  such that for every elementary abelian
  $p$-subgroup $E<\Homeo(M)$ we have
  $$
  \big|\Stab(E,M)\big|\le C_1(d,B).
  $$
\end{lem}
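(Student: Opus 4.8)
The plan is an induction on $d=\dim_p(M)$; the case $d=0$ is immediate, since then $M$ is discrete with at most $B$ points by \fref{lem:H_0_H^d_duality}, so $|\Stab(E,M)|\le B$. For the inductive step I would first reduce to the case in which $M$ is connected and $E$ acts effectively: $M$ has at most $B$ components, $\Stab(E,M)$ is the union of the $\Stab(E,M_i)$ over the components $M_i$, and replacing $E$ by the subgroup preserving a component and passing to the quotient by the kernel of its action on that component changes neither $\dim H_c^*(M_i;\Orientation)$ nor the relevant set of stabilizers, at the cost of a multiplicative constant depending on $B$ alone. In the connected effective case \fref{thm:Mann-Su} bounds $r:=\rank(E)$ in terms of $d$ and $B$.

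The structural heart is the claim that \emph{every minimal nontrivial stabilizer of $E\curvearrowright M$ has rank one}. To see this, suppose $K=E_x$ has $\rank(K)\ge 2$. Since $E$ is finite and acts on a Hausdorff space, $x$ has a neighbourhood $V$ with $E_y\le E_x$ for all $y\in V$. Writing $d'(H)$ for the dimension of the component of $x$ in $M^{H}$, effectiveness forces $d'(K)<d=d'(\{1\})$; moreover, if $d'(L)=d'(K)$ held for every order-$p$ subgroup $L<K$, then using $M^{H}=\bigcap_{L\le H}M^{L}$ and invariance of dimension for cohomology manifolds one gets $d'(H)=d'(K)$ for all $H\le K$, in particular $d'(\{1\})=d'(K)$, a contradiction. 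So there is an order-$p$ subgroup $L<K$ with $d'(L)>d'(K)$, and applying \fref{prop:Borel-fixpoint-formula} to the effective quotient of $K/L$ acting on the component of $x$ in $M^{L}$ produces a corank-one subgroup $\tilde H<K$ with $d'(\tilde H)>d'(K)$; points of the component of $x$ in $M^{\tilde H}$ near $x$ but off $M^{K}$ then have stabilizer exactly $\tilde H$, so $\tilde H$ is a nontrivial stabilizer strictly inside $K$ and $K$ is not minimal. Consequently $M^{\mathrm{sing}}:=\{x\in M:E_x\ne 1\}$ equals $\bigcup_{L\in\mathcal L}M^{L}$, where $\mathcal L$ is the finite set of order-$p$ subgroups occurring as stabilizers; it is a closed $E$-invariant space all of whose components are cohomology manifolds of dimension $\le d-1$ by \fref{prop:Smith-Floyd}, and $\Stab(E,M)=\{\{1\}\}\cup\Stab(E,M^{\mathrm{sing}})$. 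After the same reduction to the effective case as above, the inductive hypothesis applied to $M^{\mathrm{sing}}$ bounds $|\Stab(E,M^{\mathrm{sing}})|$ in terms of $d-1$ and $\dim H_c^*(M^{\mathrm{sing}};\Orientation)$, and the latter is controlled by $B$ and $m:=|\mathcal L|$ via \fref{prop:Smith-Floyd} applied to each $M^{L}$ — once $m$ is bounded.

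It remains to bound $m$, the number of rank-one stabilizers, by a function of $d$ and $B$. The open set $V_{\le 1}:=\{x\in M:\rank(E_x)\le 1\}$ is a $d$-dimensional cohomology manifold on which all stabilizers are trivial or $\BZ_p$ and on which every rank-one stabilizer already occurs, so $m\le|\Stab(E,V_{\le 1})|\le C_0\big(d,\dim H_c^*(V_{\le 1};\Orientation),r\big)$ by \fref{lem:one-dimensional-stabilizers}; hence it suffices to bound $\dim H_c^*(V_{\le 1};\Orientation)$. This is the delicate point and the main obstacle: $V_{\le 1}=M\setminus F_2$ with $F_2:=\{x:\rank(E_x)\ge 2\}$ again a union of fixed point sets (now of rank-two subgroups), and a crude Mayer--Vietoris estimate for $H_c^*$ over those fixed sets loses a factor exponential in $m$, which is circular. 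I expect this to be resolved by working equivariantly: on $M\setminus M^{\mathrm{sing}}$ the action is free, so $H_{E,c}^{>d}(M\setminus M^{\mathrm{sing}};\Orientation)\cong H_c^{>d}\big((M\setminus M^{\mathrm{sing}})/E;\Orientation\big)=0$ by \fref{prop:free-action-cohomology-cor}~\fref{item:8}, and the long exact sequences of \fref{prop:long-exact-sequence-of-closed-subset} for the pairs $\big(V_{\le 1},M\setminus M^{\mathrm{sing}}\big)$ and $(M,F_2)$ then tie $H_{E,c}^{d+1}(V_{\le 1};\Orientation)$ — which dominates $H_{E,c}^{d+1}$ of each rank-one stratum and hence is $\ge m$ — to $H_{E,c}^{d+1}(M;\Orientation)$ (bounded by \fref{prop:Leray-spectral-sequence} and \fref{prop:cohomology-of-elementary-abelian-p-groups}) plus a correction term supported on $F_2$, the latter being controlled by re-running the argument on the lower-dimensional space $F_2$ so that the recursion genuinely decreases $\dim M$ rather than iterating over the possibly $p$-dependently many rank-two subgroups. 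The careful bookkeeping in these exact sequences — tracking which maps are surjective because of the freeness — is where the real work lies.
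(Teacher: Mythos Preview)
Your approach is genuinely different from the paper's: you induct on $d=\dim_p(M)$, while the paper inducts on $r=\rank(E)$. Your structural claim that minimal nontrivial stabilizers have rank one is correct (it follows directly from Borel's formula, \fref{prop:Borel-fixpoint-formula}, applied to $K$ acting on $M$ at $x$; your intermediate step about $d'(H)=d'(K)$ for all $H\le K$ is slightly mis-stated, since it fails for $H=\{1\}$, but Borel gives the needed corank-one $\tilde H$ immediately). The reduction of $|\Stab(E,M)|$ to $m\cdot C_1(d-1,B)$ via the $M^L$'s is also fine, provided $m$ is bounded --- note that you should sum over the individual $M^L$'s rather than apply the hypothesis to $M^{\mathrm{sing}}$ as a whole, since $M^{\mathrm{sing}}$ is typically not a cohomology manifold where the $M^L$'s meet.

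The real gap is exactly where you flag it: bounding $m$. Your proposed equivariant argument does give $m\le \dim H_{E,c}^{d+1}(M;\Orientation)+\dim H_{E,c}^{d}(F_2;-)$, with the first term bounded by $B$ and $r$ via \fref{prop:Leray-spectral-sequence}. But ``re-running the argument on $F_2$'' does not close the loop: $F_2$ is a union of possibly $p$-dependently many fixed-point sets $M^K$ with $\rank K=2$, it is not a cohomology manifold, and neither $\dim H_c^*(F_2;\Orientation)$ nor $\dim H_{E,c}^d(F_2;-)$ is controlled by any quantity you have in hand. The Leray spectral sequence for $F_2$ needs $\dim H_c^*(F_2)$ as input, and decomposing $F_2$ into strata or into the $M^K$'s reintroduces the $p$-dependent count you are trying to avoid. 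As written, the recursion has no bounded invariant that decreases.

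The paper sidesteps this entirely. Inducting on $r$ instead, it first bounds $|\Stab_{\ge2}(E,M)|$ by a linear-algebra trick: choose hyperplanes $H_1,\dots,H_r\le E$ with $\bigcap_j H_j=\{0\}$; any $F$ with $\dim F\ge2$ satisfies $F=(F\cap H_1)+(F\cap H_j)$ for some $j\ge2$, so $F$ is determined by a pair in $\Stab(H_1,M)\times\Stab(H_j,M)$, each of size $\le C_1(d,B,r-1)$ by induction. With $|\Stab_{\ge2}|$ bounded, one removes those boundedly many fixed-point sets, keeps $H_c^*$ bounded via \fref{eq:Smith_Floyd_2}, and applies \fref{lem:one-dimensional-stabilizers} to the remainder. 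This avoids ever needing to control the cohomology of a $p$-dependent union like $F_2$.
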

\begin{proof}
Let us first prove that for any natural number $r$, there is a number $C_1(d,B,r)$ (independent of $p$)
  such that for every elementary abelian
  $p$-subgroup $E<\Homeo(M)$ of rank $r$, we have
  $\big|\Stab(E,M)\big|\le C_1(d,B,r)$.

  We use induction on $r$.

  The case $r=1$ is obvious. Suppose that $r=2$, so let $E<\Homeo(M)$ be isomorphic to $\BZ_p^2$.
  Since $E$ acts effectively on $M$, it also acts effectively on
  $M\setminus M^E$, and hence we can identify $E$ with a subgroup of $\Homeo(M\setminus M^E)$.
  By \fref{prop:Smith-Floyd}, $\dim H^*_c(M\setminus M^E;\Orientation)\leq 2B$.
  Finally, the stabilizers of the action of $E$ on any point of $M\setminus M^E$ are either trivial
  or isomorphic to $\BZ_p$. Hence by \fref{lem:one-dimensional-stabilizers} we have
  $$\big|\Stab(E,M\setminus M^E)\big|\le C_0(d,2B,2).$$
  Since $\Stab(E,M)\setminus \Stab(E,M\setminus M^E)$
  contains at most one element (namely, $E$, in case $M^E\neq\emptyset$) we conclude in this case
  $$\big|\Stab(E,M)\big|\le C_0(d,2B,2)+1.$$

  We now prove the induction step. Assume that $r\geq 3$ and that the existence of the constant
  $C_1(d,B,r-1)$ has already been proved. Let $E<\Homeo(M)$ be an elementary abelian
  $p$-group of rank $r$. It will be convenient to look $E$ as an $r$-dimensional vector space
  over $\BF_p$. Let
  $$\Stab_{\ge 2}(E,M)=\{F\in\Stab(E,M)\mid \dim F\ge 2\}.$$
  We will first bound the size of $\Stab_{\ge 2}(E,M)$.
  Choose hyperplanes $H_1,\dots,H_r<E$ such that $H_1\cap\dots\cap H_r=\{0\}$.
  For every $j\geq 2$ let
  $$\CS_j=\{F\in \Stab_{\ge 2}(E,M)\mid F\cap H_1\nleq H_1\cap H_j\}.$$
  It is clear that any $F\in\CS_j$ satisfies
  $F=(F\cap H_1)+(F\cap H_j)$.
  Hence each $F\in\CS_j$ is uniquely characterized by its intersections
  with $H_1$ and $H_j$. Since we have
  $F\cap H_i\in\Stab(H_i,M)$ for $i=1,j$, the induction hypothesis implies that
  $$|\CS_j|\leq C_1(d,B,r-1)^2.$$
  But $\Stab_{\ge 2}(E,M)=\bigcup_{j\geq 2}\CS_j$; indeed, if
  $F\in \Stab_{\ge 2}(E,M)$ does not belong to $\bigcup_{j\ge 2}\CS_j$, then
  $F\cap H_1$ is contained in $H_1\cap H_2\cap\dots\cap H_r=\{0\}$, which
  implies $\dim F\cap H_1=0$, a contradiction. It follows that
  $$|\Stab_{\ge 2}(E,M)|\leq (r-1)C_1(d,B,r-1)^2.$$
  Now let
  $$M^*_1=M\setminus\bigcup_{F\in \Stab_{\ge 2}(E,M)}M^F.$$
  Applying repeatedly inequality \fref{eq:Smith_Floyd_2}, we obtain
  $$\dim H^*_c(M^*_1;\Orientation)\leq 2^{(r-1)C_1(d,B,r-1)^2}B.$$
  We have
  $$\Stab(E,M)=\Stab(E,M^*_1)\cup \Stab_{\ge 2}(E,M),$$
  and the stabilizers of the action of $E$ on $M^*_1$ are either trivial or isomorphic
  to $\BZ_p$. Hence by \fref{lem:one-dimensional-stabilizers}
  the size of $\Stab(E,M)$ is at most
  $$C_1(d,B,r):=C_0(d,2^{(r-1)C_1(d,B,r-1)^2}B,r)+(r-1)C_1(d,B,r-1)^2.$$
  This finishes the induction step, so the proof of the existence of the constants
  $C_1(d,B,r)$ is now complete.

  By \fref{thm:Mann-Su_v0}
  the rank of the elementary $p$-subgroups $E<\Homeo(M)$ is bounded independently
  of $p$, say by a constant $R$. Hence the number $C_1(d,B)=\max_{1\leq r\leq R}C_1(d,B,r)$
  has the property stated in the lemma.
\end{proof}

\section{Proof of \fref{thm:for-p-groups,topological-manifolds} for general $p$-groups}
\label{sec:proof-general}

Our proof is inductive.
In order to make the induction working,
we prove a slightly more general statement:

\begin{thm} \label{thm:for-p-groups,cohomology-manifolds}
  Let $M$ be a \dcm such that $B=\dim H_c^*(M;\Orientation)<\infty$.
  Then there is a number $\tilde C=\tilde C(d+B)$ (independent of $p$)
  with the following property.
  Each finite $p$-group $G\le\Homeo(M)$
  has a characteristic subgroup $H\le G$ of index at most $\tilde C$
  containing the center $\CZ(G)$
  such that
  $$
  \big|\Stab(H,M)\big|\le \tilde C.
  $$
\end{thm}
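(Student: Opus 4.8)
The plan is to prove Theorem~\ref{thm:for-p-groups,cohomology-manifolds} by induction on the order of $G$. The base case $G=\{1\}$ is trivial. For the inductive step, the dichotomy is whether $G$ is elementary abelian or not. If $G$ is elementary abelian, we simply take $H=G$: by Lemma~\ref{lem:for-elementary-abelian-p-groups} we have $|\Stab(G,M)|\le C_1(d,B)$, and $G$ is certainly characteristic in itself and contains its own center, so $\tilde C$ must be chosen at least as large as $C_1(d,B)$ in this case. The substantive case is when $G$ is not elementary abelian.

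Assume $G$ is not elementary abelian. Consider the quotient $\bar G=G/\Phi(G)$ by the Frattini subgroup, which is elementary abelian, and let $E$ be the image in $\Homeo(M^{\Phi(G)})$ of the induced action --- wait, more carefully: the natural choice is to use a nontrivial characteristic subgroup. I would set $A$ to be a nontrivial characteristic elementary abelian subgroup contained in the center --- for instance $\Omega_1(\CZ(G))$, the subgroup of central elements of order dividing $p$, which is characteristic in $G$ and nontrivial since $G$ is a nontrivial $p$-group. For each subgroup $K\le A$, the fixed point set $M^K$ is a union of cohomology manifolds by Proposition~\ref{prop:Smith-Floyd}, with $\dim H_c^*(M^K;\CO_p)\le B$, and since $A$ is central, each $M^K$ is $G$-invariant. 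The idea is then to stratify $M$ according to the stabilizer in $A$: the locus where the $A$-stabilizer is all of $A$ is $M^A$, on which $G$ acts through $G/A$, a smaller $p$-group; and the open complement $M\setminus M^A$, on which we want to run the argument for the action of $G$. But $G$ may still act with large $A$-stabilizers on intermediate strata, so the cleaner approach is: partition $M$ using the various fixed sets $M^K$ for $K\le A$, $K\ne\{1\}$, let $F=\bigcup_{\{1\}\ne K\le A}M^K=M^{A_1}\cup\cdots$ where the union ranges over the (finitely many, at most $(p^{\rank A}-1)/(p-1)$) subgroups of order $p$ in $A$, and let $U=M\setminus F$. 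On $U$, the group $A$ acts freely, so $G$ acts on $U$ with all stabilizers intersecting $A$ trivially --- this is the situation to which the elementary-abelian machinery and an inductive argument on $G/(\text{something})$ can be applied, while on $F$, which is a union of lower-dimensional or cohomologically-bounded cohomology manifolds on which $G$ acts through quotients, we apply the induction hypothesis.

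Concretely, I would proceed as follows. Pick $A=\Omega_1(\CZ(G))$, characteristic, central, elementary abelian, nontrivial, of some rank $a$. Let $\mathcal{K}=\{K_1,\dots,K_m\}$ be the subgroups of $A$ of order $p$, so $m\le (p^a-1)/(p-1)$ --- but this is not bounded independently of $p$, which is a problem, so instead I would argue more carefully, perhaps passing first to the case $\rank A$ is bounded (it is, by Theorem~\ref{thm:Mann-Su_v0} applied to $G$, or rather its effective quotient) and controlling things via equivariant cohomology as in Lemma~\ref{lem:one-dimensional-stabilizers} rather than by naive counting. The fixed locus $F=\bigcup_i M^{K_i}$ is $G$-invariant and closed; each component is a \cm{} of dimension $\le d$ with total cohomology $\le B$ (iterating~\eqref{eq:Smith_Floyd_1}), and $G$ acts on $F$ through $G/K_i$ on each piece. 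For the open part $U=M\setminus F$: iterating~\eqref{eq:Smith_Floyd_2} gives $\dim H_c^*(U;\CO_p)\le 2^m B$; on $U$ the central subgroup $A$ acts freely. Now for the $G$-action on $U$: let $G'=G/A$ and note $U/A$ is a \cm{} with bounded cohomology via Proposition~\ref{prop:free-action-cohomology-cor}(a), on which $G'$ acts; apply the induction hypothesis to $G'\curvearrowright U/A$ to get a characteristic $\bar H'\le G'$ of bounded index with $|\Stab(\bar H',U/A)|$ bounded, pull back to $H'\le G$ containing $A=\CZ$-part. Similarly apply induction to each $G/K_i\curvearrowright (\text{components of }M^{K_i})$. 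Then intersect all these subgroups together with the elementary-abelian bound to produce a single characteristic subgroup $H$ of bounded index; finally bound $|\Stab(H,M)|$ by relating stabilizers on $M$ to stabilizers on $U$ and on $F$ via the stratification, using that a stabilizer $H_x$ for $x\in U$ meets $A$ trivially (hence $H_x\hookrightarrow H/A$ and corresponds to a stabilizer of the $H/A$-action on $U/A$), while for $x\in F$, $H_x$ corresponds to a stabilizer of an action on a component of some $M^{K_i}$.

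The main obstacle I anticipate is twofold: first, \emph{obtaining bounds independent of $p$} --- the number $m$ of order-$p$ subgroups of $A$ grows with $p$, so the crude iteration of~\eqref{eq:Smith_Floyd_2} and crude counting of strata are not admissible; one must instead bound $\rank A$ (and hence reorganize), use the equivariant-cohomology spectral-sequence argument (as in Lemma~\ref{lem:one-dimensional-stabilizers}) which is $p$-uniform, and possibly replace $A$ by a rank-one central subgroup $\BZ_p$ so that $F=M^A$ is a single fixed set with only two strata. Using a central $\BZ_p$ makes the stratification clean ($F=M^A$, $U=M\setminus M^A$, $A$ free on $U$), at the cost that $G/A$ may not be much smaller --- but $|G/A|<|G|$ strictly, so the induction still runs. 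Second, \emph{assembling the characteristic subgroup}: the subgroups produced by the inductive hypothesis on various quotients must be pulled back and intersected in a way that keeps the result characteristic in $G$ and of bounded index, and one must verify the center $\CZ(G)$ survives --- this is where choosing $A\subseteq\CZ(G)$ and tracking that characteristic-in-$G/A$ subgroups pull back to characteristic-in-$G$ subgroups (standard, since $A$ is characteristic) does the job, and $\CZ(G)/A\subseteq\CZ(G/A)$ lands inside the pulled-back subgroup by the induction hypothesis. The bookkeeping to show the final index and stabilizer count depend only on $d+B$ (via the recursion $\tilde C(n)$ in terms of $\tilde C$ of smaller arguments, $C_1$, and $C_0$) is routine once the $p$-uniformity is secured.
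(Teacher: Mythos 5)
There is a genuine gap, and it sits at the heart of the theorem. Your plan for the open stratum $U=M\setminus F$, where the central subgroup $A$ acts freely, is to apply the inductive hypothesis to $G/A$ acting on $U/A$ and then ``pull back'': you assert that a stabilizer $H_x$ with $x\in U$ meets $A$ trivially, hence embeds in $H/A$ and ``corresponds to a stabilizer of the $H/A$-action on $U/A$.'' The correspondence $H_x\mapsto AH_x/A=(H/A)_{\bar x}$ is indeed well defined, but it is badly non-injective on the set of stabilizers: since $A$ is central and $H_x\cap A=\{1\}$, the group $AH_x$ splits as $A\times H_x$, and the subgroups of $AH_x$ mapping onto $AH_x/A$ and meeting $A$ trivially are the graphs of homomorphisms $H_x\to A$, of which there can be as many as $p^{\rank(H_x)}$ --- unbounded in $p$. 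So a bound on $\big|\Stab(H/A,U/A)\big|$ does not bound $\big|\Stab(H,U)\big|$. This is precisely the phenomenon of \fref{rem:dihedral}: an action can have $p^{(p-1)n}$ distinct stabilizers all of which intersect a fixed normal subgroup trivially and all of which become indistinguishable after quotienting. Your stratification handles the $F$-stratum correctly (stabilizers there contain some $K_i\le A$ and do descend bijectively to the quotient action), but the $U$-stratum is exactly where the difficulty lives, and the proposal offers no mechanism for it.

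The paper's proof supplies that missing mechanism with two purely group-theoretic claims, which have no counterpart in your outline. After establishing (by induction on $d+B$, not on $|G|$: the fixed sets $M^L$ of nontrivial central stabilizers $L$ have strictly smaller dimension or strictly smaller total cohomology, which is what drives the recursion) that there are boundedly many stabilizers meeting $\CZ(G)$ nontrivially, one must show that, after passing to a characteristic subgroup of bounded index, \emph{every} nontrivial stabilizer meets the center. For $p$ large this is \fref{claim:for-large primes}: an induction on $|G|$ along the upper central series, using that the number of $G$-conjugates of a subgroup of a normal subgroup is $1$ or divisible by $p$, to force a putative center-avoiding stabilizer to contain a normal, hence central, subgroup of order $p$ --- a contradiction. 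For $p$ bounded it is \fref{claim:for-small-primes}, which invokes Alperin's theorem (\fref{prop:Alperin-centralizers}) on maximal normal abelian subgroups of bounded exponent to replace $G$ by a bounded-index centralizer in which every element of order $p$ is central. Without an argument of this kind, the free stratum cannot be controlled, and your recursion does not close. The bookkeeping you describe as ``routine'' is indeed routine, but only once this obstruction is removed.
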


\begin{proof}
  The proof is by induction on $d$ and $B$.
  If $d=0$, then $|M|\le B$, hence $|G|\le  B!$ and
  $\big|\Stab(G,M)\big|\le 2^{B!}$, so the statement holds.

  Now assume that $d>0$, and the statement holds for \cm's of
  dimension $<d$, and for each \dcm \ N with $\dim H_c^*(N;\Orientation)<B$.

  Let $M$ be as in the statement.
  To complete the induction step,
  we need three claims:

  \begin{claim} \label{claim:intersecting-with-an-abelian-subgroup}
    There is a number $C_2(d+B)$ (independent of $p$)
    such that every finite $p$-group $G<\Homeo(M)$
    has a characteristic subgroup $H\le G$ of index at most $C_2(d+B)$,
    containing $\CZ(G)$, such that
    $$
    \Big|\left\{K\in\Stab(H,M)\,\Big|\, K\cap \CZ(G)\ne\{1\}\,\right\}\Big|
    \le C_2(d+B).
    $$
  \end{claim}
  \begin{proof}
    Let $E\le\CZ(G)$ be the subgroup of elements of order $p$.
    First we pick a subgroup $L\in\Stab(E,M)$ different from $\{1\}$.

    Denote by $M^L_s$ the union of the $s$-dimensional connected components of $M^L$.
    By assumption $M^L\ne M$, hence if $M^L_s\neq \emptyset$, then either $s<d$  and $\dim H_c^*(M^L_s;\Orientation)\leq \dim H_c^*(M^L,\Orientation)\le B$  by \fref{prop:Smith-Floyd} or $s=d$ and $\dim H_c^*(M^L_s;\Orientation)<\dim H_c^*(M,\Orientation) = B$.

    In both cases the induction hypothesis applies to $M^L_s$ and the image of $G$ in $\Homeo(M^L_s)$, and gives us a subgroup $H_s\le G$
    of index at most $\tilde C(d+B-1)$ containing $\CZ(G)$ such that
    $$
    \Big|\Stab(H_s,M^L_s)\Big| \le \tilde C(d+B-1).
    $$
    Let $H$ be the intersection of all subgroups of $G$
    of index at most $\tilde C(d+B-1)$ containing $\CZ(G)$.
    It is characteristic, and setting
    \begin{align*}
      \CS_{H,L} &= \left\{K\in\Stab(H,M)\,\big|\, K\cap E = L\right\}\\&\subseteq \bigcup_{s=0}^d\left\{K\cap H\,\big|\, K\in \Stab(H_s,M^L_s)\right\},
    \end{align*}
    we have
    $$
    \big|\CS_{H,L}\big|\le (d+1)\tilde C(d+B-1).
    $$
    Moreover,
    by \fref{thm:Mann-Su_v0} the rank of $G$ is at most $r(d,B)$,
    so by \cite[Corollary 1.1.2]{lubotzky-Segal-SubgroupGrowth}
    the number of subgroups of $G$ of index at most $\tilde C(d+B-1)$
    is bounded by
    \[
      x(d+B) = \tilde C(d+B-1)^2\left(\tilde C(d+B-1)!\right)^{r(d,B)-1} ,
    \]
    and the index of $H$ in $G$ is at most $\tilde C(d+B-1)^{x(d+B)}$.
    Since
    $$
    \left\{K\in\Stab(H,M)\,\Big|\, K\cap \CZ(G)\ne\{1\}\,\right\} =
    \bigcup_{\genfrac{}{}{0pt}{}{L\in\Stab(E,M),}{ L\ne\{1\}}}\CS_{H,L},
    $$
    \fref{lem:for-elementary-abelian-p-groups}
    implies the claim.
  \end{proof}

  \begin{claim} \label{claim:for-large primes}
    If $p>C_2(d+B)$,
    then for each finite $p$-subgroup $G<\Homeo(M)$,
    every stabilizer $K\in\Stab(G,M)$ with $|K|>1$
    intersects $\CZ(G)$ non-trivially:
    $$
    K\cap\CZ(G)\ne\{1\}.
    $$
  \end{claim}
  \begin{proof}
    We prove this by induction on $|G|$.
    To begin with, the claim holds if $G$ is abelian.
    For the induction step
    we assume that the claim holds for all groups smaller than $G$.

    Suppose there is a subgroup $K\in\Stab(G,M)$
    with $K\cap\CZ(G)=\{1\}$.

    Let $\CZ_0=\{1\}\le\CZ_1\le\CZ_2\le\dots$ denote the upper central
    series of $G$, i.e.
    $\CZ_{k+1}/\CZ_k=\CZ\big(G/\CZ_k\big)$ for all $k\ge0$.
    Since $G$ is a $p$-group, the union of these subgroups is $G$.
    In particular, one of them contains $K$.

    Let $t$ be the largest index such that $K\cap\CZ_t=\{1\}$.
    By assumption $t\ge1$.
    Choose an element $h\in K\cap\CZ_{t+1}$ of order $p$.
    Let $L$ be the subgroup generated by $\CZ_t$ and $h$.
    By construction $\gen{h}=K\cap L\in\Stab(L,M)$.

    Since $L/\CZ_t$ is central in $G/\CZ_t$,
    its pre-image $L$ must be normal in $G$.
    Since $\CZ_t/\CZ_{t-1}=\CZ\big(G/\CZ_{t-1}\big)$
    and $L/\CZ_t$ is cyclic,
    we obtain that $L/\CZ_{t-1}$ is abelian,
    and $G/\CZ_{t-1}$ is non-abelian.
    In particular, $L\ne G$.

    By the induction assumption the claim is valid for $L$,
    and combining this with
    \fref{claim:intersecting-with-an-abelian-subgroup}
    we obtain a subgroup $\tilde L\le L$ of index less than $p$
    such that $\big|\Stab(\tilde L,M)\big|\le C_2(d+B)+1$
    (the $+1$ stands for the subgroup $\{1\}$).
    But $L$ is a $p$-group, hence $\tilde L=L$, and
    $$
    \big|\Stab(L,M)\big|\le C_2(d+B)+1
    $$
    Since $L$ is normal, all $G$-conjugates of the subgroup $\gen h$ belong to
    $\Stab(L,M)$,
    hence there are at most $C_2(d+B)$ such conjugates.

    On the other hand,
    the number of $G$-conjugates of any subgroup of $L$ is
    either $1$, or divisible by $p$.
    Since $p$ is large, we obtain that $\gen h$ is $G$-invariant, i.e.,
    it is normal in $G$.

    Since $\gen h$ is a cyclic group of order $p$, the size of its
    automorphism group is not divisible by $p$.
    This implies the $G$ acts trivially on $\gen h$.
    In other words, $\gen h\le \CZ(G)$, a contradiction.
    This completes the induction step.
  \end{proof}

  The following is a result of Alperin \cite{alperin1964centralizers}
  (see also \cite[Satz~12.1]{huppert2013endliche}:
  \begin{prop} \label{prop:Alperin-centralizers}
    If $E$ is a subgroup of a $p$-group $G$,
    maximal subject to being normal abelian and of exponent at most $p^n$,
    then any element of order at most $p^n$ which centralizes $E$ lies
    in $E$, unless perhaps $p=2$ and $n=1$.
  \end{prop}

  \begin{claim} \label{claim:for-small-primes}
    If $p\le C_2(d+B)$,
    then each finite $p$-subgroup $G<\Homeo(M)$
    has a characteristic subgroup $H\le G$ of index at most $C_3=C_3(d+B)$, containing $\CZ(G)$, such that
    $$
    \big|\Stab(H,M)\big|\le C_2(d+B).
    $$
  \end{claim}
  \begin{proof}
    By \fref{thm:Mann-Su}, the rank of $G$ is bounded by $r(d,B)$.

    If $p=2$, then we set $n=2$, otherwise we set $n=1$.
    Let $E\le G$ be a subgroup
    maximal subject to being normal abelian and of exponent $p^n$.
    By assumption, $|E|\le C_2(d+B)^{2r(d,B)}$,
    hence the index of the centralizer $\CC_G(E)$ is also bounded
    by some function $C_3(d+B)$.

    The center of $\CC_G(E)$ contains $\CZ(G)$ and $E$.
    By \fref{prop:Alperin-centralizers} we know that
    in $\CC_G(E)$ each element of order $p$ is contained in $E$.
    In particular,
    if $K\le\CC_G(E)$ and $K\ne\{1\}$,
    then $K\cap E\ne\{1\}$.

    Applying
    \fref{claim:intersecting-with-an-abelian-subgroup}
    to the group $\CC_G(E)$
    we obtain a subgroup $\tilde H\le\CC_G(E)$ of bounded index,
    containing $\CZ(G)$ and $E$,
    such that $\big|\Stab(\tilde H,M)\big|$ is bounded.
    Then $|G:\tilde H|=|G:\CC_G(E)|\cdot |\CC_G(E):\tilde H|$ is bounded.

    Finally, let $H\le\tilde H$ be the intersection of all subgroups
    of $G$ containing $\CZ(G)$ whose index is $|G:\tilde H|$.
    It is characteristic,
    by \cite[Corollary 1.1.2]{lubotzky-Segal-SubgroupGrowth}
    it has bounded index, it contains $\CZ(G)$, and
    $\big|\Stab(H,M)\big|\le\big|\Stab(\tilde H,M)\big|$.
    This proves the claim.
  \end{proof}

  \subsection{Completing the proof of
    \fref{thm:for-p-groups,cohomology-manifolds} }
  \label{sec:completing-proof}

  \fref{claim:intersecting-with-an-abelian-subgroup}
  gives us a subgroup $H\le G$ of bounded index, containing $\CZ(G)$,
  such that
  $$
  \Big|\left\{K\in\Stab(H,M)\,\Big|\, K\cap \CZ(G)\ne\{1\}\,\right\}\Big|
  \le C_2(d+B).
  $$
 Since $\CZ(G)\le\CZ(H)$, 
depending on the value of $p$,
  either
  \fref{claim:for-large primes}
  or
  \fref{claim:for-small-primes}
  completes the induction step.
\end{proof}

\begin{proof}[Proof of \fref{thm:for-p-groups,topological-manifolds}]
 As $\dim H_c^*(M;\Orientation)$ is bounded in terms of $H_*(M;\BZ)$ by \fref{lem:universal_bound_on_mod_p_cohomology},
 the statement follows from \fref{thm:for-p-groups,cohomology-manifolds}.
\end{proof}

\section{Proof of \fref{cor:free-part-has-bounded-homology}}

We need some topological information about the fixed point structure
of finite $p$-group actions.

\begin{lem}\label{lem:topological-invariants-of-fixpoint-sets-and-complements}
  Let $M$ be a topological space whose connected components are \cm-s
  such that $B=\dim H_c^*(M;\Orientation)<\infty$.
  Let $H$ be a finite $p$-subgroup acting continuously on $M$.
  For each subset $\CS\subseteq\Stab(H,M)$ and each subgroup $K\le H$,
  we define the following subsets of $M$.
  \[
    F_\CS = \bigcup_{L\in\CS} M^L
    \quad\quad\text{and}\quad\quad
    U^K_\CS= M^K\setminus F_\CS.
  \]
  Then
  \[
    \dim H_c^*(U^K_\CS;\Orientation) \le 2^{|\CS|}B.
  \]

\end{lem}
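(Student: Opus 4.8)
The plan is to prove, by induction on $|\mathcal{S}|$, the following slightly more flexible statement: for every finite family $\mathcal{T}$ of subgroups of $H$ and every subgroup $P\le H$,
\[
  \dim H_c^*\Bigl(M^P\setminus\bigcup_{Q\in\mathcal{T}}M^Q;\Orientation\Bigr)\le 2^{|\mathcal{T}|}B ,
\]
and then to apply it with $P=K$, $\mathcal{T}=\mathcal{S}$ (when $\mathcal{S}$ is infinite there is nothing to prove). Before inducting I would verify that the space above carries an orientation sheaf. Write $M=\bigsqcup_j M_j$ for the decomposition into connected components; since each nonempty component contributes to $H_c^*$ by \fref{lem:H_0_H^d_duality} and $B<\infty$, there are only finitely many. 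The finite $p$-group $P$ permutes the $M_j$, and a point fixed by $P$ lies in a $P$-invariant component, so $M^P=\bigsqcup_{j:\,PM_j=M_j}(M_j)^P$ is a disjoint union of {\cm}'s with $\Orientation[M]\big|_{M^P}\isom\Orientation[M^P]$, by \fref{prop:Smith-Floyd} applied componentwise; removing the finite union of closed sets $\bigcup_{Q\in\mathcal{T}}M^Q$ (the ambient spaces here are locally compact Hausdorff, so fixed-point sets are closed) then leaves an open subset of $M^P$, again a disjoint union of {\cm}'s carrying the restricted orientation sheaf by \fref{prop:Orientation-canonical}.

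For the base case $|\mathcal{T}|=0$ the inequality $\dim H_c^*(M^P;\Orientation)\le\dim H_c^*(M;\Orientation)=B$ is inequality \fref{eq:Smith_Floyd_1} of \fref{prop:Smith-Floyd}, summed over the invariant components. For the inductive step, write $\mathcal{T}=\{Q_1\}\sqcup\mathcal{T}'$ and set $W=M^P\setminus\bigcup_{Q\in\mathcal{T}'}M^Q$, an open subset of $M^P$; the inductive hypothesis for $(P,\mathcal{T}')$ gives $\dim H_c^*(W;\Orientation)\le 2^{|\mathcal{T}'|}B$. The one new input is the set-theoretic identity $M^P\cap M^{Q_1}=M^{\langle P,Q_1\rangle}$, which together with distributivity shows that the closed subset $Z:=W\cap M^{Q_1}$ of $W$ equals $M^{\langle P,Q_1\rangle}\setminus\bigcup_{Q\in\mathcal{T}'}M^Q$; since $\langle P,Q_1\rangle\le H$, the inductive hypothesis for $(\langle P,Q_1\rangle,\mathcal{T}')$ gives $\dim H_c^*(Z;\Orientation)\le 2^{|\mathcal{T}'|}B$. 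Now $M^P\setminus\bigcup_{Q\in\mathcal{T}}M^Q=W\setminus Z$, and feeding the orientation sheaf $\Orientation[W]$ on $W$, the closed set $Z$, and the open complement $W\setminus Z$ into \fref{prop:long-exact-sequence-of-closed-subset} (with trivial group) produces
\[
  \cdots\to H_c^d(W\setminus Z;\Orientation)\to H_c^d(W;\Orientation)\to H_c^d(Z;\Orientation)\to H_c^{d+1}(W\setminus Z;\Orientation)\to\cdots ,
\]
where I use that $\Orientation[W]$ restricts to $\Orientation[W\setminus Z]$ on the open part by \fref{prop:Orientation-canonical}, and to $\Orientation[Z]$ on the closed part because $Z$ is open in $M^{\langle P,Q_1\rangle}$, on which $\Orientation[M^P]$ restricts to the orientation sheaf by \fref{prop:Smith-Floyd}, followed again by \fref{prop:Orientation-canonical}. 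Exactness of this sequence yields $\dim H_c^*(W\setminus Z;\Orientation)\le\dim H_c^*(W;\Orientation)+\dim H_c^*(Z;\Orientation)\le 2\cdot 2^{|\mathcal{T}'|}B=2^{|\mathcal{T}|}B$, closing the induction.

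The step I expect to require the most care is the orientation-sheaf bookkeeping: at each stage one must check that every $H_c^*(-;\Orientation)$ refers to the genuine orientation sheaf of the space under consideration and that the connecting maps in the long exact sequence relate these genuine sheaves — which is exactly what \fref{prop:Orientation-canonical} and \fref{prop:Smith-Floyd} are designed to supply, once \fref{prop:Smith-Floyd} is extended from connected {\cm}'s to disjoint unions by the observation that a group permuting the components can fix points only inside invariant ones. The conceptual point, by contrast, is light: it is the identity $M^P\cap M^Q=M^{\langle P,Q\rangle}$ that lets the auxiliary term $W\cap M^{Q_1}$ be recognized as a fixed-point set of another $p$-subgroup of $H$ with only $|\mathcal{T}'|$ sets removed, so that the recursion closes with a constant depending only on $|\mathcal{S}|$ and $B$, never on $|H|$.
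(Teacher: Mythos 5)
Your proof is correct and follows essentially the same route as the paper: induction on the number of removed fixed-point sets, peeling off one subgroup $Q_1$ at a time, recognizing $W\cap M^{Q_1}$ as $U^{\langle P,Q_1\rangle}_{\mathcal{T}'}$ via $M^P\cap M^{Q_1}=M^{\langle P,Q_1\rangle}$, and applying the long exact sequence of the closed pair. The only differences are presentational — you state the induction for arbitrary finite families of subgroups and spell out the orientation-sheaf bookkeeping, whereas the paper achieves the same recursion by quantifying the lemma over all subgroups $K\le H$ from the start.
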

\begin{proof}
  We argue by induction on the size of $\CS$.
  For $\CS=\emptyset$ the statement holds by \fref{prop:Smith-Floyd}~\fref{eq:Smith_Floyd_1}.
  Assume now that it holds for some $\CS$, and consider the subset
  $\CS'=\CS\cup\{L\}$ for some $L\in\Stab(H,M)\setminus\CS$.
  Then
  \[
    U^K_{\CS'} = U^K_\CS\setminus M^L = U^K_\CS\setminus U^{\gen{K,L}}_\CS.
  \]
  The long exact sequence
  for cohomology with compact support
  corresponding to the closed subset
  $U^{\gen{K,L}}_\CS\subseteq U^K_\CS$
  (see \cite[II-10.3]{bredon2012sheaf}),
  \fref{prop:Smith-Floyd}\fref{eq:O_F},
  and the induction hypothesis imply that
  \begin{align*}
    \dim H_c^*(U^K_{\CS'};\Orientation) &\le
    \dim H_c^*(U^K_{\CS};\Orientation) + \dim H_c^*(U^\gen{K,L}_{\CS};\Orientation) \\ &\le 2^{|\CS|}B + 2^{|\CS|}B = 2^{|\CS'|}B.
  \end{align*}
  This completes the induction step.
\end{proof}

\begin{cor} \label{cor:for-p-groups,cohomology-manifolds}
  Let $M$ be a \dcm such that $B=\dim H_c^*(M;\Orientation)<\infty$
  and $G$ a finite $p$-subgroup acting continuously on $M$.
  Then $G$ has a characteristic subgroup $H\le G$ of index at most
  $\breve C = \breve C(d,B)$ containing the center of $G$
  such that for each $K\in\Stab(H,M)$, the connected components of the
  locally closed subset
  $\mathring M^K=\{x\in M\;|\; H_x=K\}$ are \cm-s and
  \[
    \dim H^c_*(\mathring M^K;\BF_p) =
    \dim H_c^*(\mathring M^K;\Orientation) \le
    \breve C.
  \]
\end{cor}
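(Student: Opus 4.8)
The corollary is essentially a restatement of \fref{thm:for-p-groups,cohomology-manifolds} combined with the homological bounds from \fref{lem:topological-invariants-of-fixpoint-sets-and-complements}, so the strategy is to first apply the former to obtain the subgroup $H$, and then to estimate $\dim H_c^*(\mathring M^K;\Orientation)$ by relating $\mathring M^K$ to a set of the form $U^K_\CS$ for a suitably chosen finite $\CS\subseteq\Stab(H,M)$.

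First I would invoke \fref{thm:for-p-groups,cohomology-manifolds} to get a characteristic subgroup $H_0\le G$ of index at most $\tilde C(d+B)$, containing $\CZ(G)$, with $|\Stab(H_0,M)|\le\tilde C(d+B)$. Next, fix $K\in\Stab(H_0,M)$ and observe that
\[
  \mathring M^K=\{x\in M\mid (H_0)_x=K\}=M^K\setminus\bigcup_{\substack{L\in\Stab(H_0,M)\\ L\supsetneq K}}M^L,
\]
so that $\mathring M^K=U^K_{\CS}$ where $\CS=\{L\in\Stab(H_0,M)\mid L\supsetneq K\}$. Since $|\CS|\le|\Stab(H_0,M)|\le\tilde C(d+B)$, \fref{lem:topological-invariants-of-fixpoint-sets-and-complements} gives
\[
  \dim H_c^*(\mathring M^K;\Orientation)\le 2^{\tilde C(d+B)}B.
\]
The set $\mathring M^K$ is locally closed (it is the intersection of the closed set $M^K$ with the open set $M\setminus F_\CS$), and its connected components are open subsets of connected components of $M^K$, which are \cm-s by \fref{prop:Smith-Floyd}; the orientation sheaf transports correctly by \fref{eq:O_F} in that same proposition, together with \fref{prop:Orientation-canonical} for the open restriction. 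The identification $\dim H^c_*(\mathring M^K;\BF_p)=\dim H_c^*(\mathring M^K;\Orientation)$ is Poincar\'e duality \cite[Theorem~V-9.2]{bredon2012sheaf} applied componentwise, exactly as in \fref{lem:H_0_H^d_duality}.

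The only genuine subtlety is making the subgroup $H$ \emph{characteristic} in $G$ while preserving the bound uniformly over all $K$: the bound on $\dim H_c^*(\mathring M^K;\Orientation)$ produced above depends only on $d$, $B$ and $|\Stab(H_0,M)|$, hence is uniform in $K$, so I would simply set $H=H_0$ (it is already characteristic and contains $\CZ(G)$) and take $\breve C(d,B)=\max\{\tilde C(d+B),\,2^{\tilde C(d+B)}B\}$. If one instead wanted the cleanest statement one could re-run the intersection trick from \fref{claim:for-small-primes} — replace $H_0$ by the intersection of all index-$|G:H_0|$ subgroups containing $\CZ(G)$ — but this is not needed here since $H_0$ already works. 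Thus there is no real obstacle; the main point to get right is the bookkeeping that $|\CS|$ is controlled by $|\Stab(H,M)|$ and that this is the quantity feeding into \fref{lem:topological-invariants-of-fixpoint-sets-and-complements}.
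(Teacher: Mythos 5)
Your proposal is correct and follows essentially the same route as the paper: apply \fref{thm:for-p-groups,cohomology-manifolds} to get $H$, identify $\mathring M^K$ with $U^K_{\CS}$ for $\CS=\{L\in\Stab(H,M)\mid L\supsetneq K\}$, bound its cohomology via \fref{lem:topological-invariants-of-fixpoint-sets-and-complements}, and use \fref{prop:Smith-Floyd} plus Poincar\'e duality for the \cm{} structure and the equality of dimensions. The only (harmless) difference is that the paper replaces the theorem's subgroup by the intersection of all bounded-index subgroups containing $\CZ(G)$ (bounding the resulting index via \fref{thm:Mann-Su} and subgroup-growth estimates), whereas you keep the theorem's subgroup directly, which is already characteristic and suffices.
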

\begin{proof}
  For any subgroup $K\le G$, the set
  $\mathring M^K$ is an open subset of $M^K$,
  hence its connected components are \cm-s by \fref{prop:Smith-Floyd}.
  Then
  $\dim H^c_*(\mathring M^K;\BF_p) =
  \dim H_c^*(\mathring M^K;\Orientation)$
  by the Poincar\'e duality (which is valid with Borel-Moore homology
  with compact support, see \cite[V-9.2]{bredon2012sheaf}).

  With the notation of \fref{thm:for-p-groups,cohomology-manifolds},
  let $\hat C = \max_{d'\le d}\tilde C(d'+B)$,
  and let $H$ be the intersection of all subgroups of $G$ of index at
  most $\hat C$ containing the center of $G$.
  This is a characteristic subgroup of $G$.
  By \fref{thm:Mann-Su}, the rank of $G$ is bounded,
  so the index of $H$ in $G$ is also bounded in terms of $d$ and $B$.

  We apply
  \fref{lem:topological-invariants-of-fixpoint-sets-and-complements}
  to a subgroup $K\in\Stab(H,M)$ and the subset
  $$
  \CS =
  \big\{L\in\Stab(H,M)\;\big|\;L>K\big\}.
  $$
  In this case $U^K_\CS=\mathring M^K$ and
  $|\CS|<\big|\Stab(H,M)\big|\le\hat C$,
  so we obtain a bound on
  $\dim H_c^*(\mathring M^K;\Orientation)$ depending on $d$ and $B$.
\end{proof}

\begin{proof}[Proof of {\fref{cor:free-part-has-bounded-homology}} and
  {\fref{rem:free-part-has-bounded-homology}}]
   By \fref{lem:universal_bound_on_mod_p_cohomology}, $\dim H_c^*(M;\Orientation)$ is bounded in terms of $H_*(M;\BZ)$. Then \fref{cor:for-p-groups,cohomology-manifolds} gives us a subgroup $H$ such that
  $\dim H^c_*(\mathring M^K;\BF_p) =
  \dim H_c^*(\mathring M^K;\Orientation)$
  is bounded  in terms of $\dim(M)$
  and $H_*(M;\BZ)$  for all $K\in\Stab(H,M)$.
  If $K$ is minimal in $\Stab(H,M)$
  then $\mathring M^K$ is an open submanifold in $M$,
  hence its Borel-Moore homology with compact support is isomorphic to
  its singular homology.
\end{proof}

\bibliographystyle{amsplain}
\bibliography{Stabilizers}

\end{document}